 \documentclass[final,3p,times]{elsarticle}

\usepackage{amssymb}
\usepackage{amsmath}
\usepackage{bm}
\usepackage{mathrsfs}
\usepackage{subfigure}
\usepackage{algorithm}%
\usepackage{algorithmicx}%
\usepackage{algpseudocode}%
\usepackage{booktabs}
\usepackage{hyperref}
\usepackage{amsthm}

\newtheorem{theorem}{Theorem}
\newtheorem{lemma}{Lemma}
 \journal{Computer Methods in Applied Mechanics and Engineering}

\begin{document}

\begin{frontmatter}



\title{An adaptive perfectly matched layer finite element method for acoustic-elastic interaction in periodic structures}

\author[1]{Sijia Li}
\author[1,2]{Lei Lin}
\author[1]{Junliang Lv\corref{cor1}}
\ead{lvjl@jlu.edu.cn}

\cortext[cor1]{Corresponding author}

\affiliation[1]{%
	organization={School of Mathematics, Jilin University},
	addressline={Qianjin Street},
	city={Changchun},
	postcode={130012},
	state={Jilin Province},
	country={China}
}

\affiliation[2]{%
	organization={Institute of Computational Mathematics and Scientific/Engineering Computing},
	city={Beijing},
	postcode={100190},
	country={China}
}

\begin{abstract}
This paper considers the scattering of a time-harmonic acoustic plane wave by an elastic body with an unbounded periodic surface. The original problem can be confined to the analysis of the fields in one periodic cell. With the help of the perfectly matched layer (PML) technique, we can truncate the unbounded physical domain into a bounded computational domain. By respectively constructing the equivalent transparent boundary conditions of acoustic and elastic waves simultaneously, the well-posedness and exponential convergence of the solution to the associated truncated PML problem are established. The finite element method is applied to solve the PML problem of acoustic-elastic interaction. To address the singularity caused by the non-smooth surface of the elastic body, we establish a residual-type a posteriori error estimate and develop an adaptive PML finite element algorithm. Several numerical examples are presented to demonstrate the effectiveness of the proposed adaptive algorithm.
\end{abstract}

%

\begin{keyword}
acoustic-elastic interaction \sep perfectly matched layer \sep finite element method \sep adaptive algorithm \sep a posteriori error analysis

\MSC[2020] 65N12 \sep 65N15 \sep 65N30
\end{keyword}

\end{frontmatter}



\section{Introduction}
\label{sec1}
Direct scattering \cite{Bramble107, Hohage03, Jiang22, Kirsch25, LiSi25, Lin26, Lin24} and inverse scattering \cite{Gao25, Huang21, Huang25, LLN25, LLW25} have long been an active area of research in applied mathematics and computational science due to their broad applications in science and engineering. In particular, the wave scattering in periodic structures \cite{Bao21,Niu24,Wang25}, also known as diffraction gratings, plays an essential role in the design and analysis of optical components, such as increasing the efficiency of beam splitters, improving the sensitivity of optical sensors,  and enhancing the performance of antireflective coatings \cite{Bao01, Dobson93}. This paper is concerned with the scattering of the time-harmonic acoustic plane wave by an elastic body with an unbounded periodic surface. The region above the surface is filled with a homogeneous compressible inviscid fluid medium, while the region below is occupied by an isotropic and linearly elastic solid material. Due to the external incident acoustic field, an elastic wave is excited inside the solid, and the incident acoustic wave is scattered back into the fluid. Such a phenomenon is commonly known as the acoustic-elastic interaction in periodic structures \cite{Claeys83, Hu16, Lin25,WangL25}, which has received wide attention due to its significant applications in underwater acoustics and ultrasonic nondestructive evaluation \cite{Akkose, Declercq05}. 

As in many other diffraction problems, the governing equations of acoustic-elastic interaction are formulated in unbounded domains, and the scattered and transmitted fields must satisfy appropriate radiation conditions at infinity \cite{Bao22}. For numerical simulation, it is essential to truncate the unbounded regions above and below the periodic interface and impose appropriate boundary conditions on artificial boundaries. For this purpose, several effective truncation methods have been proposed, such as the perfectly matched layer (PML) technique \cite{Berenger94}, the absorbing boundary conditions (ABCs) \cite{Engquist77}, and the transparent boundary conditions (TBCs) \cite{Grote04, Jiang17}. In this paper, we introduce the PML technique for domain truncation. The fundamental principle of the PML method is to design a finite-thickness absorbing layer with suitably chosen complex coordinate stretching to surround the region of interest. Due to its ease of implementation and high computational efficiency, the PML method has been widely adopted for solving various wave scattering problems, such as acoustic waves \cite{LiY25, Lu23, Lu25}, elastic waves \cite{Bramble10, Chen16, Jiang18}, electromagnetic waves \cite{Bao24,Bao05}, and biharmonic waves \cite{LiIMA25}. However, due to the complexity of multiphysics coupling, a rigorous theoretical analysis of the PML method for the acoustic-elastic interaction in periodic structures is still lacking. 

In practical applications, the grating profiles are often only piecewise smooth and may contain reentrant corners. Such geometric singularities usually lead to reduced regularity of the acoustic and elastic fields near the non-smooth surface, which will severely degrade the computational efficiency of uniform mesh refinement. To address this difficulty, adaptive finite element methods (AFEMs)  provide an efficient numerical strategy, since they are able to concentrate computational effort only in regions where the solution exhibits singular behavior \cite{Babuska78}. Combined with the PML technique, an efficient adaptive finite element method was firstly developed for solving the diffraction grating problem \cite{Chen03}. It was shown that the a posteriori error estimate consists of the finite element discretization error and the PML truncation error, where the latter decays exponentially with respect to the PML parameters, such as the thickness of the layer and the medium properties. Due to its superior numerical behavior, the adaptive finite element PML method was quickly developed to solve a variety of scattering problems, such as obstacle scattering \cite{ Chen08, Chen05, JiangLi17}, periodic diffraction \cite{Bao10, Jiang17-3, Zhou18}, and open cavity problems \cite{ChenY21}. Nevertheless, most of these results have been largely limited to single-physics scattering problems. For the problem of acoustic-elastic interaction in periodic structures, the rigorous a posteriori analysis remains underdeveloped.

Motivated by these considerations, the current paper develops and analyzes an adaptive perfectly matched layer finite element method for the acoustic-elastic interaction in periodic structures.  We firstly derive a truncated PML formulation of acoustic-elastic interaction and establish its well-posedness. Moreover, we provide a rigorous convergence analysis showing that the PML approximation error decays exponentially with respect to the acoustic and elastic PML parameters simultaneously. Based on the truncated PML formulation, we then construct a residual-type a posteriori error estimator that explicitly incorporates the PML truncation error as well as the finite element discretization error. Utilizing the a posteriori error estimate, we propose an adaptive PML-FEM algorithm. Numerical experiments are also presented to demonstrate the effectiveness of the proposed adaptive algorithm, in particular for configurations with geometric singularities.

The remainder of this paper is organized as follows. In Section \ref{SECTION2}, we introduce the mathematical formulation of the acoustic-elastic interaction problem in periodic structures and the associated weak formulation. The truncated PML model and the convergence analysis of the PML approximation are presented in Section \ref{SECTION3}. Section \ref{SECTION4} is devoted to the finite element discretization and the residual-based a posteriori error estimate. In Section \ref{SECTION5}, we describe the implementation of the adaptive algorithm and show numerical results. Conclusions and future extensions are given in Section \ref{SECTION6}.

\section{Problem Formulation}\label{SECTION2}
In this section, we introduce the original model and its corresponding weak formulation of the acoustic-elastic interaction problem in periodic structures.
\subsection{Original model}
Assume that the surface of the elastic solid is periodic in the $x_1$-axis with the period $\Lambda$. Due to the periodic structure of the elastic body, the problem
can be restricted into a single periodic cell, where 
$x_1 \in(0, \Lambda)$.
Denote the surface of the elastic body by
$$
\Gamma=\left\{\boldsymbol{x}=\left(x_1, x_2\right) \in \mathbb{R}^2: x_1 \in (0, \Lambda), x_2=f\left(x_1\right) \right\},
$$
where $f$ is a Lipschitz function with period $\Lambda > 0$. Denote by 
\begin{align*}
	\Omega_\mathrm{u}=\left\{\boldsymbol{x} \in \mathbb{R}^2: x_1 \in (0, \Lambda), x_2> f\left(x_1\right)\right\}
\end{align*}
the domain above the periodic surface, which is filled with a homogeneous compressible inviscid fluid with a constant mass density $\rho_f>0$. The domain below the surface is denoted by
\begin{align*}
	\Omega_\mathrm{d}=\left\{\boldsymbol{x} \in \mathbb{R}^2: x_1 \in (0, \Lambda), x_2<f\left(x_1\right)\right\},
\end{align*}
which is occupied by an isotropic homogeneous elastic solid body characterized by the mass density  $\rho>0$ and the Lam\text{\'e} constants $\lambda, \mu \in \mathbb{R}$ satisfying $\mu>0$ and $\lambda+\mu>0$. Define the artificial boundaries above and below the fluid-solid interface as $\Gamma_+=\left\{\boldsymbol{x} \in \mathbb{R}^2: x_1 \in(0, \Lambda), x_2=h_1 \right\}$ and $\Gamma_-=\left\{\boldsymbol{x} \in \mathbb{R}^2: x_1 \in(0, \Lambda), x_2=h_2 \right\}$, where $h_1$ and $h_2$ are constants. Let
\begin{align*}
	\Omega_+ & = \left\{\boldsymbol{x} \in \mathbb{R}^2: x_1 \in \left(0, \Lambda\right), f\left(x_1\right)<x_2<h_1\right\}, \\
	\Omega_- & = \left\{\boldsymbol{x} \in \mathbb{R}^2:  x_1 \in \left(0, \Lambda\right), h_2<x_2<f\left(x_1\right)\right\}.
\end{align*}
Similarly, we define
\begin{align*}
	\Omega_+^\mathrm{e} &= \left\{\boldsymbol{x} \in \mathbb{R}^2: x_1 \in(0, \Lambda),  x_2>h_1 \right\}, \\
	\Omega_-^\mathrm{e} &= \left\{\boldsymbol{x} \in \mathbb{R}^2: x_1 \in(0, \Lambda), x_2<h_2 \right\}.
\end{align*}
Fig. \ref{model1} shows the problem geometry of acoustic-elastic interaction in periodic structures. Let $\boldsymbol{n}=\left(n_1, n_2\right)^{\top}$ be the unit normal vector to $\Gamma$  directed
into $\Omega_\mathrm{u}$, and denote 
$\Omega =\Omega_+ \cup \Omega_-.$
\begin{figure}[htbp]
	\centerline{\includegraphics[width=0.4\textwidth]{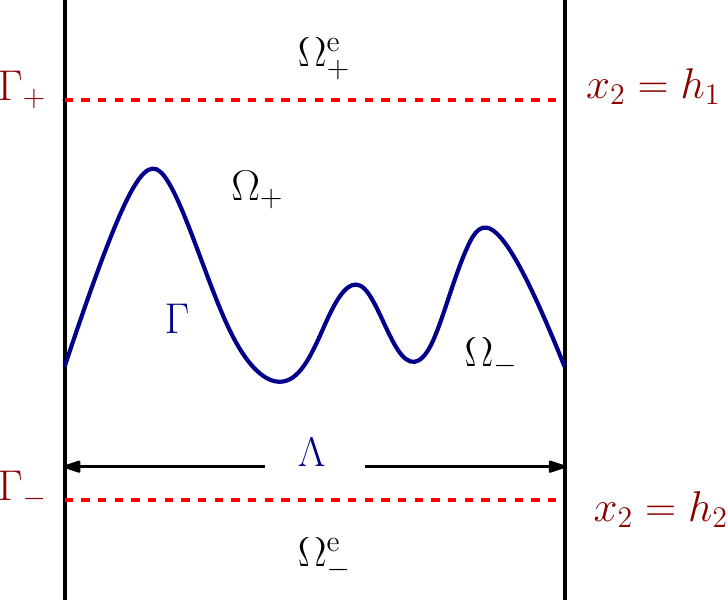}}
	\caption{Problem geometry of acoustic-elastic interaction in periodic structures.\label{model1}}
\end{figure}

Let $ p^\mathrm{in}=e^{\mathrm{i}\left(\alpha x_1-\beta x_2\right)}$ be the incoming acoustic wave, where $\alpha=\kappa \sin \theta, \beta=\kappa \cos \theta$, and $\theta \in(-\pi / 2, \pi / 2)$ is the incident angle. When the plane wave incident onto the elastic surface, the scattered acoustic wave $p^\mathrm{sc}$ in the fluid domain and the transmitted elastic wave $\boldsymbol{u}=\left(u_1, u_2\right)^{\top}$ in the solid domain are excited at the same time, where $p^\mathrm{sc}$ satisfies the Helmholtz equation
\begin{align*}
	\Delta p^\mathrm{sc}+\kappa^2 p^\mathrm{sc}=0 \quad &\text { in } \Omega_\mathrm{u},
\end{align*}
while $\boldsymbol{u}$ admits the Naiver equation
\begin{align}\label{navier}
	\Delta^* \boldsymbol{u} + \omega^2 \rho \boldsymbol{u}=0 \quad \text { in } \Omega_\mathrm{d}.
\end{align}
Here, $\kappa$ is the wavenumber, $\omega>0$ is the angular frequency, and $\Delta^* :=\mu \Delta + (\lambda+\mu) \nabla \nabla \cdot$. To ensure the continuity of the normal component of the velocity, the kinematic interface condition
\begin{align*}
	\partial_{\boldsymbol{n}} (p^\mathrm{in}+p^\mathrm{sc}) =\rho_f \omega^2 \boldsymbol{u} \cdot \boldsymbol{n} \quad \text { on } \Gamma
\end{align*}
is required. In addition, the following dynamic interface condition
\begin{align*}
	-(p^\mathrm{in}+p^\mathrm{sc}) \boldsymbol{n} =\boldsymbol{T} \boldsymbol{u} \quad \text { on } \Gamma
\end{align*}
is imposed to ensure the continuity of the traction (cf. \cite{Lin23, Xu21, Yin17}). Here, the traction of $\boldsymbol{u}$ is defined by 
\begin{align}\label{utrac}
	\boldsymbol{T}\boldsymbol{u}:=2 \mu \partial_{\boldsymbol{\nu}} \boldsymbol{u}+\lambda \boldsymbol{\nu} \nabla \cdot \boldsymbol{u}
	-\mu \left[\begin{array}{l}\left(\frac{\partial u_1}{\partial x_2}-\frac{\partial u_2}{\partial x_1}\right)\nu_2 \\ \left(\frac{\partial u_2}{\partial x_1}-\frac{\partial u_1}{\partial x_2}\right)\nu_1\end{array}\right],
\end{align}
where $\boldsymbol{\nu}=\left(\nu_1, \nu_2\right)^{\top}$ is the unit normal vector of $\Omega_-$. We want to reduce the original problem equivalently to a boundary value problem in $\Omega$. To this end, we introduce exact transparent boundary conditions for acoustic and elastic waves defined on $\Gamma_+$ and $\Gamma_-$, respectively.

Let $H^1(\Omega_+)$ and $H^1(\Omega_-)$ be the standard Sobolev spaces. The corresponding  quasi-periodic functional spaces are defined by
\begin{align*}
	H_{\mathrm{qp}}^1\left(\Omega_+\right):&=\left\{p \in H^1\left(\Omega_+\right): p\left(\Lambda, x_2\right)=p\left(0, x_2\right) e^{\mathrm{i} \alpha \Lambda}\right\},\\
	H_{\mathrm{qp}}^1\left(\Omega_-\right):&=\left\{u \in H^1\left(\Omega_-\right): u\left(\Lambda, x_2\right)=u\left(0, x_2\right) e^{\mathrm{i} \alpha \Lambda}\right\}.
\end{align*}
Given any quasi-periodic function $p \in H_{\mathrm{qp}}^1\left(\Omega_+\right)$, it admits a Fourier series expansion 
\begin{align*}
	p\left(x_1, x_2\right)=\sum_{n \in \mathbb{Z}} p_n(x_2) e^{\mathrm{i} \alpha_n x_1}, \quad p_n(x_2)=\frac{1}{\Lambda} \int_0^{\Lambda} p\left(x_1, x_2\right) e^{-\mathrm{i} \alpha_n x_1} \mathrm{d} x_1, 
\end{align*}
where $\alpha_n=2 n \pi / \Lambda + \alpha$. For any $s \in \mathbb{R}$, the trace functional space $H^s\left(\Gamma_+\right)$ is defined by
\begin{align*}
	H^s\left(\Gamma_+\right)=\left\{p \in L^2\left(\Gamma_+\right):\|p\|_{H^s\left(\Gamma_+\right)}<\infty\right\}
\end{align*}
with the trace norm 
\begin{align*}
	\|p\|_{H^s\left(\Gamma_+\right)}^2=\Lambda \sum_{n \in \mathbb{Z}}\left(1+\alpha_n^2\right)^s\left|p_n(h_1)\right|^2.
\end{align*}
Similarly, we can define the  trace functional space $H^s\left(\Gamma_-\right)$ and the corresponding trace norm $\|\cdot\|_{H^s\left(\Gamma_-\right)}$.
We firstly introduce the TBC of the scattered field $p^\mathrm{sc}$. Note that the Rayleigh expansion (cf. \cite{Wang15}) of $p^\mathrm{sc}$ can be defined by 
\begin{align}\label{ps rexp}
	p^\mathrm{sc}\left(x_1, x_2\right)=\sum_{n \in \mathbb{Z}} p_n^\mathrm{sc}\left(h_1\right) e^{\mathrm{i}\left(\alpha_n x_1+\beta_n\left(x_2-h_1\right)\right)}, \quad x_2>h_1,
\end{align}
where 
\begin{align*}
	\beta_n= \begin{cases}\left(\kappa^2-\alpha_n^2\right)^{1 / 2}, & \left|\alpha_n\right|<\kappa, \\ \mathrm{i}\left(\alpha_n^2-\kappa^2\right)^{1 / 2}, & \left|\alpha_n\right|>\kappa. \end{cases}
\end{align*}
We always assume that $\beta_n \neq 0$, i.e. $\left|\alpha_n\right| \neq \kappa$ to exclude Wood’s anomalies \cite{Bao22}.  Taking the normal derivative of (\ref{ps rexp}) with respect to $x_2$ and then evaluating it at $x_2=h_1$, one can get
\begin{equation*}
	\partial_{x_2} p^\mathrm{sc}(x_1,h_1)=\sum_{n \in \mathbb{Z}} \mathrm{i} \beta_n p^s_n(h_1) e^{\mathrm{i} \alpha_n {x_1}}.
\end{equation*}
It was shown in \cite{Wang15} that for any quasi-periodic function $p$, the Dirichlet-to-Neumann (DtN) operator on $\Gamma_+$ is defined by
\begin{align}\label{dw}
	\mathscr{T}_+ p\left(x_1, h_1\right):=\sum_{n \in \mathbb{Z}} \mathrm{i} \beta_n p_n(h_1) e^{\mathrm{i} \alpha_n x_1}.
\end{align}
Based on (\ref{dw}), the TBC for the scattered field $p^\mathrm{sc}$ can be defined by
\begin{align}\label{pdtn}
	\partial_{x_2} p^\mathrm{sc}=\mathscr{T}_+ p^\mathrm{sc} \quad \text { on }  \Gamma_+.
\end{align}
Next, we turn to derive the TBC for the transmitted field $\boldsymbol{u}$. Let the Jacobian matrix of $\boldsymbol{u}$ be
\begin{align*}
	\nabla \boldsymbol{u}=\left[\begin{array}{ll}
		\partial_{x_1} u_1 & \partial_{x_2} u_1 \\
		\partial_{x_1} u_2 & \partial_{x_2} u_2
	\end{array}\right].
\end{align*}
The operators $\operatorname{curl}$ and $\textbf{curl}$ are defined respectively by
\begin{align*}
	\operatorname{curl} \boldsymbol{u}=\partial_{x_1} u_2-\partial_{x_2} u_1, \quad \textbf{curl} u=\left[\partial_{x_2} u,-\partial_{x_1} u\right]^{\top}.
\end{align*}
The transmitted field $\bm{u}$ in $\Omega_-$ can be decomposed into the compressional  part $\bm{u}_\mathrm{p}$ and the shear part $\bm{u}_\mathrm{s}$, i.e.,
\begin{equation*}
	\bm{u}_\mathrm{p}=-\frac{1}{\kappa_{1}^2} \nabla \nabla \cdot \bm{u}, \quad \bm{u}_\mathrm{s}=\frac{1}{\kappa_{2}^2}\textbf{curl}\operatorname{curl} \bm{u},
\end{equation*}
where
\begin{align*}
	\kappa_1=\omega \sqrt{\frac{\rho}{2 \mu+\lambda}}, \quad \kappa_2=\omega \sqrt{\frac{\rho}{\mu}}
\end{align*}
are called the compressional and shear wavenumbers satisfying $\kappa_2>\kappa_1$. For any solution $\boldsymbol{u}$ of (\ref{navier}), it satisfies the Helmholtz decomposition
\begin{align} \label{hel decom}
	\boldsymbol{u}=\nabla \phi_1+\textbf{curl} \phi_2,
\end{align}
where $\phi_j$ ($j=1,2$) is called the scalar potential function.
Combining (\ref{navier}) and (\ref{hel decom}) yields 
\begin{align*}
	\Delta \phi_j+\kappa_j^2 \phi_j=0.
\end{align*}
Due to  the quasi-periodicity of $\boldsymbol{u}$, it follows from (\ref{hel decom}) that $\phi_j$ is also a quasi-periodic function in the $x_1$ direction with the period $\Lambda$ and admits the following Rayleigh expansion
\begin{align*}
	\phi_j\left(x_1, x_2\right)=\sum_{n \in \mathbb{Z}} \phi_n^{(j)}(h_2) e^{\mathrm{i}\left(\alpha_n x_1-\beta_n^{(j)}\left(x_2-h_2\right)\right)}, \quad x_2<h_2,
\end{align*}
where
\begin{align*}
	\beta_n^{(j)}= \begin{cases}\left(\kappa_j^2-\alpha_n^2\right)^{1 / 2}, & \left|\alpha_n\right|<\kappa_j, \\ \mathrm{i}\left(\alpha_n^2-\kappa_j^2\right)^{1 / 2}, & \left|\alpha_n\right|>\kappa_j.\end{cases}
\end{align*}
Similarly, we assume that $\kappa_j \neq\left|\alpha_n\right|$. 
It has been shown in \cite{Lin25} that the transmitted field $\boldsymbol{u}$ has the expansion
\begin{align}\label{urexp}
	\boldsymbol{u}\left(x_1, x_2\right)= & \sum_{n \in \mathbb{Z}} \frac{1}{\mathcal{X}_{n}}\left[\begin{array}{cc}
		\alpha_n^2 & -\alpha_n \beta_n^{(2)} \\
		-\alpha_n \beta_n^{(1)} & \beta_n^{(1)} \beta_n^{(2)}
	\end{array}\right] \boldsymbol{u}_n(h_2) \mathrm{e}^{\mathrm{i}\left(\alpha_n x_1-\beta_n^{(1)}\left(x_2-h_2\right)\right)} \notag\\
	& \quad +\frac{1}{\mathcal{X}_{n}}\left[\begin{array}{cc}
		\beta_n^{(1)} \beta_n^{(2)} & \alpha_n \beta_n^{(2)} \\
		\alpha_n \beta_n^{(1)} & \alpha_n^2
	\end{array}\right] \boldsymbol{u}_n(h_2) \mathrm{e}^{\mathrm{i}\left(\alpha_n x_1-\beta_n^{(2)}\left(x_2-h_2\right)\right)}, \quad x_2<h_2,
\end{align}
where $\mathcal{X}_{n} = \alpha_n^2 + \beta_n^{(1)}\beta_n^{(2)}$. 
Based on (\ref{utrac}), we can define the boundary operator on $\Gamma_-$ as
\begin{align}\label{uboun oper}
	\mathscr{T}_- \boldsymbol{u}:&= -2 \mu \partial_{x_2}\boldsymbol{u}+\lambda [0,-1]^{\top}\nabla \cdot \boldsymbol{u}+\mu \left[\begin{array}{c}\frac{\partial u_1}{\partial x_2}-\frac{\partial u_2}{\partial x_1} \\ 0\end{array}\right] \notag\\
	&=\left[\begin{array}{c}-\mu\left(\frac{\partial u_1}{\partial x_2}+\frac{\partial u_2}{\partial x_1}\right) \\ -(2 \mu+\lambda) \frac{\partial u_2}{\partial x_2}-\lambda \frac{\partial u_1}{\partial x_1}\end{array}\right].
\end{align}
From (\ref{urexp}) and (\ref{uboun oper}),  the TBC of the transmitted field $\boldsymbol{u}$ is defined by
\begin{align}\label{udtn}
	\mathscr{B} \boldsymbol{u}=\mathscr{T}_- \boldsymbol{u}=\sum_{n \in \mathbb{Z}} W_n\left[u_n^{(1)}(h_2), u_n^{(2)}(h_2)\right]^{\top} e^{\mathrm{i} \alpha_n x_1},
\end{align}
where $\mathscr{T}_-$ is the elastic DtN operator and the coefficient matrix is defined by
\begin{align*}
	W_n=\frac{\mathrm{i}}{\mathcal{X}_n}\left[\begin{array}{cc}
		\omega^2 \rho \beta_n^{(1)} & -2 \mu \alpha_n \mathcal{X}_n+\omega^2 \rho \alpha_n \\
		2 \mu \alpha_n \mathcal{X}_n-\omega^2 \rho \alpha_n & \omega^2 \rho \beta_n^{(2)}
	\end{array}\right].
\end{align*}
Then by the above transparent boundary conditions, one can obtain an acoustic-elastic interaction  boundary value problem in periodic structures: Given $p^\mathrm{in}$, seek quasi-periodic solutions $p^\mathrm{sc}$ and $\boldsymbol{u}$ such that
\begin{align}\label{DtNproblem}
	\Delta p^\mathrm{sc}+\kappa^2 p^\mathrm{sc}=0 & \quad\text { in } \Omega_+, \notag\\
	\Delta^* \boldsymbol{u} + \omega^2\rho \boldsymbol{u}=0 & \quad\text { in } \Omega_-, \notag \\
	\partial_{\boldsymbol{n}} (p^\mathrm{in}+p^\mathrm{sc})=\rho_f \omega^2 \boldsymbol{u} \cdot \boldsymbol{n} & \quad\text { on } \Gamma, \notag \\
	-(p^\mathrm{in}+p^\mathrm{sc}) \boldsymbol{n}=\boldsymbol{T} \boldsymbol{u} & \quad\text { on } \Gamma, \\
	\partial_{x_2} p^\mathrm{sc}=\mathscr{T}_+ p^\mathrm{sc} & \quad\text { on } \Gamma_+, \notag\\
	\mathscr{B} \boldsymbol{u}=\mathscr{T}_- \boldsymbol{u} &\quad \text { on } \Gamma_-.\notag
\end{align}
As shown in \cite{Hu16}, we know that the uniqueness of the solution for a bounded elastic body does not hold at Jones frequencies. Therefore, throughout the paper, we assume that the frequency $\omega$ is not a Jones frequency.

\subsection{Variational problem}
In this subsection, we give the variational formulation of the acoustic-elastic interaction in periodic structures. To facilitate subsequent theoretical analysis, we introduce the following product Sobolev space
\begin{align*}
	\mathscr{H}^1_{\mathrm{qp}}(\Omega):= H_{\mathrm{qp}}^1\left(\Omega_+\right) \times H_{\mathrm{qp}}^1\left(\Omega_-\right)^2=\left\{\boldsymbol{U}=(p, \boldsymbol{u}): p \in H_{\mathrm{qp}}^1\left(\Omega_+\right), \boldsymbol{u} \in H_{\mathrm{qp}}^1\left(\Omega_-\right)^2\right\}.
\end{align*}
The corresponding norm is defined by
\begin{align*}
	\|\boldsymbol{V}\|_{\mathscr{H}^1_\mathrm{qp}(\Omega)}^2 = & (\boldsymbol{V}, \boldsymbol{V})_{\mathscr{H}^1_{\mathrm{qp}}(\Omega)}\\
	= &  \int_{\Omega_+}(\nabla \varphi \cdot \nabla \overline{\varphi} + \varphi \overline{\varphi}) \mathrm{~d} \boldsymbol{x} + \int_{\Omega_-}\left(\mathcal{E}_{\lambda, \mu}(\boldsymbol{\psi}, \boldsymbol{\psi}) + \boldsymbol{\psi} \cdot \overline{\boldsymbol{\psi}}\right) \mathrm{d} \boldsymbol{x}, \quad \forall~\boldsymbol{V} = \left(\varphi, \boldsymbol{\psi}\right) \notag,
\end{align*}
where 
\begin{align*}
	\mathcal{E}_{\lambda, \mu}(\boldsymbol{u}, \boldsymbol{\psi}) 
	& = \lambda(\nabla \cdot \boldsymbol{u})(\nabla \cdot \overline{\boldsymbol{\psi}})+\frac{\mu}{2}\left(\nabla \boldsymbol{u}+\nabla \boldsymbol{u}^{\top}\right):\left(\nabla \overline{\boldsymbol{\psi}}+\nabla \overline{\boldsymbol{\psi}}^{\top}\right)\\
	&=  (2 \mu + \lambda)\left(\frac{\partial u_1}{\partial x_1} \frac{\partial \overline{\psi}_1}{\partial x_1}+\frac{\partial u_2}{\partial x_2} \frac{\partial \overline{\psi}_2}{\partial x_2}\right)+\mu\left(\frac{\partial u_1}{\partial x_2} \frac{\partial \overline{\psi}_1}{\partial x_2}+\frac{\partial u_2}{\partial x_1} \frac{\partial \overline{\psi}_2}{\partial x_1}\right) \\
	&\qquad +\lambda\left(\frac{\partial u_2}{\partial x_2} \frac{\partial \overline{\psi}_1}{\partial x_1}+\frac{\partial u_1}{\partial x_1} \frac{\partial \overline{\psi}_2}{\partial x_2}\right)+\mu\left(\frac{\partial u_2}{\partial x_1} \frac{\partial \overline{\psi}_1}{\partial x_2}+\frac{\partial u_1}{\partial x_2} \frac{\partial \overline{\psi}_2}{\partial x_1}\right).
\end{align*}
Note that $A: B=\operatorname{tr}\left(A B^{\top}\right)$ is the Frobenius inner product of 2$\times$2 matrices $A$ and $B$. Besides, from the second Korn's inequality, it can be easily verified that $\|\cdot\|_{\mathscr{H}^1_\mathrm{qp}(\Omega)}$ is equivalent to the standard $H^1$-norm; see also \cite{Xu21}. 

By the Green's formula and the first Betti's formula, we can derive the weak formulation of the original problem (\ref{DtNproblem}): Given $p^\mathrm{in}$, seek $\boldsymbol{U}=\left(p^\mathrm{sc}, \boldsymbol{u}\right) \in \mathscr{H}^1_\mathrm{qp}(\Omega)$ such that
\begin{align}\label{DtN varproblem}
	A(\boldsymbol{U}, \boldsymbol{V})=L(\boldsymbol{V}),\quad \forall~ \boldsymbol{V}=(\varphi, \boldsymbol{\psi}) \in \mathscr{H}^1_\mathrm{qp}(\Omega).
\end{align}
Here, the sesquilinear form $A: \mathscr{H}^1_\mathrm{qp}(\Omega) \times \mathscr{H}^1_\mathrm{qp}(\Omega) \rightarrow \mathbb{C}$ is  defined by 
\begin{align}\label{DtN varleft}
	A(\boldsymbol{U}, \boldsymbol{V})
	=\widetilde{A}(\boldsymbol{U}, \boldsymbol{V})
	+ \widetilde{B}_1(\boldsymbol{U}, \boldsymbol{V})
	+ \widetilde{B}_2(\boldsymbol{U}, \boldsymbol{V}),
\end{align}
where
\begin{align*}
	& \widetilde{A}(\boldsymbol{U}, \boldsymbol{V})=\int_{\Omega_+}\left(\nabla p^\mathrm{sc} \cdot \nabla \overline{\varphi}-\kappa^2 p^\mathrm{sc} \overline{\varphi}\right) \mathrm{d} \boldsymbol{x}
	+ \int_{\Omega_-}\left(\mathcal{E}_{\lambda,    \mu}(\boldsymbol{u}, \boldsymbol{\psi})- \omega^2\rho \boldsymbol{u} \cdot \overline{\boldsymbol{\psi}}\right) \mathrm{d} \boldsymbol{x},\\
	& \widetilde{B}_1(\boldsymbol{U}, \boldsymbol{V})
	= \int_{\Gamma} p^\mathrm{sc} \boldsymbol{n} \cdot \overline{\boldsymbol{\psi}} \mathrm{~d}s
	+ \int_{\Gamma} \rho_f \omega^2 \boldsymbol{u} \cdot \boldsymbol{n} \overline{\varphi} \mathrm{~d}s, \\
	& \widetilde{B}_2(\boldsymbol{U}, \boldsymbol{V})=-\int_{\Gamma_+} \mathscr{T}_+ p^\mathrm{sc} \overline{\varphi} \mathrm{~d}s
	- \int_{\Gamma_-} \mathscr{T}_- \boldsymbol{u} \cdot \overline{\boldsymbol{\psi}} \mathrm{~d}s.
\end{align*}
The term $L$ on the right-hand side of equation (\ref{DtN varproblem}) is a bounded linear functional dependent on  $\left(\partial_{\bm{n}} p^\mathrm{in}, p^\mathrm{in} \right) \in H^{-1 / 2}(\Gamma) \times H^{1 / 2}(\Gamma)$, which is defined by
\begin{align}
	L(\boldsymbol{V})=\int_{\Gamma} \partial_{\boldsymbol{n}} p^\mathrm{in} \overline{\varphi}-p^\mathrm{in} \boldsymbol{n} \cdot \overline{\boldsymbol{\psi}} \mathrm{~d}s.
\end{align}
We get from Lemma 4.1 in \cite{Hu16} that the sesquilinear form $A(\cdot, \cdot)$ satisfies  G\aa{}rding's inequality. Besides, it has also been shown in \cite{Hu16} that the variational problem (\ref{DtN varproblem}) has the unique solvability with Jones frequencies excluded. Then, it follows from the idea of  \cite{Hsiao04} that the following inf-sup condition holds
\begin{align}\label{dtn inf-sup}
	\sup _{0 \neq \boldsymbol{V} \in \mathscr{H}^1_\mathrm{qp}(\Omega)} \frac{|A(\boldsymbol{U}, \boldsymbol{V})|}{\|\boldsymbol{V}\|_{\mathscr{H}^1_\mathrm{qp}(\Omega)}} \geq \gamma_0\|\boldsymbol{U}\|_{\mathscr{H}^1_\mathrm{qp}(\Omega)}, \quad \forall~ \boldsymbol{U} \in \mathscr{H}^1_\mathrm{qp}(\Omega),
\end{align}
where $\gamma_0>0$ is a constant indepedent of $U$.

\section{PML Approximation}\label{SECTION3}
This section firstly focuses on constructing a truncated PML problem as an approximation to the original acoustic-elastic interaction in periodic structures, and then discusses the exponential convergence of the PML approximation error.
\subsection{PML setting}
We begin by introducing the absorbing PML media. Let the regions above  $\Gamma_{+}$ and below $\Gamma_{-}$ be filled with PML layers of thicknesses $\delta_1$ and $\delta_2$, respectively. The acoustic and elastic PML domains are respectively denoted by
\begin{align*}
	\Omega^{\mathrm{PML}}_+=\left\{\boldsymbol{x} \in \mathbb{R}^2: 0<x_1<\Lambda, h_1<x_2<h_1+\delta_1\right\},\\
	\Omega^{\mathrm{PML}}_-=\left\{\boldsymbol{x} \in \mathbb{R}^2: 0<x_1<\Lambda, h_2-\delta_2<x_2<h_2\right\},
\end{align*}
with the corresponding acoustic and elastic PML boundaries
\begin{align*}
	& \Gamma_+^{\mathrm{PML}}=\left\{x \in \mathbb{R}^2: 0<x_1<\Lambda, x_2=h_1+\delta_1\right\}, \\ 
	& \Gamma_-^{\mathrm{PML}}=\left\{x \in \mathbb{R}^2: 0<x_1<\Lambda, x_2=h_2-\delta_2\right\}.
\end{align*}

\begin{figure}[htbp]
	\centerline{\includegraphics[width=0.5\textwidth]{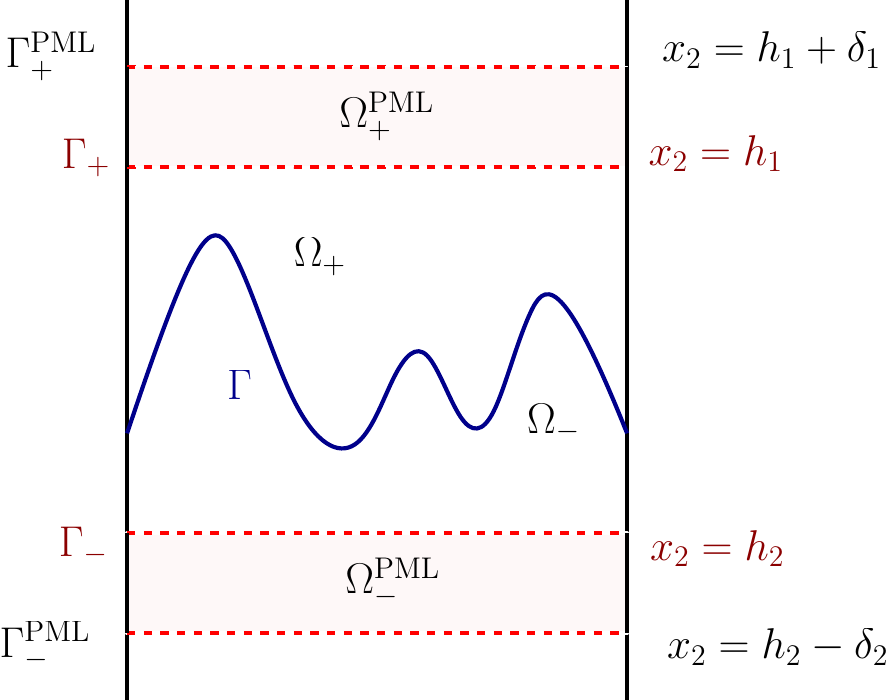}}
	\caption{Problem geometry of acoustic-elastic interaction in periodic structures with PML layers.\label{model2}}
\end{figure}

Fig. \ref{model2} shows the problem geometry of acoustic-elastic interaction in periodic structures with PML layers. Let $s(\tau)=s_1(\tau)+\mathrm{i} s_2(\tau)$ be the continuous PML function satisfying
\begin{align*}
	& s(\tau)=1 \quad \text{for} \quad h_2<\tau<h_1, \\
	& s_1(\tau) \geq 1, \quad s_2(\tau)>0 \quad \text{otherwise}.
\end{align*}
Then the PML can be introduced by the following complex coordinate stretching
\begin{align*}
	\hat{x}_2=\int_0^{x_2} s(\tau) \mathrm{~d} \tau.
\end{align*}
Obviously, we have $\nabla_{\hat{\boldsymbol{x}}}=\left[\partial_{x_1}, \partial_{\hat{x}_2}\right]^{\top}=\left[\partial_{x_1}, s^{-1} \partial_{x_2}\right]^{\top}$, where $\hat{\boldsymbol{x}}=(x_1, \hat{x}_2)^{\top}$. In addition, we can obtain
\begin{align*}
	\Delta_{\hat{\boldsymbol{x}}} p^\mathrm{sc}(\hat{\boldsymbol{x}}) + \kappa^2 p^\mathrm{sc}(\hat{\boldsymbol{x}})=0 &\quad \text { in } \Omega_\mathrm{u},\\
	\Delta^*_{\hat{\boldsymbol{x}}}\boldsymbol{u}(\hat{\boldsymbol{x}}) +  \omega^2\rho \boldsymbol{u}(\hat{\boldsymbol{x}}) = 0 &\quad \text { in } \Omega_\mathrm{d},
\end{align*}
where
\begin{align*}
	\Delta_{\hat{\boldsymbol{x}}} = \nabla_{\hat{\boldsymbol{x}}} \cdot \nabla_{\hat{\boldsymbol{x}}},\quad
	\Delta^*_{\hat{\boldsymbol{x}}} = \mu \Delta_{\hat{\boldsymbol{x}}}+(\lambda+\mu) \nabla_{\hat{\boldsymbol{x}}} \nabla_{\hat{\boldsymbol{x}}} \cdot.
\end{align*}
Denote by
\begin{align}\label{new flied}
	p^\mathrm{sc}(\hat{\boldsymbol{x}})=p^\mathrm{sc}(\hat{\boldsymbol{x}}(\boldsymbol{x})):=\hat{p}^\mathrm{sc}(\boldsymbol{x})\quad 
	\text{and}\quad 
	\boldsymbol{u}(\hat{\boldsymbol{x}})=\boldsymbol{u}(\hat{\boldsymbol{x}}(\boldsymbol{x})):=\hat{\boldsymbol{u}}(\boldsymbol{x}).
\end{align}  
It is obvious that $\hat{p}^\mathrm{sc}(\boldsymbol{x})=  p^\mathrm{sc}(\boldsymbol{x}) \text { in } \Omega_+$ and $\hat{\boldsymbol{u}}(\boldsymbol{x})=\boldsymbol{u}(\boldsymbol{x}) \text { in } \Omega_-$ since $\hat{\boldsymbol{x}}=\boldsymbol{x}$ in $x_2 \in (h_2, h_1)$.
Denote the respective acoustic and elastic PML equations by
\begin{align*} 
	\mathcal{L}_1\hat{p}^\mathrm{sc}=0 & \quad \text { in } \Omega_+^{\mathrm{PML}}, \\ \mathcal{L}_2 \hat{\boldsymbol{u}} =0 & \quad \text { in } \Omega_-^{\mathrm{PML}}.
\end{align*}
Here, the first operator $\mathcal{L}_1$ is defined by
\begin{align*}
	\mathcal{L}_1 p:=&\nabla \cdot(\mathbb{A} \nabla p) + \kappa^2 s\left(x_2\right) p,
\end{align*}
where
$$
\mathbb{A}(x)=\left[\begin{array}{cc}
	\mathbb{A}_{11} & 0 \\
	0 & \mathbb{A}_{22}
\end{array}\right]=\left[\begin{array}{cc}
	s\left(x_2\right) & 0 \\
	0 & s^{-1}(x_2)
\end{array}\right].
$$
The second operator $\mathcal{L}_2$ is defined by
\begin{align*}
	\mathcal{L}_2 \boldsymbol{u}:&=\left[\begin{array}{l}
		(2 \mu + \lambda) \frac{\partial }{\partial x_1}\left(s(x_2) \frac{\partial u_1}{\partial x_1}\right)
		+\mu \frac{\partial }{\partial x_2}\left(s^{-1}(x_2) \frac{\partial u_1}{\partial x_2}\right)
		+ (\lambda + \mu) \frac{\partial^2 u_2}{\partial x_1 \partial x_2}+\omega^2 \rho s(x_2) u_1 \\
		(2 \mu + \lambda) \frac{\partial }{\partial x_2}\left(s^{-1}(x_2) \frac{\partial u_2}{\partial x_2}\right)
		+ \mu  \frac{\partial }{\partial x_1}\left(s(x_2) \frac{\partial u_2}{\partial x_1}\right)
		+ (\lambda+\mu) \frac{\partial^2 u_1}{\partial x_1 \partial x_2} +\omega^2 \rho s(x_2) u_2
	\end{array}\right].
\end{align*}
From equations (\ref{ps rexp}), (\ref{urexp}) and (\ref{new flied}),  we can conclude that $\hat{p}^\mathrm{sc}$ and $\hat{\boldsymbol{u}}$ exhibit exponential decay properties in $\Omega^\mathrm{e}_+$ and $\Omega^\mathrm{e}_-$ as $x_2 \rightarrow \infty$ and $x_2 \rightarrow-\infty$, respectively. Thus, it is reasonable to impose the homogeneous Dirichlet boundary conditions on boundaries $\Gamma_+^{\mathrm{PML}}$ and $\Gamma_-^{\mathrm{PML}}$ for the PML problem. Let  $\mathrm{D}_+=\Omega_+ \cup \Omega_+^{\mathrm{PML}} \cup \Gamma_+$ and $\mathrm{D}_-=\Omega_- \cup \Omega_-^{\mathrm{PML}} \cup \Gamma_-$.
Then the truncated PML problem reads as: Given $p^\mathrm{in}$, seek the quasi-periodic functions $\hat{p}^\mathrm{sc}$ and $\hat{\boldsymbol{u}}$ such that 
\begin{align}\label{trun PMLproblem}
	\mathcal{L}_1\hat{p}^\mathrm{sc} =0  &\quad \text { in } \mathrm{D}_+, \notag\\
	\mathcal{L}_2 \hat{\boldsymbol{u}} =0 & \quad\text { in } \mathrm{D}_-, \notag\\ 
	\hat{p}^\mathrm{sc} = 0 &\quad \text { on } \Gamma_+^{\mathrm{PML}},\\ \hat{\boldsymbol{u}} =0 & \quad \text { on }  \Gamma_-^{\mathrm{PML}}, \notag\\ 
	\partial_{\boldsymbol{n}} (p^\mathrm{in}+\hat{p}^\mathrm{sc}) =\rho_f \omega^2 \hat{\boldsymbol{u}} \cdot \boldsymbol{n} &\quad \text { on } \Gamma,\notag\\
	-(p^\mathrm{in}+\hat{p}^\mathrm{sc}) \boldsymbol{n} = \boldsymbol{T} \hat{\boldsymbol{u}} &\quad \text { on } \Gamma. \notag
\end{align}
For convenience in defining the variational problem with respect to (\ref{trun PMLproblem}), we introduce the following Sobolev spaces
\begin{align*}
	&H_{0, \mathrm{qp}}^1\left(\mathrm{D}_+\right)=\left\{\varphi \in H_{\mathrm{qp}}^1\left(\mathrm{D}_+\right): \varphi=0\right. \left.\text { on } \Gamma_+^{\mathrm{PML}} \right\}, \\
	&H_{0, \mathrm{qp}}^1\left(\mathrm{D}_-\right)^2=\left\{\boldsymbol{\psi} \in H_{\mathrm{qp}}^1\left(\mathrm{D}_-\right)^2: \boldsymbol{\psi}=0\right.  \left.\text { on } \Gamma_-^{\mathrm{PML}} \right\}.
\end{align*}
Let $\mathrm{D} = \mathrm{D}_+ \cup \mathrm{D}_-$, and define the product Sobolev space
\begin{align*}
	\mathscr{H}_{0,\mathrm{qp}}^1(\mathrm{D}):=H_{\mathrm{0,qp}}^1\left(\mathrm{D}_+\right) \times H_{0,\mathrm{qp}}^1\left(\mathrm{D}_-\right)^2=\left\{\boldsymbol{V}=(\varphi, \boldsymbol{\psi}): \varphi \in H_{\mathrm{0,qp}}^1\left(\mathrm{D}_+\right), \boldsymbol{\psi} \in H_{\mathrm{0,qp}}^1\left(\mathrm{D}_-\right)^2\right\}.
\end{align*}
Similarly, by combining the Green's formula and the first Betti's formula, we arrive at the variational formulation of (\ref{trun PMLproblem}): Given $p^\mathrm{in}$, seek $\hat{\boldsymbol{U}}=\left(\hat{p}^\mathrm{sc}, \hat{\boldsymbol{u}}\right) \in \mathscr{H}^1_\mathrm{qp}(\mathrm{D})$ such that  
\begin{align}\label{trun PMLvarproblem}
	B_\mathrm{D}(\hat{\boldsymbol{U}}, \boldsymbol{V})=L(\boldsymbol{V}), \quad \forall~ \boldsymbol{V}=(\varphi, \boldsymbol{\psi}) \in \mathscr{H}_{0,\mathrm{qp}}^1(\mathrm{D)}.
\end{align}
For any $\mathrm{G}_+ \subseteq \mathrm{D}_+$ and $\mathrm{G}_- \subseteq  \mathrm{D}_-$, we denote $\mathrm{G} = \mathrm{G}_+ \cup \mathrm{G}_-$. Then the sesquilinear form $B_\mathrm{G}: \mathscr{H}^1_\mathrm{qp}(\mathrm{G}) \times \mathscr{H}^1_\mathrm{qp}(\mathrm{G}) \rightarrow \mathbb{C}$ is defined by
\begin{align}\label{trun PMLvarleft}
	\mathrm{B}_\mathrm{G}(\boldsymbol{U}, \boldsymbol{V})
	= \widetilde{B}_\mathrm{G}(\boldsymbol{U}, \boldsymbol{V})
	+ \widetilde{B}_1(\boldsymbol{U}, \boldsymbol{V}),\quad \forall~ \boldsymbol{U}, \boldsymbol{V} \in \mathscr{H}^1_\mathrm{qp}(\mathrm{G}),
\end{align}
where
\begin{align*}
	& \widetilde{B}_\mathrm{G}(\boldsymbol{U}, \boldsymbol{V})
	= \int_{\mathrm{G}_+}\left(\mathbb{A} \nabla {p}^\mathrm{sc} \cdot \nabla \overline{\varphi} -\kappa^2 s(x_2) p^\mathrm{sc} \overline{\varphi}\right) \mathrm{d} \boldsymbol{x}
	+ \int_{\mathrm{G}_-}\left(\mathcal{S}_{\lambda, \mu}(\boldsymbol{u}, \boldsymbol{\psi})- \omega^2\rho s(x_2) \boldsymbol{u} \cdot \overline{\boldsymbol{\psi}}\right) \mathrm{d} \boldsymbol{x},\\
	& \widetilde{B}_1(\boldsymbol{U}, \boldsymbol{V})
	= \int_{\Gamma} p^\mathrm{sc} \boldsymbol{n} \cdot \overline{\boldsymbol{\psi}} \mathrm{~d}s
	+ \int_{\Gamma} \rho_f \omega^2 \boldsymbol{u} \cdot \boldsymbol{n} \overline{\varphi} \mathrm{~d}s
\end{align*}
with
\begin{align*}
	\mathcal{S}_{\lambda, \mu}(\boldsymbol{u}, \boldsymbol{\psi})
	= & (2 \mu + \lambda)\left(s(x_2)\frac{\partial u_1}{\partial x_1} \frac{\partial \overline{\psi}_1}{\partial x_1}+s^{-1}(x_2)\frac{\partial u_2}{\partial x_2} \frac{\partial \overline{\psi}_2}{\partial x_2}\right)
	+\mu\left(s^{-1}(x_2)\frac{\partial u_1}{\partial x_2} \frac{\partial \overline{\psi}_1}{\partial x_2}+s(x_2)\frac{\partial u_2}{\partial x_1} \frac{\partial \overline{\psi}_2}{\partial x_1}\right) \\
	& +\lambda\left(\frac{\partial u_2}{\partial x_2} \frac{\partial \overline{\psi}_1}{\partial x_1}+\frac{\partial u_1}{\partial x_1} \frac{\partial \overline{\psi}_2}{\partial x_2}\right)+\mu\left(\frac{\partial u_2}{\partial x_1} \frac{\partial \overline{\psi}_1}{\partial x_2}+\frac{\partial u_1}{\partial x_2} \frac{\partial \overline{\psi}_2}{\partial x_1}\right).
\end{align*}

\subsection{TBCs of PML model}
In this subsection, we aim to reformulate the problem (\ref{trun PMLvarproblem}) defined in $\mathrm{D}$ into an equivalent weak formulation defined in $\Omega$ by introducing TBCs for the truncated PML model. Let
\begin{align}\label{3.2.1}
	\eta_1=\int_{h_1}^{h_1+\delta_1} s(\tau) \mathrm{~d} \tau,\quad
	\eta_2=\int_{h_2-\delta_2}^{h_2} s(\tau) \mathrm{~d} \tau.
\end{align}
In the complex coordinate,  we have 
\begin{align}
	\Delta_{\hat{\boldsymbol{x}}} \hat{p}^\mathrm{sc} + \kappa^2 \hat{p}^\mathrm{sc}=0 &\quad \text { in }  \Omega^{\mathrm{PML}}_+, \label{p pml}\\
	\Delta^*_{\hat{\boldsymbol{x}}}\hat{\boldsymbol{u}} +  \omega^2\rho\hat{\boldsymbol{u}} = 0 &\quad \text { in }  \Omega^{\mathrm{PML}}_-.\label{u pml}
\end{align}
From \cite{Chen03}, the TBC  on $\Gamma_+$ of the PML problem is defined by
\begin{align}\label{p PML-TBC}
	\mathscr{T}_+^{\mathrm{PML}} \hat{p}^\mathrm{sc} :=\sum_{n \in Z} \mathrm{i} \beta_n \operatorname{coth}\left(-\mathrm{i} \beta_n \eta_1\right) \hat{p}^\mathrm{sc}_{n}(h_1) e^{\mathrm{i}\alpha_n x_1},
\end{align}
where $\operatorname{coth}(y)=\left(\mathrm{e}^y+\mathrm{e}^{-y}\right) /\left(\mathrm{e}^y-\mathrm{e}^{-y}\right)$. 
Next we turn to the derivation of the TBC defined on $\Gamma_-$. As is known, the solution of (\ref{u pml}) admits the Helmholtz decomposition
\begin{align}\label{PML decomp}
	\hat{\boldsymbol{u}}=\nabla_{\hat{\boldsymbol{x}}} \hat{\phi}_1+\textbf{curl} _{\hat{\boldsymbol{x}}} \hat{\phi}_2,
\end{align}
where 
$\textbf{curl}_{\hat{\boldsymbol{x}}}=\left[\partial_{\hat{x}_2},-\partial_{x_1}\right]^{\top}$. Since the solution $\hat{\boldsymbol{u}}$ is quasi-periodic, the function $\hat{\phi}_j(\boldsymbol{x})=\phi_j(\hat{\boldsymbol{x}}),~ j = 1,2$ satisfies the Helmholtz equation
\begin{align}\label{4.1}
	\Delta_{\hat{\boldsymbol{x}}} \hat{\phi}_j+\kappa_j^2 \hat{\phi}_j=0
\end{align}
with the following Fourier series expansion 
\begin{align}\label{4.2}
	\hat{\phi}_j(x_1, x_2)=\sum_{n \in \mathbb{Z}} \hat{\phi}_n^{(j)}(x_2) \mathrm{e}^{\mathrm{i} \alpha_n x_1}.
\end{align}
Combining (\ref{4.1}) and (\ref{4.2}) gives 
\begin{align*}
	s^{-1}(x_2) \frac{\mathrm{~d}}{\mathrm{~d} x_2}\left(s^{-1}(x_2) \frac{\mathrm{~d}}{\mathrm{~d} x_2} \hat{\phi}_n^{(j)}(x_2)\right)+\left(\beta^{(j)}_{n}\right)^2 \hat{\phi}_n^{(j)}(x_2)=0,
\end{align*}
whose general solution is given by 
\begin{align}\label{phisolution}
	\hat{\phi}_n^{(j)}(x_2)=M_n^{(j)} \mathrm{e}^{\mathrm{i} \beta_n^{(j)} \int_{x_2}^{h_2} s(\tau) \mathrm{d} \tau}+N_n^{(j)} \mathrm{e}^{-\mathrm{i} \beta_n^{(j)} \int_{x_2}^{h_2} s(\tau) \mathrm{d} \tau} .
\end{align}
By using equations (\ref{PML decomp}), (\ref{4.2}), (\ref{phisolution}) and the homogeneous Dirichlet boundary condition
\begin{align*}
	\hat{\boldsymbol{u}}(x_1, h_2-\delta_2)=0 \quad \text { on } \Gamma_-^{\mathrm{PML}},
\end{align*}
we can see that the coefficients  $M_n^{(j)}$  and  $N_n^{(j)} $  satisfy the linear equations 
\begin{align*}
	&\left[\begin{array}{cccc}
		\alpha_n & \alpha_n & -\beta_n^{(2)} & \beta_n^{(2)} \\
		-\beta_n^{(1)} & \beta_n^{(1)} & -\alpha_n & -\alpha_n \\
		\alpha_n \mathrm{e}^{\mathrm{i} \beta_n^{(1)} \eta_2} & \alpha_n \mathrm{e}^{-\mathrm{i} \beta_n^{(1)} \eta_2} & -\beta_n^{(2)} \mathrm{e}^{\mathrm{i} \beta_n^{(2)} \eta_2} & \beta_n^{(2)} \mathrm{e}^{-\mathrm{i} \beta_n^{(2)} \eta_2}  \\
		-\beta_n^{(1)} \mathrm{e}^{\mathrm{i} \beta_n^{(1)} \eta_2} & \beta_n^{(1)} \mathrm{e}^{-\mathrm{i} \beta_n^{(1)} \eta_2} & -\alpha_n \mathrm{e}^{\mathrm{i} \beta_n^{(2)} \eta_2} & -\alpha_n \mathrm{e}^{-\mathrm{i} \beta_n^{(2)} \eta_2}
	\end{array}\right]\left[\begin{array}{c}
		M_n^{(1)} \\
		N_n^{(1)} \\
		M_n^{(2)} \\
		N_n^{(2)}
	\end{array}\right]=\left[\begin{array}{c}
		-\mathrm{i} \hat{u}_n^{(1)}(h_2) \\
		-\mathrm{i} \hat{u}_n^{(2)}(h_2) \\
		0 \\
		0
	\end{array}\right].
\end{align*}
By solving the above linear equations, one has
\begin{align*}
	\begin{aligned}
		M_n^{(1)} &= \frac{\mathrm{i}}{ \mathcal{X}_{n} \hat{\mathcal{X}}_{n}}\left\{-\frac{\mathcal{X}_{n}}{2}\left(\varsigma_n^{(1)}+2\right)\left(\alpha_n \hat{u}_n^{(1)}(h_2)-\beta_n^{(2)} \hat{u}_n^{(2)}(h_2)\right)\right. \\
		& ~~ + \left.\left(\varsigma_n^{(1)}+2 \xi_n^{(1)}\right)\left(\xi_n^{(2)}-\vartheta_{n}+1\right)\left(\alpha_n \beta_n^{(1)}\beta_n^{(2)} \hat{u}_n^{(1)}(h_2)-\alpha_n^2\beta_n^{(2)} \hat{u}_n^{(2)}(h_2)\right)\right\}, \\
		N_n^{(1)} &= \frac{\mathrm{i}}{ \mathcal{X}_{n} \hat{\mathcal{X}}_{n}}\left\{\frac{\mathcal{X}_{n}\varsigma_n^{(1)} }{2}\left(\alpha_n \hat{u}_n^{(1)}(h_2)+\beta_n^{(2)} \hat{u}_n^{(2)}(h_2)\right)\right. \\
		&~~ +\left.\left(\varsigma_n^{(1)} \xi_n^{(2)}+2\xi_n^{(1)} \xi_n^{(2)}+2\xi_n^{(1)}\right)\left(\alpha_n \beta_n^{(1)} \beta_n^{(2)} \hat{u}_n^{(1)}(h_2)+\alpha_n^2 \beta_n^{(2)} \hat{u}_n^{(2)}(h_2)\right)\right\}, \\
		M_n^{(2)} &= \frac{\mathrm{i}}{ \mathcal{X}_{n} \hat{\mathcal{X}}_{n}}\left\{\frac{\mathcal{X}_{n}}{2}\left[\varsigma_n^{(1)} \vartheta_{n}-2\left(\varsigma_n^{(1)}+1\right)\left(\xi_n^{(2)}+1\right)\right]\left(-\beta_n^{(1)} \hat{u}_n^{(1)}(h_2)-\alpha_n \hat{u}_n^{(2)}(h_2)\right)\right. \\
		&~~+\left.\varsigma_n^{(1)}\left(\xi_n^{(2)}-\vartheta_{n}+1\right)\left(-\left(\beta_n^{(1)}\right)^2 \beta_n^{(2)} \hat{u}_n^{(1)}(h_2)-\alpha_n^3 \hat{u}_n^{(2)}(h_2)\right)\right\}, \\
		N_n^{(2)} &= \frac{\mathrm{i}}{ \mathcal{X}_{n} \hat{\mathcal{X}}_{n}}\left\{\frac{\mathcal{X}_{n}}{2}\left[2 \xi_n^{(2)}\left(\varsigma_n^{(1)}+1\right)-\varsigma_n^{(1)} \vartheta_{n}\right]\left(-\beta_n^{(1)} \hat{u}_n^{(1)}(h_2)+\alpha_n \hat{u}_n^{(2)}(h_2)\right)\right. \\
		&~~ -\left. \xi_n^{(2)}\left(\varsigma_n^{(1)}+2\right)\left(-\left(\beta_n^{(1)}\right)^2 \beta_n^{(2)} \hat{u}_n^{(1)}(h_2)+\alpha_n^3 \hat{u}_n^{(2)}(h_2)\right)\right\},
	\end{aligned}
\end{align*}
where
\begin{align*}
	\varsigma_n^{(j)}  &=\operatorname{coth}\left(-\mathrm{i} \beta_n^{(j)} \eta_2\right)-1, \\ 
	\xi_n^{(j)} & =\left(\mathrm{e}^{\mathrm{i} \beta_n^{(2)} \eta_2}-\mathrm{e}^{\mathrm{i} \beta_n^{(1)} \eta_2}\right) /\left(\mathrm{e}^{-\mathrm{i} \beta_n^{(j)} \eta_2}-\mathrm{e}^{\mathrm{i} \beta_n^{(j)} \eta_2}\right), \\ 
	\vartheta_{n} &=\left(\mathrm{e}^{-\mathrm{i} \beta_n^{(1)}\eta_2} -\mathrm{e}^{\mathrm{i} \beta_n^{(1)} \eta_2}\right) /\left(\mathrm{e}^{-\mathrm{i} \beta_n^{(2)} \eta_2}-\mathrm{e}^{\mathrm{i} \beta_n^{(2)} \eta_2}\right),\\
	\hat{\mathcal{X}}_{n}&=\mathcal{X}_{n}+4\alpha_n^2 \beta_n^{(1)} \beta_n^{(2)}\left(\xi_n^{(2)}-\xi_n^{(1)}-\xi_n^{(1)} \xi_n^{(2)}\right)  / \mathcal{X}_{n}.
\end{align*}
It follows from (\ref{PML decomp}) that
\begin{align}\label{4.5}
	\hat{\boldsymbol{u}}(x_1, x_2)&=\mathrm{i} \sum_{n \in \mathbb{Z}}\left[\begin{array}{c}
		\alpha_n \\
		-\beta_n^{(1)}
	\end{array}\right] M_n^{(1)} \mathrm{e}^{\mathrm{i}\left(\alpha_n x_1+\beta_n^{(1)} \int_{x_2}^{h_2} s(\tau) \mathrm{d} \tau\right)}+\left[\begin{array}{c}
		\alpha_n \\
		\beta_n^{(1)}
	\end{array}\right] N_n^{(1)} \mathrm{e}^{\mathrm{i}\left(\alpha_n x_1-\beta_n^{(1)} \int_{x_2}^{h_2} s(\tau) \mathrm{d} \tau\right)} \notag  \\  
	&~~ -{\left[\begin{array}{c}
			\beta_n^{(2)} \\
			\alpha_n
		\end{array}\right] M_n^{(2)} \mathrm{e}^{\mathrm{i}\left(\alpha_n x_1+\beta_n^{(2)} \int_{x_2}^{h_2} s(\tau) \mathrm{d} \tau\right)}+\left[\begin{array}{c}
			\beta_n^{(2)} \\
			-\alpha_n
		\end{array}\right] N_n^{(2)} \mathrm{e}^{\mathrm{i}\left(\alpha_n x_1-\beta_n^{(2)} \int_{x_2}^{h_2} s(\tau) \mathrm{d} \tau\right).}}
\end{align}
Using (\ref{uboun oper}) and (\ref{4.5}) yields the TBC for the transmitted field $\hat{\boldsymbol{u}}$ as
\begin{align}\label{u PML-TBC}
	\mathscr{B} \hat{\boldsymbol{u}}=\mathscr{T}_-^{\mathrm{PML}} \hat{\boldsymbol{u}}:=\sum_{n \in \mathbb{Z}} \hat{W}^{(n)} \hat{\boldsymbol{u}}_{n}(h_2) \mathrm{e}^{\mathrm{i} \alpha_n x_1},
\end{align}
where $\hat{W}^{(n)}=\left[\begin{array}{ll}
	\hat{w}_{11}^{(n)} & \hat{w}_{12}^{(n)} \\
	\hat{w}_{21}^{(n)} & \hat{w}_{22}^{(n)}
\end{array}\right]$ is a $2 \times 2$ matrix with entries defined by 
\begin{align*}
	& \hat{w}_{11}^{(n) }= \frac{\mathrm{i}}{ \mathcal{X}_{n} \hat{\mathcal{X}}_{n}}\left\{
	\omega^2 \rho \beta_n^{(1)}\mathcal{X}_{n}+ \omega^2\rho \beta_n^{(1)}\left[\varsigma_n^{(1)} \alpha_n^2+\left(\varsigma_n^{(1)} \vartheta_{n}+2 \xi_n^{(2)}\right) \beta_n^{(1)} \beta_n^{(2)}\right]\right\}, \\
	& \hat{w}_{12}^{(n)} = \frac{\mathrm{i}\alpha_n}{ \mathcal{X}_{n} \hat{\mathcal{X}}_{n}}\left\{
	- 2\mu \mathcal{X}_{n} \hat{\mathcal{X}}_{n} 
	+ \omega^2\rho \mathcal{X}_{n} 
	+ \omega^2\rho \beta_n^{(1)}\beta_n^{(2)}   \left[\varsigma_n^{(1)}\left(2 \xi_n^{(2)}
	-\vartheta_{n}+1\right)+2 \xi_n^{(2)}\right] \right\}, \\
	& \hat{w}_{21}^{(n)} = \frac{\mathrm{i}\alpha_n}{ \mathcal{X}_{n} \hat{\mathcal{X}}_{n}} \left\{
	2\mu\mathcal{X}_{n} \hat{\mathcal{X}}_{n} 
	- \omega^2 \rho \mathcal{X}_{n}
	+ \omega^2\rho  \beta_n^{(1)} \beta_n^{(2)} 
	\left[\varsigma_n^{(1)}\left(2 \xi_n^{(2)}-\vartheta_{n}+1\right)+ 4\xi_n^{(1)}\left(  \xi_n^{(2)} + 1\right) -2\xi_n^{(2)} \right] \right\}, \\
	& \hat{w}_{22}^{(n)}=\frac{\mathrm{i}}{ \mathcal{X}_{n} \hat{\mathcal{X}}_{n}} \left\{
	\omega^2 \rho \beta_n^{(2)}\mathcal{X}_{n}
	+ \omega^2 \rho \beta_n^{(2)}
	\left[\left(\varsigma_n^{(1)} \vartheta_{n}+2 \xi_n^{(2)}\right) \alpha_n^2 + \varsigma_n^{(1)} \beta_n^{(1)} \beta_n^{(2)}\right] \right\}.
\end{align*}
Based on (\ref{p PML-TBC}) and (\ref{u PML-TBC}), we can reduce the PML model (\ref{trun PMLproblem}) to an equivalent boundary value problem: Given $p^\mathrm{in}$, seek the quasi-periodic functions $p^{\mathrm{sc},\mathrm{PML}}$ and $\boldsymbol{u}^{\mathrm{PML}}$ such that
\begin{align}\label{PMLproblem}
	\Delta p^{\mathrm{sc},\mathrm{PML}}+\kappa^2 p^{\mathrm{sc},\mathrm{PML}}=0 & \quad\text { in } \Omega_+, \notag\\
	\Delta^* \boldsymbol{u}^\mathrm{PML} + \omega^2\rho \boldsymbol{u}^\mathrm{PML}=0 & \quad\text { in } \Omega_-, \notag \\
	\partial_{\boldsymbol{n}} (p^\mathrm{in}+p^{\mathrm{sc},\mathrm{PML}}) =\rho_f \omega^2 \boldsymbol{u}^\mathrm{PML} \cdot \boldsymbol{n} & \quad\text { on } \Gamma,  \\
	-(p^\mathrm{in}+p^{\mathrm{sc},\mathrm{PML}}) \boldsymbol{n}=\boldsymbol{T} \boldsymbol{u}^\mathrm{PML} & \quad\text { on } \Gamma, \notag \\
	\partial_{x_2} p^{\mathrm{sc},\mathrm{PML}}=\mathscr{T}_+^\mathrm{PML} p^{\mathrm{sc},\mathrm{PML}} & \quad\text { on } \Gamma_+, \notag\\
	\mathscr{B} \boldsymbol{u}^\mathrm{PML}=\mathscr{T}_-^\mathrm{PML} \boldsymbol{u}^\mathrm{PML} &\quad \text { on } \Gamma_-.\notag
\end{align}
Besides, the variational formulation of the problem (\ref{PMLproblem}) reads as: Given $p^\mathrm{in}$, seek ${\boldsymbol{U}}^{\mathrm{PML}}=(p^{\mathrm{sc},\mathrm{PML}}, \boldsymbol{u}^{\mathrm{PML}}) \in \mathscr{H}^1_\mathrm{qp}(\Omega)$ such that
\begin{align}\label{PML varproblem}
	A^{\mathrm{PML}}(\boldsymbol{U}^{\mathrm{PML}}, \boldsymbol{V})
	=L(\boldsymbol{V}), \quad \forall ~ \boldsymbol{V}=(\varphi, \boldsymbol{\psi}) \in \mathscr{H}^1_\mathrm{qp}(\Omega),
\end{align}
where
\begin{align}\label{PML varleft}
	A^{\mathrm{PML}}(\boldsymbol{U}^{\mathrm{PML}}, \boldsymbol{V})
	=\widetilde{A}^{\mathrm{PML}}(\boldsymbol{U}^{\mathrm{PML}}, \boldsymbol{V})
	+\widetilde{B}_1^{\mathrm{PML}}(\boldsymbol{U}^{\mathrm{PML}}, \boldsymbol{V})
	+\widetilde{B}_2^{\mathrm{PML}}(\boldsymbol{U}^{\mathrm{PML}}, \boldsymbol{V})
\end{align}
with
\begin{align*}
	&\widetilde{A}^{\mathrm{PML}}(\boldsymbol{U}^{\mathrm{PML}}, \boldsymbol{V})
	= \int_{\Omega_+}\left(\nabla p^{\mathrm{sc},\mathrm{PML}} \cdot \nabla \overline{\varphi}-\kappa^2 p^{\mathrm{sc},\mathrm{PML}} \overline{\varphi}\right) \mathrm{d} \boldsymbol{x}
	+ \int_{\Omega_-}\left(\mathcal{E}_{\lambda, \mu}(\boldsymbol{u}^\mathrm{PML}, \boldsymbol{v})- \omega^2\rho \boldsymbol{u}^\mathrm{PML} \cdot \overline{\boldsymbol{\psi}}\right) \mathrm{d} \boldsymbol{x},\\
	&\widetilde{B}_1^{\mathrm{PML}}(\boldsymbol{U}^{\mathrm{PML}}, \boldsymbol{V})
	= \int_{\Gamma} \rho_f \omega^2 \boldsymbol{u}^\mathrm{PML} \cdot \boldsymbol{n} \overline{\varphi} \mathrm{~d}s + \int_{\Gamma} p^{\mathrm{sc},\mathrm{PML}} \boldsymbol{n} \cdot \overline{\boldsymbol{\psi}} \mathrm{~d}s, \\
	&\widetilde{B}_2^{\mathrm{PML}}(\boldsymbol{U}^{\mathrm{PML}}, \boldsymbol{V})=
	- \int_{\Gamma_+} \mathscr{T}_+^\mathrm{PML} p^{\mathrm{sc},\mathrm{PML}} \overline{\varphi} \mathrm{~d}s - \int_{\Gamma_-} \mathscr{T}_-^\mathrm{PML} \boldsymbol{u}^\mathrm{PML} \cdot \overline{\boldsymbol{\psi}} \mathrm{~d}s.
\end{align*}

The relationship between the variational formulations (\ref{trun PMLvarproblem}) and (\ref{PML varproblem}) can be established via the following lemma. The proof is direct based on our constructions of TBCs for the PML model. For brevity, we omit the detailed steps here.
\begin{lemma}
	Any solution $\hat{\boldsymbol{U}}$ of (\ref{trun PMLvarproblem}) restricted to the domain $\Omega$ is a solution of (\ref{PML varproblem}). Conversely, any solution $\boldsymbol{U}^\mathrm{PML}$ of (\ref{PML varproblem}) can be uniquely extended to the region $\mathrm{D}$ to be a solution $\hat{\boldsymbol{U}}$ of (\ref{trun PMLvarproblem}).
\end{lemma}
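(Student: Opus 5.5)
The equivalence will follow from a splitting of the PML sesquilinear form $B_\mathrm{D}$ across $\Gamma_\pm$, combined with the explicit Fourier solutions in the layers from which $\mathscr{T}_\pm^{\mathrm{PML}}$ were built. Since $s\equiv 1$ in $h_2<x_2<h_1$, we have
\[
B_\mathrm{D}(\boldsymbol{U},\boldsymbol{V})=\widetilde{A}^{\mathrm{PML}}(\boldsymbol{U}|_\Omega,\boldsymbol{V}|_\Omega)+\widetilde{B}_1(\boldsymbol{U},\boldsymbol{V})+\widetilde{B}_{\Omega_+^{\mathrm{PML}}}(\boldsymbol{U},\boldsymbol{V})+\widetilde{B}_{\Omega_-^{\mathrm{PML}}}(\boldsymbol{U},\boldsymbol{V}).
\]
Here $\mathcal{S}_{\lambda,\mu}$ reduces to $\mathcal{E}_{\lambda,\mu}$ in $\Omega_-$; for this I will check the cross terms, since $\mathcal{E}_{\lambda,\mu}$ and $\mathcal{S}_{\lambda,\mu}$ with $s=1$ are both Betti-type forms of $\Delta^*$ and differ only by a null Lagrangian. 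That null Lagrangian produces a boundary term on $\Gamma$ and $\Gamma_-$, and I will absorb it by using the matching definition of the traction $\boldsymbol{T}$ and of $\mathscr{T}_-$ in (\ref{uboun oper}).

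\textbf{Forward direction.} Let $\hat{\boldsymbol{U}}$ solve (\ref{trun PMLvarproblem}).
\begin{itemize}
\item Taking test functions supported in $\mathrm{D}$ gives $\mathcal{L}_1\hat p^\mathrm{sc}=0$ and $\mathcal{L}_2\hat{\boldsymbol{u}}=0$ in the layers, with zero Dirichlet data on $\Gamma_\pm^{\mathrm{PML}}$.
\item In the layers I expand in $e^{\mathrm{i}\alpha_n x_1}$. Each Fourier mode then satisfies the stretched ODE, whose general solution is (\ref{phisolution}) for the potentials, or its scalar analogue for $\hat p$.
\item Mode-wise uniqueness of these two-point problems holds provided $\coth(-\mathrm{i}\beta_n\eta_1)$ is finite and $\hat{\mathcal{X}}_n\neq 0$, i.e. the $4\times4$ system is invertible. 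This is implicitly assumed in defining $\hat W^{(n)}$, and I will state it as a standing assumption. Under it, the layer solution is exactly the one determined by the traces $\hat p_n(h_1)$ and $\hat{\boldsymbol{u}}_n(h_2)$.
\item Integrating by parts in $\Omega_\pm^{\mathrm{PML}}$ against arbitrary $\boldsymbol{V}$ leaves only the boundary terms on $\Gamma_\pm$, namely $-\int_{\Gamma_+}\partial_{x_2}\hat p\,\overline{\varphi}$ and $-\int_{\Gamma_-}\mathscr{B}\hat{\boldsymbol{u}}\cdot\overline{\boldsymbol{\psi}}$, with the orientation dictated by the normals.
\item By construction, (\ref{p PML-TBC}) and (\ref{u PML-TBC}) give $\partial_{x_2}\hat p=\mathscr{T}_+^{\mathrm{PML}}\hat p$ and $\mathscr{B}\hat{\boldsymbol{u}}=\mathscr{T}_-^{\mathrm{PML}}\hat{\boldsymbol{u}}$ on $\Gamma_\pm$. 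Hence the layer contributions equal $\widetilde{B}_2^{\mathrm{PML}}$.
\item Any $\boldsymbol{V}\in\mathscr{H}^1_{\mathrm{qp}}(\Omega)$ extends to $\mathscr{H}^1_{0,\mathrm{qp}}(\mathrm{D})$, for instance mode-wise by multiplying by a cutoff in $x_2$. Therefore $\hat{\boldsymbol{U}}|_\Omega$ satisfies (\ref{PML varproblem}).
\end{itemize}

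\textbf{Converse.} Given $\boldsymbol{U}^{\mathrm{PML}}$, I define the extension mode by mode.
\begin{itemize}
\item For the acoustic part, $\hat p_n(x_2)=p_n(h_1)\,\sinh\!\big(-\mathrm{i}\beta_n\int_{x_2}^{h_1+\delta_1}s\big)/\sinh(-\mathrm{i}\beta_n\eta_1)$.
\item For the elastic part, I use (\ref{4.5}) with $M_n^{(j)},N_n^{(j)}$ given by the solved formulas with data $\boldsymbol{u}_n(h_2)$.
\item These extensions are continuous across $\Gamma_\pm$, vanish on $\Gamma_\pm^{\mathrm{PML}}$, and solve the PML equations. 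Their normal derivative and traction on $\Gamma_\pm$ are $\mathscr{T}_\pm^{\mathrm{PML}}$ applied to the traces.
\item The same integration by parts run backwards shows that $B_\mathrm{D}(\hat{\boldsymbol{U}},\boldsymbol{V})=A^{\mathrm{PML}}(\boldsymbol{U}^{\mathrm{PML}},\boldsymbol{V}|_\Omega)=L(\boldsymbol{V})$.
\item Uniqueness of the extension follows from the mode-wise uniqueness already established.
\end{itemize}

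\textbf{Main obstacle.} The hardest point is proving that the extension lies in $H^1$ of the layers. This requires bounds on the mode multipliers that are uniform in $n$: $|\hat p_n(x_2)|\lesssim|p_n(h_1)|$, with derivative growth like $|\alpha_n|$, and analogous bounds for the elastic coefficients. I would get these from the asymptotics $\beta_n^{(j)}\sim\mathrm{i}|\alpha_n|$, under which $\varsigma_n,\xi_n$ and $\vartheta_n-1$ are exponentially small and $\hat{\mathcal{X}}_n\to\mathcal{X}_n\sim$ const$\cdot\alpha_n^2$. Summing then gives
\[
\|\hat{\boldsymbol{U}}\|_{H^1(\Omega^{\mathrm{PML}}_\pm)}\lesssim\|\boldsymbol{U}\|_{H^{1/2}(\Gamma_\pm)}.
\]
The finitely many propagating modes are handled by the nonvanishing-determinant assumption.
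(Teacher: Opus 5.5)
\textbf{Verdict.} Your proof has a genuine gap: the $H^1$ bound for the extension, which you yourself flag as the main obstacle, does not close as you sketch it.

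\textbf{What matches the paper.} Your restriction/extension argument is the direct argument the paper has in mind; the paper omits the proof. The algebra is right. $B_\mathrm{D}$ splits as you say, and the layer integrals reduce to $-\int_{\Gamma_+}\partial_{x_2}\hat p^{\mathrm{sc}}\,\overline{\varphi}\,\mathrm{d}s$ and $-\int_{\Gamma_-}\mathscr{B}\hat{\boldsymbol u}\cdot\overline{\boldsymbol\psi}\,\mathrm{d}s$ with the signs you state. Mode-wise uniqueness of the two-point problems then gives both directions and uniqueness of the extension.

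Two of your worries are unnecessary:
\begin{itemize}
\item At $s=1$ the form $\mathcal{S}_{\lambda,\mu}$ coincides term by term with $\mathcal{E}_{\lambda,\mu}$, and $\boldsymbol T$, $\mathscr B$ are exactly its conormal derivatives. There is no null Lagrangian to absorb.
\item $\sinh(-\mathrm{i}\beta_n\eta_1)\neq 0$ holds automatically, because $\Re(-\mathrm{i}\beta_n\eta_1)>0$. Only $\hat{\mathcal X}_n\neq 0$ is a genuine standing assumption.
\end{itemize}

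\textbf{Acoustic part.} Your bounds $|\hat p_n(x_2)|\lesssim|p_n(h_1)|$ and $|\hat p_n'(x_2)|\lesssim|\alpha_n||p_n(h_1)|$ only give $\|\hat p\|^2_{H^1(\Omega_+^{\mathrm{PML}})}\lesssim\delta_1\sum_n(1+\alpha_n^2)|p_n(h_1)|^2$. That is an $H^1(\Gamma_+)$ norm, which the trace of an $H^1(\Omega_+)$ function need not have. What you need is decay. From the $\sinh$ quotient and $s_1\ge 1$ you get $|\hat p_n(x_2)|\le Ce^{-\Theta_n(x_2-h_1)}|p_n(h_1)|$. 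Then the $x_2$-profile is $O(|\alpha_n|^{-1/2})$ in $L^2$ and its derivative is $O(|\alpha_n|^{1/2})$, which is exactly the $H^{1/2}$ weight. The same applies to your ``cutoff in $x_2$'' extension of test functions. A profile independent of $n$ is not bounded from $H^{1/2}$ to $H^1$; use reflection across $\Gamma_\pm$ plus a cutoff instead.

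\textbf{Elastic part.} Your asymptotics are wrong.
\begin{itemize}
\item For $|\alpha_n|>\kappa_2$ one has $\beta_n^{(1)}\beta_n^{(2)}=-\Theta_n^{(1)}\Theta_n^{(2)}$. So $\mathcal X_n=\alpha_n^2-\Theta_n^{(1)}\Theta_n^{(2)}\to(\kappa_1^2+\kappa_2^2)/2$; it does not grow like $\alpha_n^2$.
\item $\vartheta_n-1\approx(\kappa_2^2-\kappa_1^2)\eta_2/(2|\alpha_n|)$ is only algebraically small. This is harmless, since it only appears multiplied by $\varsigma_n^{(1)}$ or $\varsigma_n^{(1)}+2\xi_n^{(1)}$.
\end{itemize}
Consequently $M_n^{(1)}$ and $M_n^{(2)}$ are of size $|\alpha_n||\hat{\boldsymbol u}_n(h_2)|$. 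The compressional and shear terms in (\ref{4.5}) are each of size $\alpha_n^2|\hat{\boldsymbol u}_n(h_2)|$. Estimating them separately loses a factor $\alpha_n^2$, and the bound fails.

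\textbf{How to fix it.} You have to exploit the cancellation between the two terms. Set $\zeta=\int_{x_2}^{h_2}s$ and let $P_n$ be the $O(\alpha_n^2)$ compressional matrix from (\ref{urexp}). Up to exponentially small terms, the mode equals $e^{-\Theta_n^{(2)}\zeta}\hat{\boldsymbol u}_n(h_2)+\mathcal X_n^{-1}P_n\big(e^{-\Theta_n^{(1)}\zeta}-e^{-\Theta_n^{(2)}\zeta}\big)\hat{\boldsymbol u}_n(h_2)$. Use $\Theta_n^{(1)}-\Theta_n^{(2)}=(\kappa_2^2-\kappa_1^2)/(\Theta_n^{(1)}+\Theta_n^{(2)})=O(|\alpha_n|^{-1})$. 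This gives
\[
|\hat{\boldsymbol u}_n(x_2)|\lesssim\big(1+|\alpha_n|(h_2-x_2)\big)e^{-\Theta_n^{(2)}(h_2-x_2)}|\hat{\boldsymbol u}_n(h_2)|.
\]
Only then does summation over $n$ give $\|\hat{\boldsymbol u}\|_{H^1(\Omega_-^{\mathrm{PML}})}\lesssim\|\boldsymbol u\|_{H^{1/2}(\Gamma_-)}$.
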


\subsection{Error analysis of PML approximation}
In this subsection, we mainly focus on estimating the error between $\boldsymbol{U}^{\mathrm{PML}}$ and $\boldsymbol{U}$. To begin with, we introduce some notations for facilitating the subsequent theoretical analysis.

For the upper region, let $\Theta_n=\left|\kappa^2-\alpha_n^2\right|^{1 / 2}$ and $I=\left\{n:\left|\alpha_n\right|<\kappa\right\}$. Then we obtain $\beta_n=\Theta_n $ for $n \in I$, and $\beta_n=\mathbf{i} \Theta_n$ for $n \notin I$. Similarly, for the  lower region, define $\Theta_n^{(j)}=\left|\kappa_j^2-\alpha_n^2\right|^{1 / 2}$ and $I_j=\left\{n:\left|\alpha_n\right|<\kappa_j\right\}$, $j=1,2$. Clearly, we have $\beta_n^{(j)}=\Theta_n^{(j)}$ for $n \in I_j$, and $\beta_n^{(j)}=\mathbf{i} \Theta_n^{(j)}$ for $n \notin I_j$. Denote by
\begin{align*}
	&\Theta^{i}=\min \left\{\Theta_{n}: n \in I\right\}, \quad  \Theta^{e}=\min \left\{\Theta_{n}: n \notin I\right\},\\
	&\Theta_j^{i}=\min \left\{\Theta_n^{(j)}: n \in I_j\right\}, \quad \Theta_j^{e}=\min \left\{\Theta_n^{(j)}: n \notin I_j\right\}.
\end{align*}

The following Lemma is the result about the trace theorem.
\begin{lemma}[\text{See} \cite{Chen03, Jiang17-3}]\label{trace}
	For any $\varphi \in H_{\mathrm{qp}}^1\left(\Omega_+\right)$, $\boldsymbol{\psi} \in H_{\mathrm{qp}}^1\left(\Omega_-\right)^2$, one has
	\begin{align*}
		&\|\varphi\|_{L^2(\Gamma_+)} \leq\|\varphi\|_{H^{1 / 2}(\Gamma_+)} \leq \gamma_1\|\varphi\|_{H^1(\Omega_+)},\\
		&\|\boldsymbol{\psi}\|_{L^2(\Gamma_-)^2} \leq\|\boldsymbol{\psi}\|_{H^{1/2}(\Gamma_-)^2} \leq \gamma_1\|\boldsymbol{\psi}\|_{H^1(\Omega_-)^2},
	\end{align*}
	where $\gamma_1=\left(1+(h_1-h_2)^{-1}\right)^{1 / 2}$.
\end{lemma}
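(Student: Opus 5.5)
The first inequality on each line is immediate from the definition of the trace norm. Since $(1+\alpha_n^2)^{1/2}\ge 1$, we have
$\|\varphi\|_{H^{1/2}(\Gamma_+)}^2=\Lambda\sum_n(1+\alpha_n^2)^{1/2}|\varphi_n(h_1)|^2\ge \Lambda\sum_n|\varphi_n(h_1)|^2=\|\varphi\|_{L^2(\Gamma_+)}^2$. The last equality is Parseval's identity for the quasi-periodic Fourier basis $\{e^{\mathrm{i}\alpha_n x_1}\}$ on $(0,\Lambda)$. The same argument works componentwise on $\Gamma_-$. The substance of the lemma is the second inequality, with the explicit constant $\gamma_1$.

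\emph{Reduction to a strip.} For the second inequality I would first reduce to a rectangular strip and then argue mode by mode. Set $b=h_1-h_2$, and assume, as in \cite{Chen03,Jiang17-3}, that the relevant region contains the strip $S_+=\{0<x_1<\Lambda,\ h_1-b<x_2<h_1\}$ adjacent to $\Gamma_+$. If the geometry only guarantees a thinner strip, the constant is read with the corresponding height. Because $\|\varphi\|_{H^1(S_+)}\le\|\varphi\|_{H^1(\Omega_+)}$, it suffices to prove the bound on $S_+$. By density, take $\varphi$ smooth and quasi-periodic, and expand $\varphi=\sum_n\varphi_n(x_2)e^{\mathrm{i}\alpha_nx_1}$. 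Parseval gives
\[
\|\varphi\|_{H^1(S_+)}^2=\Lambda\sum_n\int_{h_1-b}^{h_1}\big((1+\alpha_n^2)|\varphi_n|^2+|\varphi_n'|^2\big)\,\mathrm{d}x_2 .
\]

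\emph{One-dimensional estimate per mode.} For each $n$, write $a_n=(1+\alpha_n^2)^{1/2}$ and $g=\varphi_n$ on $(h_1-b,h_1)$. For any $t$ in this interval,
\[
|g(h_1)|^2=|g(t)|^2+\int_t^{h_1}2\,\mathrm{Re}(g'\bar g)\,\mathrm{d}x_2\le |g(t)|^2+\int_{h_1-b}^{h_1}\big(a_n|g|^2+a_n^{-1}|g'|^2\big)\,\mathrm{d}x_2 ,
\]
using Young's inequality. Averaging over $t$ gives $|g(h_1)|^2\le b^{-1}\int|g|^2+\int(a_n|g|^2+a_n^{-1}|g'|^2)$. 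Multiplying by $a_n$ and using $a_n\le a_n^2$ (since $a_n\ge1$) yields
\[
a_n|g(h_1)|^2\le (1+b^{-1})\int\big(a_n^2|g|^2+|g'|^2\big)\,\mathrm{d}x_2 .
\]
Summing over $n$ with the factor $\Lambda$ gives $\|\varphi\|_{H^{1/2}(\Gamma_+)}^2\le(1+b^{-1})\|\varphi\|_{H^1(S_+)}^2$, which is exactly $\gamma_1^2$. The elastic case follows by applying this to each component of $\boldsymbol\psi$ on a strip below $\Gamma_-$ and summing, with the standard vector $H^1$ norm.

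\emph{Main obstacle.} The main obstacle is geometric rather than analytic: making sure a full strip of height $h_1-h_2$ actually lies inside $\Omega_\pm$ next to $\Gamma_\pm$. Since $f$ lies strictly between $h_2$ and $h_1$, the available strips have heights $h_1-\max f$ and $\min f-h_2$. My first attempt would be to interpret $\gamma_1$ with the strip height permitted by the geometry, following the convention in the cited references. The secondary care point is handling the averaging constant so that it comes out exactly as $1+b^{-1}$.
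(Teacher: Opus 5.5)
Your mode-by-mode argument is correct, and it is the standard proof of \cite{Chen03}; the paper gives no proof of its own and only cites it. The argument runs through Parseval in $x_1$ (where quasi-periodicity gives $(\partial_{x_1}\varphi)_n=\mathrm{i}\alpha_n\varphi_n$), the fundamental theorem of calculus in $x_2$, averaging over $t$, Young's inequality weighted by $a_n$, and $a_n\le a_n^2$. The constant $1+b^{-1}$ comes out as you say.

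What you should not leave as a hedge is the reduction step. A strip of height $h_1-h_2$ below $\Gamma_+$ is the whole set $\{h_2<x_2<h_1\}$. That set crosses $\Gamma$ into $\Omega_-$, where $\varphi$ is not defined, so this is not an assumption you may make. In \cite{Chen03} the domain really is the full strip between the two artificial lines, which is where $b_1-b_2$ comes from. The paper has carried that constant over to a geometry where it is wrong.

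In fact the stated inequality fails. Take $f\equiv c$ with $h_2<c<h_1$ and $\varphi=e^{\mathrm{i}\alpha x_1}\in H^1_{\mathrm{qp}}(\Omega_+)$. Then $\|\varphi\|_{L^2(\Gamma_+)}^2=\Lambda$ and $\|\varphi\|_{H^1(\Omega_+)}^2=\Lambda(h_1-c)(1+\alpha^2)$. So even the weaker bound $\|\varphi\|_{L^2(\Gamma_+)}\le\gamma_1\|\varphi\|_{H^1(\Omega_+)}$ is violated as soon as $h_1-c<\big[(1+(h_1-h_2)^{-1})(1+\alpha^2)\big]^{-1}$. On a flat strip of height $L$, the optimal squared constant is $\coth\big(L\min_n(1+\alpha_n^2)^{1/2}\big)$, which blows up like $1/L$. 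Dependence on the height actually available inside $\Omega_\pm$ is therefore unavoidable.

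So state the conclusion as a correction, not as a reading of the references. Your argument proves the lemma with $\gamma_1$ replaced by $(1+(h_1-\max f)^{-1})^{1/2}$ on $\Gamma_+$ and by $(1+(\min f-h_2)^{-1})^{1/2}$ on $\Gamma_-$, or by the larger of the two as a single constant. No argument can give the constant as printed.

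The correction does no damage downstream. Theorems \ref{theorem1} and \ref{theorem main} only need some trace constant, and the hypothesis $\widetilde{F}\gamma_1^2<\gamma_0$ is simply read with the corrected $\gamma_1$.
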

The Lemma \ref{PML-DtNerror} provides the error estimate between $\mathscr{T}_m^{\mathrm{PML}}$ and $\mathscr{T}_m$ $(m=+,-)$.
\begin{lemma}\label{PML-DtNerror}
	For any $(p,\boldsymbol{u}) \in \mathscr{H}^1_\mathrm{qp}\left(\Omega\right)$, $(\varphi, \boldsymbol{\psi}) \in \mathscr{H}^1_\mathrm{qp}\left(\Omega\right)$, we obtain
	\begin{align}
		&\left|\left\langle\left(\mathscr{T}_+^{\mathrm{PML}}-\mathscr{T}_+\right) p, \varphi\right\rangle_{\Gamma_+}\right| \leq F_1\|p\|_{L^2(\Gamma_+)}\|\varphi\|_{L^2(\Gamma_+)},\label{DtNerror1}\\
		&\left|\left\langle\left(\mathscr{T}_-^{\mathrm{PML}}-\mathscr{T}_-\right) \boldsymbol{u}, \boldsymbol{\psi}\right\rangle_{\Gamma_-}\right| \leq 
		F_2\|\boldsymbol{u}\|_{L^2(\Gamma_-)^2}\|\boldsymbol{\psi}\|_{L^2(\Gamma_-)^2},\label{DtNerror2}
	\end{align}
	where 
	\begin{align*}
		F_1=\max \left\{\frac{2 \Theta^{i}}{e^{2 \Im{\eta_1} \Theta^{i}}-1}, \frac{2 \Theta^{e}}{e^{2 \Re{\eta_1} \Theta^{e}}-1}\right\}
	\end{align*}
	and
	\begin{align*}
		F_2=\frac{34 \omega^2\rho}{\kappa_1^4} \times \max _{j=1,2}\left\{\frac{\Theta_j^{i}}{\mathrm{e}^{\frac{1}{2} \Theta_j^{i} \Im{\eta_2}}-1}, \frac{\Theta_j^{e}}{\mathrm{e}^{\frac{1}{2} \Theta_j^{e} \Re{\eta_2}}-1}\right\} \times \max \left\{6\kappa_2, \kappa_2^2+4, 8\kappa_2^4, \frac{8\kappa_2^3}{\kappa_1^2},\frac{12\left(\kappa_2^2+16\right)^2}{\kappa_1^2}\right\}.
	\end{align*}
\end{lemma}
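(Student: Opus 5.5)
Both estimates will be proved mode by mode, using the Fourier representations (\ref{dw}), (\ref{p PML-TBC}), (\ref{udtn}) and (\ref{u PML-TBC}). Parseval's identity on $\Gamma_\pm$ will turn each pairing into a weighted sum over $n$. It then suffices to bound the symbol difference uniformly in $n$ and finish with Cauchy--Schwarz. Throughout I use that $\Re\eta_j\ge\delta_j>0$ and $\Im\eta_j>0$, since $s_1\ge1$ and $s_2>0$ in the layers.

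\textbf{Acoustic estimate (\ref{DtNerror1}).} For each $n$, I will write
\begin{align*}
\mathrm{i}\beta_n\bigl(\coth(-\mathrm{i}\beta_n\eta_1)-1\bigr)=\frac{2\mathrm{i}\beta_n}{e^{-2\mathrm{i}\beta_n\eta_1}-1},
\end{align*}
so that $\langle(\mathscr{T}_+^{\mathrm{PML}}-\mathscr{T}_+)p,\varphi\rangle_{\Gamma_+}=\Lambda\sum_n \frac{2\mathrm{i}\beta_n}{e^{-2\mathrm{i}\beta_n\eta_1}-1}p_n\overline{\varphi_n}$. I split into two cases.
\begin{itemize}
\item For $n\in I$, $\beta_n=\Theta_n$ is real. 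Then $|e^{-2\mathrm{i}\Theta_n\eta_1}|=e^{2\Theta_n\Im\eta_1}$, and the factor is bounded by $2\Theta_n/(e^{2\Theta_n\Im\eta_1}-1)$.
\item For $n\notin I$, $\beta_n=\mathrm{i}\Theta_n$. The same computation gives $2\Theta_n/(e^{2\Theta_n\Re\eta_1}-1)$.
\end{itemize}
The function $t\mapsto t/(e^{ct}-1)$ is decreasing on $(0,\infty)$, so the supremum over $n$ is attained at $\Theta^i$, respectively $\Theta^e$. This gives $F_1$. Cauchy--Schwarz together with $\|p\|^2_{L^2(\Gamma_+)}=\Lambda\sum_n|p_n|^2$ finishes this part.

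\textbf{Elastic estimate (\ref{DtNerror2}).} Here I must bound the spectral norm of $\hat W^{(n)}-W_n$ uniformly in $n$. The plan is to express every entry of the difference as a linear combination of the small quantities $\varsigma_n^{(j)}$ and $\xi_n^{(j)}$, plus the deviation $\hat{\mathcal{X}}_n-\mathcal{X}_n$, which is itself of order $\xi_n^{(j)}$. Concretely, I will rewrite
\begin{align*}
\frac{1}{\mathcal{X}_n\hat{\mathcal{X}}_n}=\frac{1}{\mathcal{X}_n^2}\cdot\frac{\mathcal{X}_n}{\hat{\mathcal{X}}_n}
\end{align*}
and subtract the corresponding entry of $W_n$. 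The zeroth-order parts match the entries of $W_n$, including the $\mp2\mu\alpha_n$ term in the off-diagonal entries. What remains are terms of the form $\omega^2\rho\,\beta_n^{(j)}\times(\text{polynomial in }\alpha_n,\beta_n^{(1)},\beta_n^{(2)})\times(\varsigma\text{ or }\xi)/(\mathcal{X}_n\hat{\mathcal{X}}_n)$.

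I will then establish three auxiliary bounds.
\begin{itemize}
\item \emph{Exponential smallness.} Writing $\varsigma_n^{(j)}=2/(e^{-2\mathrm{i}\beta_n^{(j)}\eta_2}-1)$ gives $|\varsigma_n^{(j)}|\lesssim 1/(e^{\Theta_n^{(j)}\cdot c}-1)$, with $c=\Im\eta_2$ or $\Re\eta_2$ according to whether $n\in I_j$. For $\xi_n^{(j)}$ and $\vartheta_n$, I will factor out the dominant exponential; using $\Theta^{(2)}_n\ge\Theta^{(1)}_n$ in the evanescent regime gives a similar bound with exponent $\tfrac12\Theta_n^{(j)}\Re\eta_2$. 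This loss of a factor is what produces the $\tfrac12$ in $F_2$.
\item \emph{Lower bound on $\mathcal{X}_n$.} Since $\beta_n^{(1)}\beta_n^{(2)}$ can be negative when only the shear mode propagates, the bound $|\mathcal{X}_n|\ge\kappa_1^2$ has to be checked case by case: propagating/propagating, mixed, and evanescent/evanescent. In the last case $\mathcal{X}_n=\alpha_n^2-\Theta^{(1)}_n\Theta^{(2)}_n\ge(\kappa_1^2+\kappa_2^2)/2$ by an elementary inequality. This is where the $\kappa_1^{-4}$ in $F_2$ comes from.
\item \emph{Lower bound on $\hat{\mathcal{X}}_n$.} Treating $\hat{\mathcal{X}}_n$ as a perturbation of $\mathcal{X}_n$, I will show $|\hat{\mathcal{X}}_n|\ge|\mathcal{X}_n|/2$.
\end{itemize}

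Finally, the growth factors $\alpha_n^2$ and $\alpha_n\beta_n^{(j)}$ in the numerators must be absorbed. Each growing power of $\Theta_n^{(j)}$ is paired with the exponential decay, through an inequality of the form $t^k/(e^{ct}-1)\le C\,t/(e^{ct/2}-1)$ up to constants. The constants collected in the maximum over $\{6\kappa_2,\kappa_2^2+4,\dots\}$ then follow, and Parseval plus Cauchy--Schwarz complete the proof as in the acoustic case.

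\textbf{Main obstacle.} The hardest step is the uniform-in-$n$ control of the elastic symbol. The perturbation argument for $\hat{\mathcal{X}}_n$ only works once $\xi_n^{(j)}$ is small, which is automatic for large $n$ but has to be assumed for the finitely many propagating modes. My first attempt would be to require that the PML parameters make $|\xi_n^{(j)}|\le1/8$. If that fails, I would bound $\hat{\mathcal{X}}_n$ directly from the determinant of the $4\times4$ system, whose invertibility is equivalent to the unique solvability on the layer.
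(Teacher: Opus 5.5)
Your route is the one the paper relies on: its proof simply defers to the mode-by-mode symbol estimates of the acoustic and elastic PML papers it cites. Your acoustic half is complete and correct. The identity $\coth y-1=2/(e^{2y}-1)$, the two modulus computations and the monotonicity of $t\mapsto t/(e^{ct}-1)$ give exactly $F_1$.

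The elastic half, however, has a real gap at the step you flag, and your proposed fix does not close it. Write $\hat{\mathcal{X}}_n/\mathcal{X}_n=1+w_n(\xi_n^{(2)}-\xi_n^{(1)}-\xi_n^{(1)}\xi_n^{(2)})$ with $w_n=4\alpha_n^2\beta_n^{(1)}\beta_n^{(2)}/\mathcal{X}_n^2$. By AM--GM, $|w_n|\le1$ for $n\in I_1$ and $|w_n|\le2$ in the mixed range. In the doubly evanescent range, though, $\mathcal{X}_n$ stays between $(\kappa_1^2+\kappa_2^2)/2$ and $\kappa_1^2+\kappa_2^2$ while $\alpha_n^2\Theta_n^{(1)}\Theta_n^{(2)}\sim\alpha_n^4$, so $|w_n|$ grows like $\alpha_n^4$. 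Hence ``$|\xi_n^{(j)}|\le1/8$ for the propagating modes, automatic for large $n$'' does not yield $|\hat{\mathcal{X}}_n|\ge|\mathcal{X}_n|/2$. The first evanescent modes are covered by neither clause, because the index beyond which smallness is automatic depends on $\eta_2$. What is actually needed is $\sup_n|w_n|\,|\xi_n^{(2)}-\xi_n^{(1)}-\xi_n^{(1)}\xi_n^{(2)}|\le1/2$, i.e. $\Theta_j^{i}\Im\eta_2$ and $\Theta_j^{e}\Re\eta_2$ above a threshold depending on $\kappa_1,\kappa_2$. That is a hypothesis the statement does not contain. Your fallback does not replace it: invertibility of the $4\times4$ system (which the paper also takes for granted) gives at most $\hat{\mathcal{X}}_n\neq0$ mode by mode. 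It does not give the uniform bound $|\mathcal{X}_n\hat{\mathcal{X}}_n|\gtrsim\kappa_1^4$ that an explicit constant like $F_2$ requires.

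The absorption step has the same defect. Since $\sup_{t>0}t^{k-1}/(e^{ct/2}+1)$ is of order $c^{-(k-1)}$, the inequality $t^k/(e^{ct}-1)\le C\,t/(e^{ct/2}-1)$ holds only with $C$ depending on $\Re\eta_2$ or $\Im\eta_2$. The bracket $\max\{6\kappa_2,\dots\}$ in $F_2$, by contrast, depends on $\kappa_1,\kappa_2$ alone. To reach constants of that kind you must either again assume $\Theta_j^{e}\Re\eta_2$ and $\Theta_j^{i}\Im\eta_2$ bounded below, or exhibit cancellations that kill the growth. For instance, in the evanescent range $\alpha_n^2-\Theta_n^{(1)}\Theta_n^{(2)}=\mathcal{X}_n$ and $\Theta_n^{(1)}-\Theta_n^{(2)}=(\kappa_2^2-\kappa_1^2)/(\Theta_n^{(1)}+\Theta_n^{(2)})$ are bounded; your sketch does not use anything of this kind.

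There are also two smaller slips. In the evanescent range $\Theta_n^{(1)}>\Theta_n^{(2)}$ (your ordering holds only for $n\in I_1$), so your account of the factor $\frac12$ must be redone. And $\beta_n^{(1)}\beta_n^{(2)}$ is negative in the doubly evanescent case; in the mixed case it is purely imaginary, and there $|\mathcal{X}_n|\ge\alpha_n^2>\kappa_1^2$ directly.

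Since the paper's proof is only a citation, it does not settle the $\hat{\mathcal{X}}_n$ issue either. The clean repair is to add an explicit largeness condition on the PML parameters as a hypothesis of the lemma. This is in line with how the lemma is used, since Theorem~\ref{theorem1} and the algorithm both require $F_2$ to be small.
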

\begin{proof}
	The derivation of (\ref{DtNerror1}) is provided in \cite[Lemma 2.2]{Chen03}, and inequality (\ref{DtNerror2}) follows by an argument analogous to \cite[Lemma 3.2]{Jiang17-3}. Details are omitted here for brief.
\end{proof}
In subsequent theoretical analysis, we always take the medium function $s(x_2)$ as 
\begin{align*}
	\begin{array}{ll}
		s(x_2) = 1+\sigma_1 \left(\frac{x_2-h_1}{\delta_1}\right)^t, & \quad\text { if } x_2 \geq h_1,\quad t \geq 1, \\
		s(x_2) = 1+\sigma_2 \left(\frac{h_2-x_2}{\delta_2}\right)^t, & \quad\text { if } x_2 \leq h_2,\quad t \geq 1.
	\end{array}
\end{align*}
Based on the definition of $\eta_j$ ($j=1,2$) given by (\ref{3.2.1}), a straightforward calculation shows
$$
\Re \eta_j=\left(1+\frac{\Re \sigma_j}{t+1}\right) \delta_j, \quad \Im \eta_j=\left(\frac{\Im \sigma_j}{t+1}\right) \delta_j.
$$
which further implies that both $\Re{\eta_j}$ and $\Im{\eta_j}$ are determined by the PML parameters $\sigma_j$ and $\delta_j$. As is shown in Lemma \ref{PML-DtNerror}, the errors associated with the boundary operators decay exponentially with respect to the PML parameters.

We are now ready to estimate the error between the solution of the PML problem and that of the original problem.
\begin{theorem}\label{theorem1}
	Let $\boldsymbol{U}$ and $\boldsymbol{U}^{\mathrm{PML}}$ be respective solutions to (\ref{DtN varproblem}) and (\ref{PML varproblem}). If $\widetilde{F}\gamma_1^2<\gamma_0$, then the truncated PML  problem (\ref{PML varproblem}) has a unique solution $\boldsymbol{U}^{\mathrm{PML}}$, which satisfies the following error estimate:
	\begin{align}\label{4.7}
		\|\boldsymbol{U}-\boldsymbol{U}^{\mathrm{PML}}\|_{\mathscr{H}^1_\mathrm{qp}(\Omega)}&:=\sup _{0 \neq \boldsymbol{V} \in \mathscr{H}^1_\mathrm{qp}(\Omega)} \frac{\left|A\left(\boldsymbol{U}-\boldsymbol{U}^{\mathrm{PML}}, \boldsymbol{V}\right)\right|}{\|\boldsymbol{V}\|_{\mathscr{H}^1_\mathrm{qp}(\Omega)}} \notag\\ &\leq 
		F_1\gamma_1\left\|p^{\mathrm{sc},\mathrm{PML}}\right\|_{L^2(\Gamma_+)}+F_2\gamma_1\left\|\boldsymbol{u}^{\mathrm{PML}}\right\|_{L^2(\Gamma_-)^2},
	\end{align}
	where $\widetilde{F}=\max\left\{F_1, F_2\right\}$, and the constants $\gamma_0, \gamma_1$ are respectively presented in  (\ref{dtn inf-sup}) and Lemma \ref{trace}.
\end{theorem}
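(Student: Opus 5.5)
The plan is a perturbation argument: we compare the sesquilinear forms $A$ and $A^{\mathrm{PML}}$ and then use the inf-sup condition (\ref{dtn inf-sup}) together with Lemma \ref{PML-DtNerror} and Lemma \ref{trace}.

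The first observation is that $A$ and $A^{\mathrm{PML}}$ share the volume terms $\widetilde{A}$ and the interface terms $\widetilde{B}_1$. They differ only in the boundary terms, so for all $\boldsymbol{W}=(p,\boldsymbol{u})$ and $\boldsymbol{V}=(\varphi,\boldsymbol{\psi})$ in $\mathscr{H}^1_\mathrm{qp}(\Omega)$,
\begin{align*}
	A(\boldsymbol{W},\boldsymbol{V})-A^{\mathrm{PML}}(\boldsymbol{W},\boldsymbol{V})
	=\left\langle\left(\mathscr{T}_+^{\mathrm{PML}}-\mathscr{T}_+\right)p,\varphi\right\rangle_{\Gamma_+}
	+\left\langle\left(\mathscr{T}_-^{\mathrm{PML}}-\mathscr{T}_-\right)\boldsymbol{u},\boldsymbol{\psi}\right\rangle_{\Gamma_-}.
\end{align*}
We bound each term with (\ref{DtNerror1})--(\ref{DtNerror2}). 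On the test side we use $\|\varphi\|_{L^2(\Gamma_+)}\le\gamma_1\|\varphi\|_{H^1(\Omega_+)}$ and the analogous bound for $\boldsymbol{\psi}$ from Lemma \ref{trace}. We then bound the component $H^1$ norms by $\|\boldsymbol{V}\|_{\mathscr{H}^1_\mathrm{qp}(\Omega)}$, using the norm equivalence from Korn's inequality (constant absorbed or taken as one, as the paper implicitly does). This gives
\begin{align*}
	|(A-A^{\mathrm{PML}})(\boldsymbol{W},\boldsymbol{V})|\le \left(F_1\gamma_1\|p\|_{L^2(\Gamma_+)}+F_2\gamma_1\|\boldsymbol{u}\|_{L^2(\Gamma_-)^2}\right)\|\boldsymbol{V}\|_{\mathscr{H}^1_\mathrm{qp}(\Omega)}.
\end{align*}
Applying Lemma \ref{trace} once more to $p$ and $\boldsymbol{u}$ then yields $|(A-A^{\mathrm{PML}})(\boldsymbol{W},\boldsymbol{V})|\le \widetilde{F}\gamma_1^2\|\boldsymbol{W}\|\,\|\boldsymbol{V}\|$.

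Well-posedness comes next. Combining (\ref{dtn inf-sup}) with this bound gives
\begin{align*}
	\sup_{\boldsymbol{V}}\frac{|A^{\mathrm{PML}}(\boldsymbol{W},\boldsymbol{V})|}{\|\boldsymbol{V}\|}\ge(\gamma_0-\widetilde{F}\gamma_1^2)\|\boldsymbol{W}\|,
\end{align*}
and the right-hand side is positive under the hypothesis $\widetilde{F}\gamma_1^2<\gamma_0$. This inf-sup estimate gives uniqueness and a closed range. For existence, I would note that $A^{\mathrm{PML}}$ is a compact perturbation of $A$: the difference operators of $\mathscr{T}_\pm^{\mathrm{PML}}-\mathscr{T}_\pm$ have exponentially decaying symbols, so they are smoothing. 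Hence $A^{\mathrm{PML}}$ also satisfies a G\aa rding inequality, and the Fredholm alternative applies. Alternatively, the same perturbation argument applied to the adjoint form gives the adjoint inf-sup, and Babu\v{s}ka--Lax--Milgram yields existence directly. I expect this existence/adjoint step to be the only point needing care, and I would go with the adjoint inf-sup, since the bound on $A-A^{\mathrm{PML}}$ is symmetric in its two arguments.

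Finally, for the error estimate I use Galerkin orthogonality between the two problems. Both right-hand sides equal $L(\boldsymbol{V})$, so
\begin{align*}
	A(\boldsymbol{U}-\boldsymbol{U}^{\mathrm{PML}},\boldsymbol{V})=L(\boldsymbol{V})-A(\boldsymbol{U}^{\mathrm{PML}},\boldsymbol{V})=(A^{\mathrm{PML}}-A)(\boldsymbol{U}^{\mathrm{PML}},\boldsymbol{V}).
\end{align*}
Applying the first displayed bound with $\boldsymbol{W}=\boldsymbol{U}^{\mathrm{PML}}$, this time without the second trace step, dividing by $\|\boldsymbol{V}\|$ and taking the supremum gives exactly (\ref{4.7}). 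The left side of (\ref{4.7}) is the norm defined there; by (\ref{dtn inf-sup}) it also dominates $\gamma_0\|\boldsymbol{U}-\boldsymbol{U}^{\mathrm{PML}}\|$.
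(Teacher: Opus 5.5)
Your proposal is correct and follows the paper's proof. You bound $A-A^{\mathrm{PML}}$ via Lemmas~\ref{PML-DtNerror} and~\ref{trace}, perturb the inf-sup condition (\ref{dtn inf-sup}), and estimate $A(\boldsymbol{U}-\boldsymbol{U}^{\mathrm{PML}},\boldsymbol{V})=(A^{\mathrm{PML}}-A)(\boldsymbol{U}^{\mathrm{PML}},\boldsymbol{V})$. Your version is in fact more careful than the paper's in two places. First, you take the supremum over $\boldsymbol{V}$, whereas the paper uses (\ref{dtn inf-sup}) as if $|A(\boldsymbol{U},\boldsymbol{V})|\ge\gamma_0\|\boldsymbol{U}\|\,\|\boldsymbol{V}\|$ held for every $\boldsymbol{V}$. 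Second, you supply the existence step that a one-sided bound does not give: the adjoint inf-sup, which holds for $A$ with the same $\gamma_0$ because $A$ is an isomorphism.

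One small fix. Applying Lemma~\ref{trace} to your already-summed intermediate bound only gives the constant $\sqrt{2}\,\widetilde{F}\gamma_1^2$, since $\|p\|_{H^1}+\|\boldsymbol{u}\|_{H^1}\le\sqrt{2}\|\boldsymbol{W}\|$. To get exactly $\widetilde{F}\gamma_1^2$, instead apply Lemma~\ref{trace} to both factors of $F_1\|p\|_{L^2(\Gamma_+)}\|\varphi\|_{L^2(\Gamma_+)}$ and of $F_2\|\boldsymbol{u}\|_{L^2(\Gamma_-)^2}\|\boldsymbol{\psi}\|_{L^2(\Gamma_-)^2}$, and then use Cauchy--Schwarz.
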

\begin{proof}
	In order to demonstrate that the variational problem (\ref{PML varproblem}) has a unique weak solution, we only need to establish the coercivity of the sesquilinear form $A^{\mathrm{PML}}$ in (\ref{PML varleft}). Based on the definition of $A(\cdot,\cdot)$ and $A^{\mathrm{PML}}(\cdot,\cdot)$, Lemmas \ref{trace} and \ref{PML-DtNerror}, we have
	\begin{align*}
		|A^{\mathrm{PML}}(\boldsymbol{U}, \boldsymbol{V})| &\geq|A(\boldsymbol{U}, \boldsymbol{V})| - \left|\left\langle\left(\mathscr{T}_+^{\mathrm{PML}}-\mathscr{T}_+\right) p^\mathrm{sc}, \varphi\right\rangle_{\Gamma_-}\right| - 
		\left|\left\langle\left(\mathscr{T}_-^{\mathrm{PML}}-\mathscr{T}_-\right) \boldsymbol{u}, \boldsymbol{\psi}\right\rangle_{\Gamma_-}\right| \\
		&\geq|A(\boldsymbol{U}, \boldsymbol{V})| - F_1\|p^\mathrm{sc}\|_{L^2(\Gamma_+)}\|\varphi\|_{L^2(\Gamma_+)}- F_2\|\boldsymbol{u}\|_{L^2(\Gamma_-)^2}\|\boldsymbol{\psi}\|_{L^2(\Gamma_-)^2}\\
		&\geq|A(\boldsymbol{U}, \boldsymbol{V})| - F_1\gamma_1^2\|p^\mathrm{sc}\|_{H^1(\Omega_+)}\|\varphi\|_{H^1(\Omega_+)}- F_2\gamma_1^2\|\boldsymbol{u}\|_{H^1(\Omega_-)^2}\|\boldsymbol{\psi}\|_{H^1(\Omega_-)^2}\\
		&\geq|A(\boldsymbol{U}, \boldsymbol{V})| - \widetilde{F}\gamma_1^2\|\boldsymbol{U}\|_{\mathscr{H}^1_\mathrm{qp}(\Omega)}\|\boldsymbol{V}\|_{\mathscr{H}^1_\mathrm{qp}(\Omega)}.
	\end{align*}
	Combining inequality (\ref{dtn inf-sup}) and the assumption $\widetilde{F}\gamma_1^2<\gamma_0$ yields
	\begin{align*}
		|A^{\mathrm{PML}}(\boldsymbol{U}, \boldsymbol{V})|
		&\geq \gamma_0 \|\boldsymbol{U}\|_{\mathscr{H}^1_\mathrm{qp}(\Omega)}\|\boldsymbol{V}\|_{\mathscr{H}^1_\mathrm{qp}(\Omega)}- \widetilde{F}\gamma_1^2\|\boldsymbol{U}\|_{\mathscr{H}^1_\mathrm{qp}(\Omega)}\|\boldsymbol{V}\|_{\mathscr{H}^1_\mathrm{qp}(\Omega)}\\
		&=(\gamma_0-\widetilde{F}\gamma_1^2)\|\boldsymbol{U}\|_{\mathscr{H}^1_\mathrm{qp}(\Omega)}\|\boldsymbol{V}\|_{\mathscr{H}^1_\mathrm{qp}(\Omega)}.
	\end{align*}
	Next, we turn to estimate (\ref{4.7}). By utilizing (\ref{DtN varproblem}), (\ref{PML varproblem}), Lemma \ref{trace} and Lemma \ref{PML-DtNerror}, one can get
	\begin{align*}
		A\left(\boldsymbol{U}-\boldsymbol{U}^{\mathrm{PML}}, \boldsymbol{V}\right)
		&=A\left(\boldsymbol{U},\boldsymbol{V}\right)-A\left(\boldsymbol{U}^{\mathrm{PML}}, \boldsymbol{V}\right)\\
		&=A\left(\boldsymbol{U},\boldsymbol{V}\right) 
		- A^{\mathrm{PML}}\left(\boldsymbol{U}^{\mathrm{PML}}, \boldsymbol{V}\right) 
		+ A^{\mathrm{PML}}\left(\boldsymbol{U}^{\mathrm{PML}}, \boldsymbol{V}\right) -A\left(\boldsymbol{U}^{\mathrm{PML}}, \boldsymbol{V}\right)\\
		&=\left\langle\left(\mathscr{T}_+-\mathscr{T}_+^{\mathrm{PML}}\right) p^{\mathrm{sc},\mathrm{PML}}, \varphi\right\rangle_{\Gamma_+}+
		\left\langle\left(\mathscr{T}_--\mathscr{T}_-^{\mathrm{PML}}\right) \boldsymbol{u}^{\mathrm{PML}}, \boldsymbol{\psi}\right\rangle_{\Gamma_-}\\
		&\leq F_1\|p^{\mathrm{sc},\mathrm{PML}}\|_{L^2(\Gamma_+)}\|\varphi\|_{L^2(\Gamma_+)}+ F_2\|\boldsymbol{u}^{\mathrm{PML}}\|_{L^2(\Gamma_-)^2}\|\boldsymbol{\psi}\|_{L^2(\Gamma_-)^2}\\
		&\leq F_1 \gamma_1 \|p^{\mathrm{sc},\mathrm{PML}}\|_{L^2(\Gamma_+)}\|\varphi\|_{H^1(\Omega_+)} + F_2 \gamma_1 \|\boldsymbol{u}^{\mathrm{PML}}\|_{L^2(\Gamma_-)^2}\|\boldsymbol{\psi}\|_{H^1(\Omega_-)^2}\\
		&\leq (F_1 \gamma_1 \|p^{\mathrm{sc},\mathrm{PML}}\|_{L^2(\Gamma_+)} + F_2 \gamma_1 \|\boldsymbol{u}^{\mathrm{PML}}\|_{L^2(\Gamma_-)^2})\|\boldsymbol{V}\|_{\mathscr{H}^1_\mathrm{qp}(\Omega)},
	\end{align*}
	which completes the proof.
\end{proof}
It follows from Theorem \ref{theorem1} that the PML approximation error can be significantly reduced by increasing the PML parameters $\sigma_j$ or $\delta_j$.

\section{Discrete Problem} \label{SECTION4}
This section focuses on the FE discretization of the PML variational problem (\ref{trun PMLvarproblem}) and investigating the corresponding residual a posteriori error estimate.

\subsection{FEM approximation}
Let $\mathscr{M}_{h}=\mathscr{M}_{h, +} \cup\mathscr{M}_{h, -}$, where $\mathscr{M}_{h, +}$ and $\mathscr{M}_{h, -}$  are regular triangulations of $\mathrm{D}_+$ and $\mathrm{D}_-$, respectively.  Assume that any triangle element $T$ must be entirely contained within the domain $\overline{\Omega_m^{\mathrm{PML}}}$ ($m=+,-$) or $\overline{\Omega}$. Let $\mathcal{S}_h^+ \subset H_{\mathrm{qp}}^1\left(\mathrm{D}_+\right)$ and $\mathcal{S}_h^-\subset H_{\mathrm{qp}}^1\left(\mathrm{D}_-\right)^2$ be the conforming finite element spaces, i.e.,
$$
\begin{aligned}
	& \mathcal{S}_h^+:=\left\{p_h \in C\left(\overline{\Omega}_+\right):\left.p_h\right|_{T} \in P_k(T),~\forall~T \in \mathscr{M}_{h, +}, ~p_h\left(\Lambda, x_2\right)=p_h\left(0, x_2\right) e^{\mathrm{i} \alpha \Lambda}\right\}, \\
	&\mathcal{S}_h^-:=\left\{\boldsymbol{u}_h \in C\left(\overline{\Omega}_-\right):\left.\boldsymbol{u}_h\right|_{T} \in P_{k}(T)^2,~\forall~ T \in \mathscr{M}_{h, -},~ \boldsymbol{u}_h\left(\Lambda, x_2\right)=\boldsymbol{u}_h\left(0, x_2\right) e^{\mathrm{i} \alpha \Lambda}\right\},
\end{aligned}
$$
where $P_k(T)$ denotes the set of all polynomials of degree no more than $k$ $\left(k \in \mathbb{N}^{+}\right)$. Let $\mathscr{H}_h^1(\mathrm{D}):=\mathcal{S}_h^{+} \times \mathcal{S}_h^{-} \subset \mathscr{H}^1(\mathrm{D})$ and $\stackrel{\circ}{\mathscr{H}_h}(\mathrm{D})=\mathscr{H}_h^1(\mathrm{D}) \cap \mathscr{H}_{0,\mathrm{qp}}^1(\mathrm{D})$. The FE approximation to \eqref{trun PMLvarproblem} is formulated as: Given $p^\mathrm{in}$, seek $\hat{\boldsymbol{U}}_h \in \mathscr{H}_h^1(\mathrm{D})$ such that $\hat{p}^{\mathrm{sc}}_h = 0 \text { on } \Gamma_+^{\mathrm{PML}}$, $\hat{\boldsymbol{u}}_h =0   \text { on }  \Gamma_-^{\mathrm{PML}}$, and
\begin{align}\label{trun APPLvarproblem}
	B_\mathrm{D}(\hat{\boldsymbol{U}}_h, \boldsymbol{V}_h)=L(\boldsymbol{V}_h), \quad \forall~\boldsymbol{V}_h=(\varphi_h, \boldsymbol{\psi}_h) \in \stackrel{\circ}{\mathscr{H}_h}(\mathrm{D}).
\end{align}
where $B_\mathrm{D}(\cdot,\cdot)$ is a sesquilinear form defined by (\ref{trun PMLvarleft}).

This paper aims to develop a residual-based a posteriori error estimate as well as the adaptive FE algorithm. Therefore, we assume that the variational problem (\ref{trun APPLvarproblem}) is uniquely solvable.

\subsection{Error representation}
In this subsection, we mainly introduce an error representation formula, which is used to facilitate the a posteriori error analysis in Theorem \ref{theorem main}. For any $\boldsymbol{V}=(\varphi,\boldsymbol{\psi}) \in \mathscr{H}^1_\mathrm{qp}\left(\Omega\right)$,  let $\tilde{\boldsymbol{V}}$ be the extension of $\boldsymbol{V}$ such that $\tilde{\boldsymbol{V}}=\boldsymbol{V}$ in $\Omega$ and $\tilde{\boldsymbol{V}}$ satisfies the following boundary value problem
\begin{align}\label{extenstion}
	\Delta_{\hat{\boldsymbol{x}}}  \overline{\tilde{\varphi}} + \kappa^2 \overline{\tilde{\varphi}}=0 & \quad\text { in } \Omega_+^{\mathrm{PML}}, \notag\\
	\Delta^*_{\hat{\boldsymbol{x}}}\overline{\tilde{\boldsymbol{\psi}}} +  \omega^2\rho \overline{\tilde{\boldsymbol{\psi}}} = 0 &\quad \text { in } \Omega^{\mathrm{PML}}_-, \notag\\
	\tilde{\varphi}\left(x_1, h_1\right)=\varphi\left(x_1, h_1\right) & \quad \text { on } \Gamma_+, \\
	\tilde{\boldsymbol{\psi}}\left(x_1, h_2\right)=\boldsymbol{\psi} \left(x_1, h_2\right) & \quad\text { on } \Gamma_-, \notag\\
	\tilde{\varphi}\left(x_1, h_1+\delta_1\right)=0 &\quad \text { on } \Gamma_+^{\mathrm{PML}},\notag \\
	\tilde{\boldsymbol{\psi}}\left(x_1, h_2-\delta_2\right)=0 &\quad \text { on } \Gamma_-^{\mathrm{PML}}.\notag
\end{align}
We give the following two lemmas, which will be used in the subsequent a posteriori estimate.
\begin{lemma}\label{lemma5}
	For any $(p,\boldsymbol{u}) \in \mathscr{H}^1_\mathrm{qp}\left(\Omega\right)$, $(\varphi, \boldsymbol{\psi}) \in \mathscr{H}^1_\mathrm{qp}\left(\Omega\right)$, one can obtain
	\begin{align*}
		\int_{\Gamma_+} \mathscr{T}_+^{\mathrm{PML}} p  \overline{\varphi} \mathrm{~d} s &=\int_{\Gamma_+} p  \partial_{x_2} \overline{\tilde{\varphi}} \mathrm{~d} s,\\
		\int_{\Gamma_-} \mathscr{T}_-^{\mathrm{PML}} \boldsymbol{u} \cdot \overline{\boldsymbol{\psi}} \mathrm{~d} s &=\int_{\Gamma_-} \boldsymbol{u} \cdot \mathscr{B} \overline{\tilde{\boldsymbol{\psi}}} \mathrm{~d} s.
	\end{align*}
\end{lemma}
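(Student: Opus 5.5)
The plan is to prove both identities by Green's and Betti's formulas in the PML layers rather than by comparing Fourier symbols entry by entry. The point is that the PML sesquilinear forms become \emph{symmetric bilinear} forms once the second argument is conjugated. Throughout, $\hat p$ and $\hat{\boldsymbol u}$ denote the PML extensions of $p|_{\Gamma_+}$ and $\boldsymbol u|_{\Gamma_-}$ into $\Omega_\pm^{\mathrm{PML}}$. These are constructed as in Lemma 1: they solve $\mathcal L_1\hat p=0$ and $\mathcal L_2\hat{\boldsymbol u}=0$ and vanish on $\Gamma_\pm^{\mathrm{PML}}$. By construction (\ref{p PML-TBC}) and (\ref{u PML-TBC}),
\[
\mathscr T_+^{\mathrm{PML}}p=\partial_{x_2}\hat p
\quad\text{and}\quad
\mathscr T_-^{\mathrm{PML}}\boldsymbol u=\mathscr B\hat{\boldsymbol u}
\qquad\text{on }\Gamma_\pm .
\]
Similarly, (\ref{extenstion}) says that $\overline{\tilde\varphi}$ and $\overline{\tilde{\boldsymbol\psi}}$ solve the same PML equations, in the form $\mathcal L_1\overline{\tilde\varphi}=0$ and $\mathcal L_2\overline{\tilde{\boldsymbol\psi}}=0$, with zero data on the outer PML boundaries.

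\textbf{Acoustic identity.} I will multiply $\mathcal L_1\hat p=0$ by $\overline{\tilde\varphi}$ and integrate by parts over $\Omega_+^{\mathrm{PML}}$, and then do the same with the roles of $\hat p$ and $\overline{\tilde\varphi}$ exchanged. Both volume integrals equal
\[
\int_{\Omega_+^{\mathrm{PML}}}\bigl(\mathbb A\nabla\hat p\cdot\nabla\overline{\tilde\varphi}-\kappa^2 s\,\hat p\,\overline{\tilde\varphi}\bigr)\,\mathrm d\boldsymbol x ,
\]
because $\mathbb A$ is a symmetric (complex) matrix, so the resulting boundary terms must agree. The boundary terms are handled as follows:
\begin{itemize}
\item On $\Gamma_+^{\mathrm{PML}}$ both functions vanish, so there is no contribution.
\item On the lateral sides $x_1=0,\Lambda$, the function $\hat p$ is $\alpha$-quasi-periodic and $\overline{\tilde\varphi}$ is $(-\alpha)$-quasi-periodic. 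Hence products such as $s\,\partial_{x_1}\hat p\,\overline{\tilde\varphi}$ are $\Lambda$-periodic, and the contributions from the two sides cancel.
\item On $\Gamma_+$ we have $s=1$ by continuity of $s$, so the conormal derivative $\mathbb A\nabla\cdot\boldsymbol\nu$ reduces to $-\partial_{x_2}$.
\end{itemize}
This gives $\int_{\Gamma_+}\partial_{x_2}\hat p\,\overline{\tilde\varphi}\,\mathrm ds=\int_{\Gamma_+}\hat p\,\partial_{x_2}\overline{\tilde\varphi}\,\mathrm ds$. Inserting $\hat p=p$ and $\tilde\varphi=\varphi$ on $\Gamma_+$ yields the first claim. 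As a cross-check, I will also verify the identity in Fourier modes. The conjugate $\overline\varphi$ has modes $\overline{\varphi_n}\,e^{-\mathrm i\alpha_n x_1}$, and the symbol $\mathrm i\beta_n\coth(-\mathrm i\beta_n\eta_1)$ depends only on $\alpha_n^2$. Therefore both sides equal $\Lambda\sum_n \mathrm i\beta_n\coth(-\mathrm i\beta_n\eta_1)\,p_n\overline{\varphi_n}$.

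\textbf{Elastic identity.} I will apply the first Betti formula for $\mathcal L_2$ in $\Omega_-^{\mathrm{PML}}$ to the pair $(\hat{\boldsymbol u},\overline{\tilde{\boldsymbol\psi}})$, and then to the pair in the reverse order. The volume form is
\[
\int_{\Omega_-^{\mathrm{PML}}}\bigl(\mathcal S_{\lambda,\mu}(\hat{\boldsymbol u},\tilde{\boldsymbol\psi})-\omega^2\rho s\,\hat{\boldsymbol u}\cdot\overline{\tilde{\boldsymbol\psi}}\bigr)\,\mathrm d\boldsymbol x .
\]
Inspecting the definition of $\mathcal S_{\lambda,\mu}$, the map $(\boldsymbol u,\boldsymbol w)\mapsto\mathcal S_{\lambda,\mu}(\boldsymbol u,\overline{\boldsymbol w})$ is a symmetric bilinear form. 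The diagonal terms are symmetric, and the $\lambda$ and $\mu$ cross terms are exchanged with each other when $\boldsymbol u$ and $\boldsymbol w$ are swapped. The boundary terms are handled in the same way as in the acoustic case:
\begin{itemize}
\item The terms on $\Gamma_-^{\mathrm{PML}}$ vanish.
\item The lateral terms cancel by the opposite quasi-periodicities.
\item On $\Gamma_-$, where $s=1$ and the outward normal of $\Omega_-^{\mathrm{PML}}$ is $(0,1)^\top$, the conormal traction is, up to sign, the operator $\mathscr B$ of (\ref{uboun oper}).
\end{itemize}
Equating the two Betti identities gives $\int_{\Gamma_-}\mathscr B\hat{\boldsymbol u}\cdot\overline{\tilde{\boldsymbol\psi}}\,\mathrm ds=\int_{\Gamma_-}\hat{\boldsymbol u}\cdot\mathscr B\overline{\tilde{\boldsymbol\psi}}\,\mathrm ds$. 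Since $\mathscr B\hat{\boldsymbol u}=\mathscr T_-^{\mathrm{PML}}\boldsymbol u$ and the traces agree on $\Gamma_-$, this is the second claim.

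\textbf{Main obstacle.} The step I expect to be hardest is the bookkeeping on $\Gamma_-$. I must check carefully that the conormal derivative generated by $\mathcal S_{\lambda,\mu}$ at $x_2=h_2$ is exactly $\pm\mathscr B$, including the $\lambda$ and $\mu$ cross terms, and that the sign is the same for both pairings. I will also need to justify the integrations by parts for merely $H^1$ data. For this I plan to argue mode by mode: truncate the Fourier series, for which the PML solutions are smooth in $x_2$ via (\ref{phisolution}) and (\ref{4.5}), and then pass to the limit. The limit is justified by boundedness of $\mathscr T_\pm^{\mathrm{PML}}$ from $H^{1/2}$ to $H^{-1/2}$ together with Lemma \ref{trace}. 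Should the variational argument stall, the fallback is a direct symbol check, namely $\hat W^{(n)}(-\alpha_n)^{\top}=\hat W^{(n)}(\alpha_n)$: the diagonal entries are even in $\alpha_n$, and the required identity for the off-diagonal entries is $\hat w_{12}^{(n)}(\alpha_n)=-\hat w_{21}^{(n)}(-\alpha_n)$.
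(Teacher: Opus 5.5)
Your proposal is correct and is essentially the paper's argument: the paper likewise extends $p$ and $\boldsymbol{u}$ into $\Omega_\pm^{\mathrm{PML}}$ by PML solutions $\hat z,\hat{\boldsymbol{w}}$ vanishing on $\Gamma_\pm^{\mathrm{PML}}$ and applies Green's (resp.\ Betti's) formula twice, which is exactly your symmetric-bilinear-form comparison, and your handling of the lateral and outer boundary terms and of $H^1$ regularity is more careful than the paper's. One small sign slip is in your optional fallback: $\hat W^{(n)}(-\alpha_n)^{\top}=\hat W^{(n)}(\alpha_n)$ requires $\hat w_{12}^{(n)}(\alpha_n)=\hat w_{21}^{(n)}(-\alpha_n)$, without the extra minus sign you wrote (equivalently $\hat w_{12}^{(n)}=-\hat w_{21}^{(n)}$ at the same $\alpha_n$, since both off-diagonal entries are odd in $\alpha_n$).
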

\begin{proof}
	To begin with, we introduce $\hat{z} \in H_{\mathrm{qp}}^1\left(\Omega_+^{\mathrm{PML}}\right)$ and $\hat{\boldsymbol{w}} \in H_{\mathrm{qp}}^1\left(\Omega_-^{\mathrm{PML}}\right)^2$ satisfying
	\begin{align*}
		\Delta_{\hat{\boldsymbol{x}}}  \hat{z} + \kappa^2 \hat{z}=0 & \quad\text { in } \Omega_+^{\mathrm{PML}}, \notag\\
		\Delta^*_{\hat{\boldsymbol{x}}}\hat{\boldsymbol{w}} +  \omega^2\rho \hat{\boldsymbol{w}} = 0 &\quad \text { in } \Omega^{\mathrm{PML}}_-, \notag\\
		\hat{z}\left(x_1, h_1\right)=p \left(x_1, h_1\right) & \quad \text { on } \Gamma_+, \\
		\hat{\boldsymbol{w}}\left(x_1, h_2\right)=\boldsymbol{u} \left(x_1, h_2\right) & \quad\text { on } \Gamma_-, \notag\\
		\hat{z}\left(x_1, h_1+\delta_1\right)=0 &\quad \text { on } \Gamma_+^{\mathrm{PML}},\notag \\
		\hat{\boldsymbol{w}}\left(x_1, h_2-\delta_2\right)=0 &\quad \text { on } \Gamma_-^{\mathrm{PML}}.\notag
	\end{align*}
	It follows from the definitions of $\mathscr{T}_+^\mathrm{PML}$ and $\mathscr{T}_-^{\mathrm{PML}}$ that
	\begin{align*}
		\mathscr{T}_+^{\mathrm{PML}} p=\partial_{x_2}{\hat{z}}  \quad \text { on } \Gamma_+,\\
		\mathscr{T}_-^{\mathrm{PML}} \boldsymbol{u}=\mathscr{B} \hat{\boldsymbol{w}} \quad \text { on } \Gamma_-.
	\end{align*}
	By combining Green's formula and the extension, we have
	\begin{align*}
		\int_{\Gamma_+} p  \partial_{x_2} \overline{\tilde{\varphi}} \mathrm{~d} s
		& = \int_{\Gamma_+} \hat{z}  \partial_{x_2} \overline{\tilde{\varphi}} \mathrm{~d} s  = -\int_{\Omega_+^{\mathrm{PML}}}\left[\nabla_{\hat{\boldsymbol{x}}} \overline{\tilde{\varphi}} \cdot \nabla_{\hat{\boldsymbol{x}}}  \hat{z}-\kappa^2\overline{\tilde{\varphi}} \hat{z}\right] \mathrm{~d} \boldsymbol{x} \\
		& =	\int_{\Omega_+^{\mathrm{PML}}} (\Delta_{\hat{\boldsymbol{x}}} \hat{z} + \kappa^2 \hat{z}) \overline{\tilde{\varphi}}\mathrm{~d} \boldsymbol{x} + \int_{\Gamma_+} \partial_{x_2}{\hat{z}} \overline{\tilde{\varphi}} \mathrm{~d} s\\
		&=\int_{\Gamma_+} \partial_{x_2}{\hat{z}}  \overline{\tilde{\varphi}} \mathrm{~d} s=\int_{\Gamma_+} \mathscr{T}_+^{\mathrm{PML}} p  \overline{\varphi} \mathrm{~d} s.
	\end{align*}
	Similarly, using the first Betti's formula and the extension yields
	\begin{align*}
		\int_{\Gamma_-} \boldsymbol{u} \cdot \mathscr{B} \overline{\tilde{\boldsymbol{\psi}}} \mathrm{~d} s & =\int_{\Gamma_-} \hat{\boldsymbol{w}} \cdot \mathscr{B} \overline{\tilde{\boldsymbol{\psi}}} \mathrm{~d} s \\
		&= -\int_{\Omega_-^{\mathrm{PML}}}\left[\lambda(\nabla_{\hat{\boldsymbol{x}}} \cdot \overline{\tilde{\boldsymbol{\psi}}})(\nabla_{\hat{\boldsymbol{x}}} \cdot \hat{\boldsymbol{w}})+\frac{\mu}{2}\left(\nabla_{\hat{\boldsymbol{x}}} \overline{\tilde{\boldsymbol{\psi}}}+\nabla_{\hat{\boldsymbol{x}}}  \overline{\tilde{\boldsymbol{\psi}}}^{\top}\right):\left(\nabla_{\hat{\boldsymbol{x}}} \hat{\boldsymbol{w}}+\nabla_{\hat{\boldsymbol{x}}} \hat{\boldsymbol{w}}^{\top}\right)-\omega^2\rho \overline{\tilde{\boldsymbol{\psi}}} \cdot \hat{\boldsymbol{w}}\right] \mathrm{d} \boldsymbol{x} \\
		& =\int_{\Omega_-^{\mathrm{PML}}}\left(\Delta^*_{\hat{\boldsymbol{x}}}\hat{\boldsymbol{w}}+\omega^2\rho \hat{\boldsymbol{w}}\right) \cdot \overline{\tilde{\boldsymbol{\psi}}} \mathrm{~d} \boldsymbol{x}+\int_{\Gamma_-} \mathscr{B} \hat{\boldsymbol{w}} \cdot \overline{\tilde{\boldsymbol{\psi}}} \mathrm{~d} s \\
		& =\int_{\Gamma_-} \mathscr{B} \hat{\boldsymbol{w}} \cdot \overline{\tilde{\boldsymbol{\psi}}} \mathrm{~d} s=\int_{\Gamma_-} \mathscr{T}_-^{\mathrm{PML}} \boldsymbol{u} \cdot \overline{\tilde{\boldsymbol{\psi}}} \mathrm{~d} s,
	\end{align*}
	which completes the proof.
\end{proof}
\begin{lemma}\label{lemma6}
	For any $\boldsymbol{V}=(\varphi,\boldsymbol{\psi}) \in \mathscr{H}^1_\mathrm{qp}\left(\Omega\right)$, let $\tilde{\boldsymbol{V}} \in \mathscr{H}_{0,\mathrm{qp}}^1(\mathrm{D})$ be the extension of $\boldsymbol{V}$ which satisfies (\ref{extenstion}). Then there holds
	\begin{align*}
		& \|\nabla \tilde{\varphi}\|_{L^2\left(\Omega_+^{\mathrm{PML}}\right)} \leq \gamma_1 C_1\|\varphi\|_{H^1(\Omega_+)},\\
		& \|\nabla \tilde{\boldsymbol{\psi}}\|_{F\left(\Omega_-^{\mathrm{PML}}\right)} \leq \gamma_1 C_2\|\boldsymbol{\psi}\|_{H^1(\Omega_-)^2},
	\end{align*}
	where $C_1>0$ and $C_2>0$ are 
	constants, and the Frobenius norm of the Jacobian matrix $\nabla \boldsymbol{\psi}$ is defined by
	\begin{align*}
		\|\nabla \boldsymbol{\psi}\|_{F(R)}=\left(\sum_{j=1}^2 \int_R\left|\nabla \psi_j\right|^2 \mathrm{d} \boldsymbol{x}\right)^{1 / 2}, \quad \text { for any subdomain } R \subset \mathbb{R}^2. 
	\end{align*}
\end{lemma}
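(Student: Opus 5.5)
The approach is to write the extension explicitly mode by mode, using the Fourier/Rayleigh structure already exploited for the PML transparent boundary conditions, and then to bound each mode uniformly in $n$.

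\textbf{Acoustic part.} Expand $\varphi(x_1,h_1)=\sum_n \varphi_n(h_1)e^{\mathrm{i}\alpha_n x_1}$. Since $\overline{\tilde\varphi}$ solves the stretched Helmholtz equation with zero Dirichlet data on $\Gamma_+^{\mathrm{PML}}$, the argument giving (\ref{phisolution}) yields
\begin{align*}
\tilde\varphi(x_1,x_2)=\sum_{n\in\mathbb{Z}}\varphi_n(h_1)\,\frac{\sinh\big(-\mathrm{i}\overline{\beta_n}(\eta_1-\zeta(x_2))\big)}{\sinh(-\mathrm{i}\overline{\beta_n}\eta_1)}\,e^{\mathrm{i}\alpha_n x_1},\qquad \zeta(x_2)=\int_{h_1}^{x_2}s(\tau)\,\mathrm{d}\tau ,
\end{align*}
with conjugations placed as needed. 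For $n\notin I$ the ratio of hyperbolic sines is bounded by $C e^{-\Theta_n\Re\zeta}$. For $n\in I$ it is bounded using $\Im\eta_1>0$ and the lower bound $|\sinh(-\mathrm{i}\beta_n\eta_1)|\ge \sinh(\Theta^i\Im\eta_1)>0$. Differentiating in $x_1$ produces a factor $\alpha_n$, and differentiating in $x_2$ produces a factor $\beta_n s(x_2)$, with $|s|$ bounded on the layer. Integrating $|\cdot|^2$ over $x_2\in(h_1,h_1+\delta_1)$ gives, for $n\notin I$, a bound of order $(1+\alpha_n^2)\Theta_n^{-1}\le C(1+\alpha_n^2)^{1/2}$. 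For the finitely many $n\in I$ the bound is simply a constant. Summing with Parseval then gives
\begin{align*}
\|\nabla\tilde\varphi\|_{L^2(\Omega_+^{\mathrm{PML}})}\le C_1\|\varphi\|_{H^{1/2}(\Gamma_+)} ,
\end{align*}
and Lemma \ref{trace} converts this into $\gamma_1C_1\|\varphi\|_{H^1(\Omega_+)}$.

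\textbf{Elastic part.} I use the explicit representation (\ref{4.5}), with $M_n^{(j)},N_n^{(j)}$ replaced by the coefficients computed from the data $\boldsymbol{\psi}_n(h_2)$; these are linear in $\boldsymbol{\psi}_n(h_2)$ through the formulas listed before (\ref{u PML-TBC}). Each mode is a combination of terms $e^{\pm\mathrm{i}\beta^{(j)}_n\int_{x_2}^{h_2}s}$. Its gradient carries factors $\alpha_n$ and $\beta_n^{(j)}s$ multiplying amplitudes $|\alpha_n||M_n^{(j)}|$, etc. The plan is to show that the $2\times 4$ map $\boldsymbol{\psi}_n(h_2)\mapsto(\text{amplitudes})$ is bounded by $C$ times a normalized factor. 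Then the $x_2$-integral of each exponential supplies $\min(\delta_2,\Theta_n^{(j)-1})$ for the evanescent modes, which again gives a per-mode bound $C(1+\alpha_n^2)^{1/2}|\boldsymbol{\psi}_n(h_2)|^2$. Summation and Lemma \ref{trace} then finish this part.

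\textbf{Main obstacle.} The hard step is the uniform-in-$n$ bound on the elastic coefficients. This needs a lower bound on $|\mathcal{X}_n\hat{\mathcal{X}}_n|$ and control of $\varsigma_n^{(j)},\xi_n^{(j)},\vartheta_n$.
\begin{itemize}
\item For large $|n|$: $\mathcal{X}_n=\alpha_n^2+\beta_n^{(1)}\beta_n^{(2)}\sim \tfrac12(\kappa_2^2-\kappa_1^2)\cdot O(1)$ is only bounded below, not growing, while the numerators grow like $\alpha_n^2$. I would therefore not rely on naive bounds. Instead I would group the compressional and shear contributions, as in the cancellation behind (\ref{urexp}), using $\beta_n^{(1)}-\beta_n^{(2)}=O(1/|\alpha_n|)$. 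The quantities $\xi_n^{(j)}$ and $\varsigma_n^{(j)}$ are exponentially small in $\Theta_n\Re\eta_2$, which makes $\hat{\mathcal{X}}_n\approx\mathcal{X}_n$.
\item For the finitely many remaining $n$: I would use non-vanishing of $\hat{\mathcal{X}}_n$, which follows from the assumed unique solvability and the exclusion of $|\alpha_n|=\kappa_j$. This yields a constant.
\end{itemize}
This casework, paralleling \cite[Lemma 3.2]{Jiang17-3}, is where most of the effort lies, and $C_2$ absorbs the resulting constants.
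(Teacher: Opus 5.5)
Your route is the one the paper intends: it omits the proof and defers to the mode-by-mode arguments of \cite[Lemma 4.3]{Chen03} and \cite[Lemma 4.3]{Jiang17-3}. That is, an explicit Fourier representation of the extension, a per-mode bound of order $(1+\alpha_n^2)^{1/2}$, summation to an $H^{1/2}$ trace norm, and Lemma \ref{trace}. Your acoustic part is complete and correct. The elastic part, however, rests on a step that fails as written.

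\textbf{The failing step.} You plan to bound the map $\boldsymbol\psi_n(h_2)\mapsto$ (amplitudes) uniformly in $n$, and then let the $x_2$-integral of \emph{each} exponential supply $\min(\delta_2,1/\Theta_n^{(j)})$. No such uniform bound exists. Write $\boldsymbol\psi_n(h_2)=(\psi_{n,1},\psi_{n,2})^{\top}$. For large $|n|$ the leading part of $M_n^{(1)}$ is $-\mathrm{i}(\alpha_n\psi_{n,1}-\beta_n^{(2)}\psi_{n,2})/\mathcal{X}_n$, and $M_n^{(2)}$ behaves analogously. These reproduce the matrices in \eqref{urexp}, so the compressional and shear parts each have size $\alpha_n^2|\boldsymbol\psi_n(h_2)|$. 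Note also $\mathcal{X}_n\to\frac12(\kappa_1^2+\kappa_2^2)$, not $\frac12(\kappa_2^2-\kappa_1^2)$. Treating the two parts separately therefore gives $|\alpha_n|^5|\boldsymbol\psi_n(h_2)|^2$ per mode.

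\textbf{Where the cancellation lives.} The remedy in your last paragraph (grouping, with $\beta_n^{(1)}-\beta_n^{(2)}=O(|\alpha_n|^{-1})$) is the right idea, but the cancellation is in the $x_2$-profile, not in the coefficients. Set $E_j=e^{\mathrm{i}\beta_n^{(j)}z}$ with $z=\int_{x_2}^{h_2}s(\tau)\,\mathrm{d}\tau$. To leading order the first component is $\psi_{n,1}E_2+\mathcal{X}_n^{-1}(\alpha_n^2\psi_{n,1}-\alpha_n\beta_n^{(2)}\psi_{n,2})(E_1-E_2)$; the second component has the same structure. Moreover $|E_1-E_2|\le|\beta_n^{(1)}-\beta_n^{(2)}|\,|z|\,e^{-\min_j\Theta_n^{(j)}\Re z}$. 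If you now bound $|z|\le|\eta_2|$ and keep your per-exponential factor $1/\Theta_n$, you are still left with $|\alpha_n|^3$. What closes the estimate is keeping the weight. Since $|z|\le\|s\|_{\infty}(h_2-x_2)$ and $\Re z\ge h_2-x_2$,
\[
\int_{h_2-\delta_2}^{h_2}|z|^2e^{-2\Theta\Re z}\,\mathrm{d}x_2\le\frac{\|s\|_{\infty}^2}{4\Theta^3}.
\]
Combining amplitude $\alpha_n^2$, the difference factor $O(|\alpha_n|^{-1})$, one derivative $O(|\alpha_n|)$ and this $\Theta^{-3}$ yields exactly $O(|\alpha_n|)$ per mode. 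The remaining pieces are then harmless:
\begin{itemize}
\item The extra $\partial_{x_2}$-term $\alpha_n^2(\beta_n^{(1)}-\beta_n^{(2)})E_2$ is directly $O(|\alpha_n|)$ after integration.
\item The $N_n^{(j)}$-terms, the corrections in $M_n^{(j)}$ carrying $\varsigma_n^{(j)}$ or $\xi_n^{(j)}$, and $\hat{\mathcal{X}}_n-\mathcal{X}_n$ are polynomial in $\alpha_n$ times $e^{-c|\alpha_n|\delta_2}$, hence uniformly bounded.
\end{itemize}
Your elastic argument should be built around this divided-difference estimate.

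\textbf{Justifying $\hat{\mathcal{X}}_n\neq0$.} For the finitely many small $|n|$, this does not follow from the unique solvability assumed for \eqref{DtN varproblem} or \eqref{trun APPLvarproblem}. It amounts to unique solvability of the $n$-th mode Dirichlet problem in $\Omega_-^{\mathrm{PML}}$. That is precisely what the lemma (and the definition of $\mathscr{T}_-^{\mathrm{PML}}$) presupposes when it refers to \emph{the} extension $\tilde{\boldsymbol V}$, so invoke it on those grounds.
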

\begin{proof}
	The proof follows a methodology analogous to that employed in \cite[Lemma 4.3]{Chen03} and \cite[Lemma 4.3]{Jiang17-3}. For brevity, the corresponding detailed derivations are omitted.
\end{proof}

For notational simplicity, we write $\tilde{\boldsymbol{V}}$ as  $\boldsymbol{V}$ in the sequel, unless stated otherwise. The following lemma gives an error representation formula, which plays a key role in the analysis of the a posteriori estimate.
\begin{lemma}\label{errorformula}
	For any $\boldsymbol{V}=(\varphi,\boldsymbol{\psi}) \in \mathscr{H}^1_\mathrm{qp}\left(\Omega\right)$, which can be extended to be a function in $\mathscr{H}_{0,\mathrm{qp}}^1\left(\mathrm{D}\right)$. Based on (\ref{extenstion}), and $\boldsymbol{V}_h=(\varphi_h,\boldsymbol{\psi}_h) \in \stackrel{\circ}{\mathscr{H}_h}(\mathrm{D})$, there holds
	\begin{align}\label{lemm6}
		A\left(\boldsymbol{U}-\hat{\boldsymbol{U}}_{h}, \boldsymbol{V}\right)=&~ L(\boldsymbol{V}-\boldsymbol{V}_h)
		-B_\mathrm{D}\left(\hat{\boldsymbol{U}}_h,\boldsymbol{V}-\boldsymbol{V}_h\right) \notag \\
		&+ \int_{\Gamma_+}\left(\mathscr{T}_+-\mathscr{T}_+^{\mathrm{PML}}\right)\hat{p}^{\mathrm{sc}}_h \overline{\varphi} \mathrm{~d} s
		+\int_{\Gamma_-}\left(\mathscr{T}_--\mathscr{T}_-^{\mathrm{PML}}\right)\hat{\boldsymbol{u}}_h \cdot \overline{\boldsymbol{\psi}} \mathrm{~d} s.
	\end{align}
\end{lemma}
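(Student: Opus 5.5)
The plan is to split $A(\boldsymbol{U}-\hat{\boldsymbol{U}}_h,\boldsymbol{V})$ into a part handled by the original variational problem and a part handled by the truncated PML problem. The link between the two is Lemma \ref{lemma5}, applied to the extension $\tilde{\boldsymbol{V}}$ defined by (\ref{extenstion}). Throughout, $\hat{\boldsymbol{U}}_h$ stands for its restriction to $\Omega$ whenever $A(\cdot,\cdot)$ is evaluated.

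\emph{Step 1.} Since $\boldsymbol{U}$ solves (\ref{DtN varproblem}), we have $A(\boldsymbol{U},\boldsymbol{V})=L(\boldsymbol{V})$. Next we insert the PML form:
\begin{align*}
A(\hat{\boldsymbol{U}}_h,\boldsymbol{V})=A^{\mathrm{PML}}(\hat{\boldsymbol{U}}_h,\boldsymbol{V})+\int_{\Gamma_+}(\mathscr{T}_+^{\mathrm{PML}}-\mathscr{T}_+)\hat{p}^{\mathrm{sc}}_h\overline{\varphi}\,\mathrm{d}s+\int_{\Gamma_-}(\mathscr{T}_-^{\mathrm{PML}}-\mathscr{T}_-)\hat{\boldsymbol{u}}_h\cdot\overline{\boldsymbol{\psi}}\,\mathrm{d}s .
\end{align*}
This holds directly from the definitions (\ref{DtN varleft}) and (\ref{PML varleft}), because $A$ and $A^{\mathrm{PML}}$ differ only in their boundary operators. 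Therefore
\begin{align*}
A(\boldsymbol{U}-\hat{\boldsymbol{U}}_h,\boldsymbol{V})=L(\boldsymbol{V})-A^{\mathrm{PML}}(\hat{\boldsymbol{U}}_h,\boldsymbol{V})+\int_{\Gamma_+}(\mathscr{T}_+-\mathscr{T}_+^{\mathrm{PML}})\hat{p}^{\mathrm{sc}}_h\overline{\varphi}\,\mathrm{d}s+\int_{\Gamma_-}(\mathscr{T}_--\mathscr{T}_-^{\mathrm{PML}})\hat{\boldsymbol{u}}_h\cdot\overline{\boldsymbol{\psi}}\,\mathrm{d}s .
\end{align*}
What remains is to show $A^{\mathrm{PML}}(\hat{\boldsymbol{U}}_h,\boldsymbol{V})=B_\mathrm{D}(\hat{\boldsymbol{U}}_h,\tilde{\boldsymbol{V}})$, and then to subtract the Galerkin identity (\ref{trun APPLvarproblem}), $B_\mathrm{D}(\hat{\boldsymbol{U}}_h,\boldsymbol{V}_h)=L(\boldsymbol{V}_h)$. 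Since $\tilde{\boldsymbol{V}}\in\mathscr{H}_{0,\mathrm{qp}}^1(\mathrm{D})$ and $L$ only involves $\Gamma$, $L(\boldsymbol{V})=L(\tilde{\boldsymbol{V}})$, so the subtraction gives exactly $L(\boldsymbol{V}-\boldsymbol{V}_h)-B_\mathrm{D}(\hat{\boldsymbol{U}}_h,\boldsymbol{V}-\boldsymbol{V}_h)$.

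\emph{Step 2 (key identity).} The interior parts of $B_\mathrm{D}$ over $\Omega$ coincide with $\widetilde{A}^{\mathrm{PML}}$, because $s\equiv1$ there. The interface terms $\widetilde{B}_1$ also coincide. It therefore suffices to prove the two identities
\begin{align*}
\int_{\Omega_+^{\mathrm{PML}}}\left(\mathbb{A}\nabla\hat{p}^{\mathrm{sc}}_h\cdot\nabla\overline{\tilde\varphi}-\kappa^2 s\,\hat{p}^{\mathrm{sc}}_h\overline{\tilde\varphi}\right)\mathrm{d}\boldsymbol{x}&=-\int_{\Gamma_+}\mathscr{T}_+^{\mathrm{PML}}\hat{p}^{\mathrm{sc}}_h\overline{\varphi}\,\mathrm{d}s,\\
\int_{\Omega_-^{\mathrm{PML}}}\left(\mathcal{S}_{\lambda,\mu}(\hat{\boldsymbol{u}}_h,\tilde{\boldsymbol{\psi}})-\omega^2\rho s\,\hat{\boldsymbol{u}}_h\cdot\overline{\tilde{\boldsymbol{\psi}}}\right)\mathrm{d}\boldsymbol{x}&=-\int_{\Gamma_-}\mathscr{T}_-^{\mathrm{PML}}\hat{\boldsymbol{u}}_h\cdot\overline{\boldsymbol{\psi}}\,\mathrm{d}s.
\end{align*}
For the acoustic identity, I integrate by parts on the left, moving all derivatives onto $\overline{\tilde\varphi}$. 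Since $\overline{\tilde\varphi}$ satisfies $\nabla\cdot(\mathbb{A}\nabla\overline{\tilde\varphi})+\kappa^2 s\overline{\tilde\varphi}=0$ (the stretched Helmholtz equation times $s$), the volume term vanishes. The quasi-periodic lateral boundary contributions cancel, and $\hat{p}^{\mathrm{sc}}_h=0$ on $\Gamma_+^{\mathrm{PML}}$. The only survivor is $-\int_{\Gamma_+}\hat{p}^{\mathrm{sc}}_h\partial_{x_2}\overline{\tilde\varphi}\,\mathrm{d}s$, using $s=1$ on $\Gamma_+$ and the outward normal $-e_2$ of $\Omega_+^{\mathrm{PML}}$. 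Lemma \ref{lemma5} with $p=\hat{p}^{\mathrm{sc}}_h$ then converts this into $-\int_{\Gamma_+}\mathscr{T}_+^{\mathrm{PML}}\hat{p}^{\mathrm{sc}}_h\overline{\varphi}\,\mathrm{d}s$.

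For the elastic identity, I use Betti's formula in stretched coordinates. The form $\mathcal{S}_{\lambda,\mu}$ equals $s$ times the stretched elastic energy, which is symmetric in its two arguments. Hence integrating by parts onto $\overline{\tilde{\boldsymbol{\psi}}}$ produces the volume term $(\Delta^*_{\hat{\boldsymbol{x}}}\overline{\tilde{\boldsymbol{\psi}}}+\omega^2\rho\overline{\tilde{\boldsymbol{\psi}}})s$, which vanishes, plus the term $-\int_{\Gamma_-}\hat{\boldsymbol{u}}_h\cdot\mathscr{B}\overline{\tilde{\boldsymbol{\psi}}}\,\mathrm{d}s$. Here the traction $\mathscr{B}$ of (\ref{uboun oper}) is the conormal operator associated with $\mathcal{S}_{\lambda,\mu}$ on the top side $\Gamma_-$ of $\Omega_-^{\mathrm{PML}}$; the sign is fixed by the same convention used in Lemma \ref{lemma5}. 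The second part of Lemma \ref{lemma5} finishes the argument.

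\emph{Main obstacle.} The hardest step is the elastic integration by parts in Step 2. One must check that the boundary term generated by $\mathcal{S}_{\lambda,\mu}$ on $\Gamma_-$ is exactly $\mathscr{B}$ with the correct sign and conjugation; this requires the unsymmetrized coupling terms $\lambda,\mu$ in $\mathcal{S}_{\lambda,\mu}$ to reproduce the components $-\mu(\partial_{x_2}u_1+\partial_{x_1}u_2)$ and $-(2\mu+\lambda)\partial_{x_2}u_2-\lambda\partial_{x_1}u_1$. One must also justify the regularity needed for Green's formula, since $\hat{\boldsymbol{u}}_h$ is only $H^1$. The latter is handled because all derivatives are moved onto the extension $\tilde{\boldsymbol{\psi}}$, which is smooth inside the layer as the solution of a constant-coefficient (in $x_1$) stretched system. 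Hence only $H^1$ traces of $\hat{\boldsymbol{u}}_h$ are required.
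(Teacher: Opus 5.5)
Your argument is correct. It uses the same ingredients as the paper: Lemma~\ref{lemma5}, integration by parts in the layers against the extension $\tilde{\boldsymbol{V}}$, and the Galerkin equation (\ref{trun APPLvarproblem}). The difference is a leaner decomposition.

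The paper inserts the continuous truncated PML solution $\hat{\boldsymbol{U}}$. It splits $A(\boldsymbol{U}-\hat{\boldsymbol{U}}_h,\boldsymbol{V})=A(\boldsymbol{U}-\hat{\boldsymbol{U}},\boldsymbol{V})+A(\hat{\boldsymbol{U}}-\hat{\boldsymbol{U}}_h,\boldsymbol{V})$ and uses $A(\boldsymbol{U},\boldsymbol{V})=L(\boldsymbol{V})=A^{\mathrm{PML}}(\hat{\boldsymbol{U}},\boldsymbol{V})$ to produce the DtN-difference terms. It then converts $A^{\mathrm{PML}}(\hat{\boldsymbol{U}}-\hat{\boldsymbol{U}}_h,\boldsymbol{V})$ into $B_\mathrm{D}(\hat{\boldsymbol{U}}-\hat{\boldsymbol{U}}_h,\tilde{\boldsymbol{V}})$. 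Finally it needs both $B_\mathrm{D}(\hat{\boldsymbol{U}},\tilde{\boldsymbol{V}})=L(\tilde{\boldsymbol{V}})$ and the Galerkin equation, after which every $\hat{\boldsymbol{U}}$-term cancels.

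You observe instead that the layer identity $A^{\mathrm{PML}}(\boldsymbol{W}|_\Omega,\boldsymbol{V})=B_\mathrm{D}(\boldsymbol{W},\tilde{\boldsymbol{V}})$ holds for any $\boldsymbol{W}$ vanishing on $\Gamma_\pm^{\mathrm{PML}}$. You apply it directly to $\boldsymbol{W}=\hat{\boldsymbol{U}}_h$, so $\hat{\boldsymbol{U}}$ never appears. As a result, your representation does not depend on the existence of $\hat{\boldsymbol{U}}$, which the paper guarantees only under the condition $\widetilde{F}\gamma_1^2<\gamma_0$ of Theorem~\ref{theorem1}. Nor does it depend on the equivalence between (\ref{trun PMLvarproblem}) and (\ref{PML varproblem}). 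It holds whenever $\boldsymbol{U}$ and $\hat{\boldsymbol{U}}_h$ exist.

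The sign you flag as the main obstacle does come out right. With $s=1$ on $\Gamma_-$, the conormal of $\mathcal{S}_{\lambda,\mu}$ for the upward normal of $\Omega_-^{\mathrm{PML}}$ is $[\mu(\partial_{x_2}w_1+\partial_{x_1}w_2),\,(2\mu+\lambda)\partial_{x_2}w_2+\lambda\partial_{x_1}w_1]^{\top}=-\mathscr{B}\boldsymbol{w}$. This matches the convention in Lemma~\ref{lemma5}.

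One small refinement: $\tilde{\boldsymbol{\psi}}$ is smooth inside the layer but generally not up to $\Gamma_-$. So $\mathscr{B}\overline{\tilde{\boldsymbol{\psi}}}$ is only in $H^{-1/2}(\Gamma_-)^2$, and the boundary integrals should be read as duality pairings defined through Betti's formula. Lemma~\ref{lemma5} already requires the same interpretation.
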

\begin{proof}
	By (\ref{DtN varproblem}), (\ref{DtN varleft}), (\ref{PML varproblem}) and (\ref{PML varleft}), one can get
	\begin{align}
		A\left(\boldsymbol{U}-\hat{\boldsymbol{U}}_{h}, \boldsymbol{V}\right)=&~A\left(\boldsymbol{U}-\hat{\boldsymbol{U}}, \boldsymbol{V}\right)
		+ A\left(\hat{\boldsymbol{U}}-\hat{\boldsymbol{U}}_h, \boldsymbol{V}\right) \notag\\
		=&~A\left(\boldsymbol{U}-\hat{\boldsymbol{U}}, \boldsymbol{V}\right)
		+ A\left(\hat{\boldsymbol{U}}-\hat{\boldsymbol{U}}_h, \boldsymbol{V}\right) 
		-A^{\mathrm{PML}}\left(\hat{\boldsymbol{U}}-\hat{\boldsymbol{U}}_h, \boldsymbol{V}\right)
		+A^{\mathrm{PML}}\left(\hat{\boldsymbol{U}}-\hat{\boldsymbol{U}}_h, \boldsymbol{V}\right) \notag\\
		=&~A\left(\boldsymbol{U},\boldsymbol{V}\right)-A\left(\hat{\boldsymbol{U}},\boldsymbol{V}\right)+A^{\mathrm{PML}}\left(\hat{\boldsymbol{U}}-\hat{\boldsymbol{U}}_h, \boldsymbol{V}\right) \notag\\
		&-\int_{\Gamma_+}\left(\mathscr{T}_+-\mathscr{T}_+^{\mathrm{PML}}\right)\left(\hat{p}^{\mathrm{sc}}-\hat{p}^{\mathrm{sc}}_h\right) \overline{\varphi} \mathrm{~d}s -\int_{\Gamma_-}\left(\mathscr{T}_--\mathscr{T}_-^{\mathrm{PML}}\right)\left(\hat{\boldsymbol{u}}-\hat{\boldsymbol{u}}_h\right) \cdot \overline{\boldsymbol{\psi}} \mathrm{~d} s\notag \\
		=&~A\left(\boldsymbol{U},\boldsymbol{V}\right)-A\left(\hat{\boldsymbol{U}},\boldsymbol{V}\right)+A^{\mathrm{PML}}\left(\hat{\boldsymbol{U}},\boldsymbol{V}\right)-A^{\mathrm{PML}}\left(\hat{\boldsymbol{U}}, \boldsymbol{V}\right)
		+A^{\mathrm{PML}}\left(\hat{\boldsymbol{U}}-\hat{\boldsymbol{U}}_h, \boldsymbol{V}\right) \notag\\
		&-\int_{\Gamma_+}\left(\mathscr{T}_+-\mathscr{T}_+^{\mathrm{PML}}\right)\left(\hat{p}^{\mathrm{sc}}-\hat{p}^{\mathrm{sc}}_h\right) \overline{\varphi} \mathrm{~d} s
		-\int_{\Gamma_-}\left(\mathscr{T}_--\mathscr{T}_-^{\mathrm{PML}}\right)\left(\hat{\boldsymbol{u}}-\hat{\boldsymbol{u}}_h\right) \cdot \overline{\boldsymbol{\psi}} \mathrm{~d} s \notag\\
		=&\int_{\Gamma_+}\left(\mathscr{T}_+-\mathscr{T}_+^{\mathrm{PML}}\right)\hat{p}^{\mathrm{sc}} \overline{\varphi} \mathrm{~d} s
		+\int_{\Gamma_-}\left(\mathscr{T}_--\mathscr{T}_-^{\mathrm{PML}}\right)\hat{\boldsymbol{u}} \cdot \overline{\boldsymbol{\psi}} \mathrm{~d} s +A^{\mathrm{PML}}\left(\hat{\boldsymbol{U}}-\hat{\boldsymbol{U}}_h, \boldsymbol{V}\right) \notag\\
		&-\int_{\Gamma_+}\left(\mathscr{T}_+-\mathscr{T}_+^{\mathrm{PML}}\right)\left(\hat{p}^{\mathrm{sc}}-\hat{p}^{\mathrm{sc}}_h\right) \overline{\varphi} \mathrm{~d} s
		-\int_{\Gamma_-}\left(\mathscr{T}_--\mathscr{T}_-^{\mathrm{PML}}\right)\left(\hat{\boldsymbol{u}}-\hat{\boldsymbol{u}}_h\right) \cdot \overline{\boldsymbol{\psi}} \mathrm{~d} s \notag\\
		=&\int_{\Gamma_+}\left(\mathscr{T}_+-\mathscr{T}_+^{\mathrm{PML}}\right)\hat{p}^{\mathrm{sc}}_h \overline{\varphi} \mathrm{~d} s
		+\int_{\Gamma_-}\left(\mathscr{T}_--\mathscr{T}_-^{\mathrm{PML}}\right)\hat{\boldsymbol{u}}_h \cdot \overline{\boldsymbol{\psi}} \mathrm{~d} s +A^{\mathrm{PML}}\left(\hat{\boldsymbol{U}}-\hat{\boldsymbol{U}}_h, \boldsymbol{V}\right).\label{4.2.1}
	\end{align}
	Combining equation (\ref{PML varleft}) and Lemma \ref{lemma5} yields
	\begin{align*}
		A^{\mathrm{PML}}\left(\hat{\boldsymbol{U}}-\hat{\boldsymbol{U}}_h,\boldsymbol{V}\right)
		=&~B_{\Omega}\left(\hat{\boldsymbol{U}}-\hat{\boldsymbol{U}}_h, \boldsymbol{V}\right) -\int_{\Gamma_+}\mathscr{T}_+^{\mathrm{PML}}\left(\hat{p}^{\mathrm{sc}}-\hat{p}^{\mathrm{sc}}_h\right) \overline{\varphi} \mathrm{~d} s
		-\int_{\Gamma_-}\mathscr{T}_-^{\mathrm{PML}}\left(\hat{\boldsymbol{u}}-\hat{\boldsymbol{u}}_h\right) \cdot \overline{\boldsymbol{\psi}} \mathrm{~d} s\\
		=&~B_{\Omega}\left(\hat{\boldsymbol{U}}-\hat{\boldsymbol{U}}_h, \boldsymbol{V}\right) 
		- \int_{\Gamma_+}\left(\hat{p}^{\mathrm{sc}}-\hat{p}^{\mathrm{sc}}_h\right) \partial_{x_2} \overline{\varphi} \mathrm{~d} s
		- \int_{\Gamma_-} \left(\hat{\boldsymbol{u}}-\hat{\boldsymbol{u}}_h\right) \cdot \mathscr{B} \overline{\boldsymbol{\psi}} \mathrm{~d} s.
	\end{align*}
	Due to $\mathcal{L}_1 \overline{\varphi}=0$ in $\Omega^{\mathrm{PML}}_+$ and $\mathcal{L}_2 \overline{\boldsymbol{\psi}}=0$ in $\Omega^{\mathrm{PML}}_-$, it follows from the Green's formula and the first Betti's formula that
	\begin{align*}
		0 =& -\int_{\Omega^{\mathrm{PML}}_+}	\mathcal{L}_1\overline{\varphi}\left(\hat{p}^{\mathrm{sc}}-\hat{p}^{\mathrm{sc}}_h\right) \mathrm{~d} \boldsymbol{x} \notag\\
		=& \int_{\Omega^{\mathrm{PML}}_+}\left[\mathbb{A}\nabla \overline{\varphi} \cdot \nabla  \left(\hat{p}^{\mathrm{sc}}-\hat{p}^{\mathrm{sc}}_h\right)   
		- \kappa^2 s(x_2) \overline{\varphi}\left(\hat{p}^{\mathrm{sc}}-\hat{p}^{\mathrm{sc}}_h\right)\right] \mathrm{~d} \boldsymbol{x} 
		- \int_{\Gamma_+^\mathrm{PML}}\left(\hat{p}^{\mathrm{sc}}-\hat{p}^{\mathrm{sc}}_h\right) \partial_{\hat{x}_2} \overline{\varphi} \mathrm{~d} s
		+ \int_{\Gamma_+}\left(\hat{p}^{\mathrm{sc}}-\hat{p}^{\mathrm{sc}}_h\right) \partial_{x_2} \overline{\varphi} \mathrm{~d} s
	\end{align*}
	and 
	\begin{align*}
		0=&-\int_{\Omega^{\mathrm{PML}}_-}\mathcal{L}_2\overline{\boldsymbol{\psi}}\cdot\left(\hat{\boldsymbol{u}}-\hat{\boldsymbol{u}}_h\right)   \mathrm{~d} \boldsymbol{x} \notag\\
		= &\int_{\Omega^{\mathrm{PML}}_-}\left[\mathcal{S}_{\lambda, \mu}(\hat{\boldsymbol{u}}-\hat{\boldsymbol{u}}_h, \boldsymbol{\psi}) - \omega^2 s(x_2) \rho \left(\hat{\boldsymbol{u}}-\hat{\boldsymbol{u}}_h\right) \cdot \overline{\boldsymbol{\psi}}\right] \mathrm{~d} \boldsymbol{x}
		- \int_{\Gamma_-^\mathrm{PML}} \left(\hat{\boldsymbol{u}}-\hat{\boldsymbol{u}}_h\right) \cdot \mathscr{B}_{\hat{\boldsymbol{x}}} \overline{\boldsymbol{\psi}} \mathrm{~d} s
		+ \int_{\Gamma_-} \left(\hat{\boldsymbol{u}}-\hat{\boldsymbol{u}}_h\right) \cdot \mathscr{B} \overline{\boldsymbol{\psi}} \mathrm{~d} s.
	\end{align*}
	Furthermore, it follows from (\ref{trun PMLvarproblem}), (\ref{trun APPLvarproblem}) and the above two equalities that
	\begin{align}\label{4.2.2}
		A^{\mathrm{PML}}\left(\hat{\boldsymbol{U}}-\hat{\boldsymbol{U}}_h,\boldsymbol{V}\right)
		=&~ B_{\Omega}\left(\hat{\boldsymbol{U}}-\hat{\boldsymbol{U}}_h, \boldsymbol{V}\right) - \int_{\Gamma_+}\left(\hat{p}^{\mathrm{sc}}-\hat{p}^{\mathrm{sc}}_h\right) \partial_{x_2} \overline{\varphi} \mathrm{~d} s
		- \int_{\Gamma_-} \left(\hat{\boldsymbol{u}}-\hat{\boldsymbol{u}}_h\right) \cdot \mathscr{B} \overline{\boldsymbol{\psi}} \mathrm{~d} s \notag\\
		=&~B_\mathrm{D}\left(\hat{\boldsymbol{U}}-\hat{\boldsymbol{U}}_h, \boldsymbol{V}\right)
		- \int_{\Gamma_+^\mathrm{PML}}\left(\hat{p}^{\mathrm{sc}}-\hat{p}^{\mathrm{sc}}_h\right) \partial_{\hat{x}_2} \overline{\varphi} \mathrm{~d} s
		- \int_{\Gamma_-^\mathrm{PML}} \left(\hat{\boldsymbol{u}}-\hat{\boldsymbol{u}}_h\right) \cdot \mathscr{B}_{\hat{\boldsymbol{x}}} \overline{\boldsymbol{\psi}} \mathrm{~d} s\notag\\
		=&~ B_\mathrm{D}\left(\hat{\boldsymbol{U}}, \boldsymbol{V}\right) -B_\mathrm{D}\left(\hat{\boldsymbol{U}}_h, \boldsymbol{V}\right)
		+ B_\mathrm{D}\left(\hat{\boldsymbol{U}}_h, \boldsymbol{V}_h\right)
		- B_\mathrm{D}\left(\hat{\boldsymbol{U}}_h, \boldsymbol{V}_h\right)
		\notag\\
		&-\int_{\Gamma_+^\mathrm{PML}}\left(\hat{p}^{\mathrm{sc}}-\hat{p}^{\mathrm{sc}}_h\right) \partial_{\hat{x}_2} \overline{\varphi} \mathrm{~d} s
		- \int_{\Gamma_-^\mathrm{PML}} \left(\hat{\boldsymbol{u}}-\hat{\boldsymbol{u}}_h\right) \cdot \mathscr{B}_{\hat{\boldsymbol{x}}} \overline{\boldsymbol{\psi}} \mathrm{~d} s \notag\\
		=& ~L(\boldsymbol{V}) - B_\mathrm{D}\left(\hat{\boldsymbol{U}}_h, \boldsymbol{V}-\boldsymbol{V}_h\right)- L(\boldsymbol{V}_h)
		\notag\\
		&-\int_{\Gamma_+^\mathrm{PML}}\left(\hat{p}^{\mathrm{sc}}-\hat{p}^{\mathrm{sc}}_h\right) \partial_{\hat{x}_2} \overline{\varphi} \mathrm{~d} s
		- \int_{\Gamma_-^\mathrm{PML}} \left(\hat{\boldsymbol{u}}-\hat{\boldsymbol{u}}_h\right) \cdot \mathscr{B}_{\hat{\boldsymbol{x}}} \overline{\boldsymbol{\psi}} \mathrm{~d} s \notag\\
		=&~ L(\boldsymbol{V}-\boldsymbol{V}_h)
		-B_\mathrm{D}\left(\hat{\boldsymbol{U}}_h,\boldsymbol{V}-\boldsymbol{V}_h\right)
		-\int_{\Gamma_+^\mathrm{PML}}\left(\hat{p}^{\mathrm{sc}}-\hat{p}^{\mathrm{sc}}_h\right) \partial_{\hat{x}_2} \overline{\varphi} \mathrm{~d} s
		- \int_{\Gamma_-^\mathrm{PML}} \left(\hat{\boldsymbol{u}}-\hat{\boldsymbol{u}}_h\right) \cdot \mathscr{B}_{\hat{\boldsymbol{x}}} \overline{\boldsymbol{\psi}} \mathrm{~d} s.
	\end{align}
	Substituting (\ref{4.2.2}) into (\ref{4.2.1}) gives
	\begin{align*}
		A\left(\boldsymbol{U}-\hat{\boldsymbol{U}}_{h}, \boldsymbol{V}\right)=&~ L(\boldsymbol{V}-\boldsymbol{V}_h)
		-B_\mathrm{D}\left(\hat{\boldsymbol{U}}_h,\boldsymbol{V}-\boldsymbol{V}_h\right) \\
		&+ \int_{\Gamma_+}\left(\mathscr{T}_+-\mathscr{T}_+^{\mathrm{PML}}\right)\hat{p}^{\mathrm{sc}}_h \overline{\varphi} \mathrm{~d} s
		+\int_{\Gamma_-}\left(\mathscr{T}_s-\mathscr{T}_s^{\mathrm{PML}}\right)\hat{\boldsymbol{u}}_h \cdot \overline{\boldsymbol{\psi}} \mathrm{~d} s,
	\end{align*}
	where we have used $\hat{p}^{\mathrm{sc}}=\hat{p}^{\mathrm{sc}}_h=0$ on $\Gamma_+^{\mathrm{PML}}$ and $\hat{\boldsymbol{u}}=\hat{\boldsymbol{u}}_h=0$ on $\Gamma_-^{\mathrm{PML}}$. Thus the proof is completed.
\end{proof}

\subsection{A posteriori error analysis}
In this subsection, we introduce the definition of a posteriori error indicators. For any $T \in \mathscr{M}_h$, define $h_T$ as its diameter. Let $\partial T$ be the set of all the edges of $T$ that do not lie on $\Gamma_+^{\mathrm{PML}}$ and $\Gamma_-^{\mathrm{PML}}$. For any $e \in \partial T$, define $h_e$ as its length. Let
\begin{align*}
	\mathcal{R}_+ \hat{p}^{\mathrm{sc}}_h,& :=\mathcal{L}_1 \hat{p}^{\mathrm{sc}}_h, \quad \forall~T \in \mathscr{M}_{h, +}, \\
	\mathcal{R}_- \hat{\boldsymbol{u}}_h& :=\mathcal{L}_2 \hat{\boldsymbol{u}}_h, \quad \forall~T \in \mathscr{M}_{h, -}.
\end{align*}
For any $T_1^+, T_2^+ \in \mathscr{M}_{h, +}$, the jump residual $J_{e, +}$ is given by
\begin{align*}
	J_{e, +}:=
	\begin{cases}
		- (\left.\mathbb{A} \nabla \hat{p}^{\mathrm{sc}}_h\right|_{T_1^+} \cdot \boldsymbol{\nu}_1 + \left.\mathbb{A} \nabla \hat{p}^{\mathrm{sc}}_h\right|_{T_2^+} \cdot \boldsymbol{\nu}_2), & e \in \partial {T_1^+} \cap \partial {T_2^+},\\ 2\left(\partial_{\boldsymbol{n}}\left(p^\mathrm{in}+\hat{p}^{\mathrm{sc}}_h\right)-\rho_f \omega^2 \hat{\boldsymbol{u}}_h \cdot \boldsymbol{n}\right), & e \in \Gamma.
	\end{cases}
\end{align*}
where $\boldsymbol{\nu}_j$ is the unit outward normal vector on the boundary of $T_j^+$ ($j=1,2$). For any $T_1^-, T_2^- \in \mathscr{M}_{h, -}$, the jump residual $\boldsymbol{J}_{e, -}$ is given by
\begin{align*}
	\boldsymbol{J}_{e, -}:=
	\begin{cases}
		-\left(\left.\boldsymbol{T} \hat{\boldsymbol{u}}_h\right|_{T_1^-}+\left.\boldsymbol{T} \hat{\boldsymbol{u}}_h\right|_{T_1^-}\right), & e \in \partial {T_1^-} \cap \partial {T_2^-},\\ -2\left(\left(p^\mathrm{in}+\hat{p}^{\mathrm{sc}}_h\right) \boldsymbol{n}+\boldsymbol{T} \hat{\boldsymbol{u}}_h\right), & e \in \Gamma.
	\end{cases}
\end{align*}
Next we consider the jump residual on the left line segment $\Gamma_{\text {left}}$ and right line segment $\Gamma_{\text {right}}$, where
\begin{align*}
	\Gamma_{\text {left}} &=\left\{\left(x_1, x_2\right): x_1 =0, h_2-\delta_2<x_2<h_1+\delta_1\right\},\\
	\Gamma_{\text {right}} &=\left\{\left(x_1, x_2\right): x_1=\Lambda, h_2-\delta_2<x_2<h_1+\delta_1\right\}.
\end{align*}
For any $e \in\Gamma_{\text {left}} \cap \partial T_1$ for some $T_1 \in \mathscr{M}_h$ and its corresponding edge $e^{\prime} \in\Gamma_{\text {right}} \cap \partial T_2$  for some $T_2 \in \mathscr{M}_h$, denote the jump residual by
\begin{align*}
	J_{e, +}&:= - (\left.\mathbb{A} \nabla \hat{p}^{\mathrm{sc}}_h\right|_{T_1} \cdot \boldsymbol{\nu}_1 + e^{-\mathrm{i}\alpha \Lambda} \left.\mathbb{A} \nabla \hat{p}^{\mathrm{sc}}_h\right|_{T_2} \cdot \boldsymbol{\nu}_2),\\
	J_{e^{\prime}, +} &:= - (\left. e^{\mathrm{i}\alpha \Lambda} \mathbb{A} \nabla \hat{p}^{\mathrm{sc}}_h\right|_{T_1} \cdot \boldsymbol{\nu}_1 + \left.\mathbb{A} \nabla \hat{p}^{\mathrm{sc}}_h\right|_{T_2} \cdot \boldsymbol{\nu}_2),
\end{align*}
and
\begin{align*}
	\boldsymbol{J}_{e, -}&:=  -\left(\left.\boldsymbol{T} \hat{\boldsymbol{u}}_h\right|_{T_1}+\left.e^{-\mathrm{i} \alpha \Lambda} \boldsymbol{T} \hat{\boldsymbol{u}}_h\right|_{T_2}\right), \\
	\boldsymbol{J}_{e^{\prime}, -} &:=  -\left(\left.e^{\mathrm{i} \alpha \Lambda} \boldsymbol{T} \hat{\boldsymbol{u}}_h\right|_{T_1}+\left.\boldsymbol{T} \hat{\boldsymbol{u}}_h\right|_{T_2}\right).
\end{align*}
For $T \in \mathscr{M}_{h, +}$ and $T \in \mathscr{M}_{h, -}$, denote the local error estimators by $\eta_{T, +}$ and $\eta_{T, -}$, where
\begin{align}
	& \eta_{T, +}=h_T\left\|\mathcal{R}_+  \hat{p}^{\mathrm{sc}}_h\right\|_{L^2(T)}+\left(\frac{1}{2} \sum_{e \in \partial T} h_e\left\|J_{e, +}\right\|_{L^2(e)}^2\right)^{1 / 2}, \label{eta1}\\
	& \eta_{T, -}=h_T\left\|\mathcal{R}_- \hat{\boldsymbol{u}}_h\right\|_{L^2(T)^2}+\left(\frac{1}{2} \sum_{e \in \partial T} h_e\left\|\boldsymbol{J}_{e, -}\right\|_{L^2(e)^2}^2\right)^{1 / 2}.\label{eta2}
\end{align}

Now we are ready to give the main result.
\begin{theorem}\label{theorem main}
	There exists a positive constant $C$ such that the following a posteriori error estimate holds
	\begin{align*}
		\|\boldsymbol{U}-\hat{\boldsymbol{U}}_h\|_{\mathscr{H}^1_\mathrm{qp}(\Omega)}
		\leq&~ \max\left\{C(1+\gamma_1C_1),C(1+\gamma_1C_2)\right\}\left(\sum_{T \in \mathscr{M}_{h, +}} \eta_{T, +}^2+\sum_{T \in \mathscr{M}_{h, -}} \eta_{T, -}^2\right)^{1 / 2}\\
		&+\gamma_1 F_1\|\hat{p}^{\mathrm{sc}}_h \|_{L^2(\Gamma_+)} + \gamma_1 F_2\|\hat{\boldsymbol{u}}_h \|_{L^2(\Gamma_-)^2},
	\end{align*}
	where $\gamma_1$, $F_j$ and  $C_j$ $(j=1,2)$  are constants defined in Lemmas \ref{trace}, \ref{PML-DtNerror} and \ref{lemma6}, respectively.
\end{theorem}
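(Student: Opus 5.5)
The estimate is built on the error representation of Lemma \ref{errorformula}. The left-hand side norm is understood, as in (\ref{4.7}), as the supremum of $|A(\boldsymbol{U}-\hat{\boldsymbol{U}}_h,\boldsymbol{V})|/\|\boldsymbol{V}\|_{\mathscr{H}^1_\mathrm{qp}(\Omega)}$ over $\boldsymbol{V}\in\mathscr{H}^1_\mathrm{qp}(\Omega)$. So it suffices to bound each term of (\ref{lemm6}) by the right-hand side times $\|\boldsymbol{V}\|_{\mathscr{H}^1_\mathrm{qp}(\Omega)}$, for a suitably chosen $\boldsymbol{V}_h$.

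\textbf{Step 1: the PML boundary terms.} The last two integrals in (\ref{lemm6}) are handled exactly as in the proof of Theorem \ref{theorem1}. Lemma \ref{PML-DtNerror} together with the trace inequality of Lemma \ref{trace} bounds them by
\[
\gamma_1F_1\|\hat p^{\mathrm{sc}}_h\|_{L^2(\Gamma_+)}\|\varphi\|_{H^1(\Omega_+)}+\gamma_1F_2\|\hat{\boldsymbol u}_h\|_{L^2(\Gamma_-)^2}\|\boldsymbol\psi\|_{H^1(\Omega_-)^2}.
\]
This gives the last two terms of the estimate.

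\textbf{Step 2: the residual terms.} The remaining part is $L(\boldsymbol V-\boldsymbol V_h)-B_\mathrm{D}(\hat{\boldsymbol U}_h,\boldsymbol V-\boldsymbol V_h)$, where $\boldsymbol V$ denotes the extension $\tilde{\boldsymbol V}\in\mathscr{H}^1_{0,\mathrm{qp}}(\mathrm D)$.

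\begin{itemize}
\item Choose $\boldsymbol V_h=\Pi_h\tilde{\boldsymbol V}$, a Scott--Zhang type interpolant. It should preserve the homogeneous Dirichlet condition on $\Gamma_\pm^{\mathrm{PML}}$ and the quasi-periodicity; the latter is obtained by pairing nodes on $\Gamma_{\text{left}}$ and $\Gamma_{\text{right}}$ with the factor $e^{\mathrm{i}\alpha\Lambda}$.
\item Integrate $B_\mathrm{D}(\hat{\boldsymbol U}_h,\boldsymbol V-\boldsymbol V_h)$ by parts elementwise, using Green's formula on $\mathscr{M}_{h,+}$ and Betti's formula on $\mathscr{M}_{h,-}$.
\item This produces volume terms $-\int_T\mathcal R_\pm(\cdot)\,\overline{(\cdot-\cdot_h)}$ and interior edge jumps. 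The left/right periodic edges are paired with the factors $e^{\pm\mathrm i\alpha\Lambda}$, matching the definition of $J_{e,\pm}$ there.
\item The interface integrals from $\widetilde B_1$ and $L$ combine on $\Gamma$ into the residuals $\partial_{\boldsymbol n}(p^{\mathrm{in}}+\hat p^{\mathrm{sc}}_h)-\rho_f\omega^2\hat{\boldsymbol u}_h\cdot\boldsymbol n$ and $-(p^{\mathrm{in}}+\hat p^{\mathrm{sc}}_h)\boldsymbol n-\boldsymbol T\hat{\boldsymbol u}_h$. These are precisely $\tfrac12J_{e,\pm}$ for $e\subset\Gamma$, and the factor $\tfrac12$ in (\ref{eta1})--(\ref{eta2}) accounts for this.
\item The boundary terms on $\Gamma_\pm^{\mathrm{PML}}$ vanish since $\boldsymbol V-\boldsymbol V_h=0$ there.
\end{itemize}

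Cauchy--Schwarz and the local interpolation estimates $\|v-\Pi_hv\|_{L^2(T)}\le Ch_T\|\nabla v\|_{L^2(\tilde T)}$ and $\|v-\Pi_hv\|_{L^2(e)}\le Ch_e^{1/2}\|\nabla v\|_{L^2(\tilde e)}$, combined with finite overlap of patches, then give
\[
|L(\boldsymbol V-\boldsymbol V_h)-B_\mathrm{D}(\hat{\boldsymbol U}_h,\boldsymbol V-\boldsymbol V_h)|\le C\Big(\sum\eta_{T,+}^2+\sum\eta_{T,-}^2\Big)^{1/2}\big(\|\nabla\tilde\varphi\|_{L^2(\mathrm D_+)}+\|\nabla\tilde{\boldsymbol\psi}\|_{F(\mathrm D_-)}\big).
\]

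\textbf{Step 3: controlling the extension.} Split the norms of the extension into their parts on $\Omega_\pm$ and on the PML layers. On $\Omega_\pm$ the extension equals $\boldsymbol V$. On the layers, Lemma \ref{lemma6} gives $\gamma_1C_1\|\varphi\|_{H^1(\Omega_+)}$ and $\gamma_1C_2\|\boldsymbol\psi\|_{H^1(\Omega_-)^2}$. The equivalence of $\|\cdot\|_{\mathscr{H}^1_\mathrm{qp}(\Omega)}$ with the $H^1$ norm (Korn) absorbs any remaining constants into $C$. This yields the factor $\max\{C(1+\gamma_1C_1),C(1+\gamma_1C_2)\}$. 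Dividing by $\|\boldsymbol V\|$ and taking the supremum completes the proof.

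\textbf{Main obstacle.} The delicate part is Step 2. Two things must be checked there: that the elementwise integration by parts with the anisotropic PML coefficients $\mathbb A$ and $\mathcal S_{\lambda,\mu}$ produces exactly the flux and traction jumps as defined, and that the quasi-periodic interpolant yields the phase-shifted jumps on $\Gamma_{\text{left}}\cup\Gamma_{\text{right}}$. For the latter I would first establish that $\Pi_h$ commutes with the quasi-periodic identification, by defining it on the left boundary and transporting it to the right.
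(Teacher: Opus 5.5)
Your proposal is correct and follows the paper's proof essentially step for step: you split (\ref{lemm6}) into the residual part and the PML boundary part, bound the latter with Lemmas \ref{trace} and \ref{PML-DtNerror}, apply Scott--Zhang interpolation to the extended test function and integrate by parts elementwise to produce $\mathcal{R}_\pm$ and the jumps $J_{e,\pm}$, and control the extension on the layers with Lemma \ref{lemma6} to obtain the factors $C(1+\gamma_1 C_j)$. The only difference is the closing step: you read the left-hand side as the dual norm of (\ref{4.7}), so you need no inf-sup argument, whereas the paper finishes by invoking (\ref{dtn inf-sup}); strictly speaking, that would introduce a factor $\gamma_0^{-1}$, which the stated bound does not show.
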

\begin{proof}
	For brevity, we can rewrite the formula (\ref{lemm6}) as
	\begin{align}\label{4.3.1}
		A\left(\boldsymbol{U}-\hat{\boldsymbol{U}}_{h}, \boldsymbol{V}\right)
		=&~ {J}_1 + {J}_2,
	\end{align}
	where
	\begin{align*}
		{J}_1 &=~ L(\boldsymbol{V}-\boldsymbol{V}_h)
		-B_\mathrm{D}\left(\hat{\boldsymbol{U}}_h,\boldsymbol{V}-\boldsymbol{V}_h\right),\\
		{J}_2 & =~ \int_{\Gamma_+}\left(\mathscr{T}_+-\mathscr{T}_+^{\mathrm{PML}}\right)\hat{p}^{\mathrm{sc}}_h \overline{\varphi} \mathrm{~d} s
		+\int_{\Gamma_-}\left(\mathscr{T}_s-\mathscr{T}_s^{\mathrm{PML}}\right)\hat{\boldsymbol{u}}_h \cdot \overline{\boldsymbol{\psi}} \mathrm{~d} s.
	\end{align*}
	Take $\varphi_h=\Pi_h^+ \varphi \in H_{0, \mathrm{qp}}^1\left(\mathrm{D}_+\right)$ and $\boldsymbol{\psi}_h=\Pi_h^-  \boldsymbol{\psi} \in H_{0, \mathrm{qp}}^1\left(\mathrm{D}_-\right)^2$, where $\Pi_h^+$ and $\Pi_h^-$ are Scott-Zhang interpolation operators (cf. \cite{Scott90}) satisfying the following estimates
	\begin{align}
		\left\|\varphi-\Pi_h^+ \varphi\right\|_{L^2(T)} \leq C h_T\|\nabla \varphi\|_{L^2(\tilde{T})},& \quad\left\|\varphi-\Pi_h^+ \varphi\right\|_{L^2(e)} \leq C h_e^{1 / 2}\|\nabla \varphi\|_{L^2(\tilde{e})},\label{inter1}\\
		\left\|\boldsymbol{\psi}-\Pi_h^- \boldsymbol{\psi}\right\|_{L^2(T)^2} \leq C h_T\|\nabla \boldsymbol{\psi}\|_{F(\tilde{T})},& \quad\left\|\boldsymbol{\psi}-\Pi_h^- \boldsymbol{\psi}\right\|_{L^2(e)^2} \leq C h_e^{1 / 2}\|\nabla \boldsymbol{\psi}\|_{F(\tilde{e})}.\label{inter2}
	\end{align}
	Here, $\tilde{T}$ and $\tilde{e}$ are the union of all elements in $\mathscr{M}_{h,+} \cup \mathscr{M}_{h, -}$, which have nonempty intersection with $T$ and  $e$, respectively. By the definition of $B_\mathrm{D}(\cdot, \cdot)$ and $L(\cdot)$ in (\ref{trun APPLvarproblem}), one has
	\begin{align*}
		{J}_1 &= \sum_{T \in \mathscr{M}_{h, +}}  \left(\int_T\left(-\mathbb{A}\nabla  \hat{p}^{\mathrm{sc}}_h \cdot \nabla\left(\overline{\varphi}-\Pi_h^+\overline{\varphi}\right)+\kappa^2 s(x_2) \hat{p}^{\mathrm{sc}}_h\left(\overline{\varphi}-\Pi_h^+\overline{\varphi} \right)\right) \mathrm{~d} \boldsymbol{x}\right. \\
		& \quad\qquad\qquad \left.+\sum_{e \in \partial T \cap \Gamma} \int_e\left(\partial_{\boldsymbol{n}} p^\mathrm{in}-\rho_f \omega^2 \hat{\boldsymbol{u}}_h \cdot \boldsymbol{n}\right)\left(\overline{\varphi}-\Pi_h^+\overline{\varphi}\right) \mathrm{~d} s\right) \\
		& \quad + \sum_{T \in \mathscr{M}_{h, -}}
		\left(\int_T-\mathcal{S}_{\lambda,\mu}\left(\hat{\boldsymbol{u}}_h, \boldsymbol{\psi}-\Pi_h^-\boldsymbol{\psi}\right)+ \omega^2 s(x_2) \rho  \hat{\boldsymbol{u}}_h
		\cdot\left(\overline{\boldsymbol{\psi}}-\Pi_h^-\overline{\boldsymbol{\psi}}\right) \mathrm{~d} \boldsymbol{x}\right. \\
		& \quad \qquad\qquad \left.+\sum_{e \in \partial T \cap \Gamma} \int_e-\left(p^\mathrm{in}+ \hat{p}^{\mathrm{sc}}_h\right) \boldsymbol{n} \cdot\left(\overline{\boldsymbol{\psi}}-\Pi_h^-\overline{\boldsymbol{\psi}}\right) \mathrm{~d} s\right).
	\end{align*}
	Applying the integration by parts yields
	\begin{align*}
		{J}_1 &= \sum_{T \in \mathscr{M}_{h, +}}  \left(\int_T \mathcal{L}_1 \hat{p}^{\mathrm{sc}}_h\left(\overline{\varphi}-\Pi_h^+\overline{\varphi} \right) \mathrm{~d} \boldsymbol{x}\right.- \sum_{e \in \partial T}\int_e \mathbb{A} \nabla \hat{p}^{\mathrm{sc}}_h \cdot \boldsymbol{\nu}(\overline{\varphi}-\Pi_h^+\overline{\varphi}) \mathrm{~d} \boldsymbol{x} \\
		& \quad\qquad\qquad \left.+\sum_{e \in \partial T \cap \Gamma}  \int_e\left(\partial_{\boldsymbol{n}} p^\mathrm{in}-\rho_f \omega^2 \hat{\boldsymbol{u}}_h \cdot \boldsymbol{n}\right)\left(\overline{\varphi}-\Pi_h^+\overline{\varphi}\right) \mathrm{~d} s\right) \\
		&\quad + \sum_{T \in \mathscr{M}_{h, -}}\left(\int_T\mathcal{L}_2 \hat{\boldsymbol{u}}_h\cdot\left(\overline{\boldsymbol{\psi}}-\Pi_h^-\overline{\boldsymbol{\psi}}\right) \mathrm{~d} \boldsymbol{x}\right.-\sum_{e \in \partial T}\int_e \boldsymbol{T} \hat{\boldsymbol{u}}_h\left(\overline{\boldsymbol{\psi}}-\Pi_h^-\overline{\boldsymbol{\psi}}\right) \mathrm{~d} \boldsymbol{x} \\
		& \quad\qquad\qquad \left.+\sum_{e \in \partial T \cap \Gamma} \int_e-\left(p^\mathrm{in}+ \hat{p}^{\mathrm{sc}}_h\right) \boldsymbol{n} \cdot\left(\overline{\boldsymbol{\psi}}-\Pi_h^-\overline{\boldsymbol{\psi}}\right) \mathrm{~d} s\right)\\
		& =\sum_{T \in \mathscr{M}_{h, +}}
		\left(\int_T \mathcal{R}_+\hat{p}^{\mathrm{sc}}_h\left(\overline{\varphi}-\Pi_h^+\overline{\varphi} \right) \mathrm{~d} \boldsymbol{x}+\sum_{e \in \partial T} \frac{1}{2} \int_e J_{e, +}\left(\overline{\varphi}-\Pi_h^+\overline{\varphi} \right) \mathrm{~d} s\right) \\
		&\quad +\sum_{T \in \mathscr{M}_{h, -}}\left(\int_T \mathcal{R}_- \hat{\boldsymbol{u}}_h \cdot\left(\overline{\boldsymbol{\psi}}-\Pi_h^-\overline{\boldsymbol{\psi}}\right) \mathrm{~d} \boldsymbol{x}+\sum_{e \in \partial T} \frac{1}{2} \int_e \boldsymbol{J}_{e, -} \cdot\left(\overline{\boldsymbol{\psi}}-\Pi_h^-\overline{\boldsymbol{\psi}}\right) \mathrm{~d} s\right).
	\end{align*}
	It follows from (\ref{inter1}) and (\ref{inter2}) that
	\begin{align*}
		|{J}_1| \leq& \sum_{T \in \mathscr{M}_{h, +}}\left(Ch_T\left\|\mathcal{R}_+\hat{p}^{\mathrm{sc}}_h\right\|_{L^2(T)}\|\nabla \varphi\|_{L^2(\tilde{T})}+\sum_{e \in \partial T} \frac{1}{2} Ch_e^{1 / 2}\left\|J_{e, +}\right\|_{L^2(e)}\|\nabla \varphi\|_{L^2(\tilde{e})}\right) \\
		&+  \sum_{T \in \mathscr{M}_{h, -}}\left(Ch_T\left\|\mathcal{R}_- \hat{\boldsymbol{u}}_h\right\|_{L^2(T)^2}\|\nabla \boldsymbol{\psi}\|_{F(\tilde{T})}+\sum_{e \in \partial T} \frac{1}{2} C h_e^{1 / 2}\left\|\boldsymbol{J}_{e,-}\right\|_{L^2(e)^2}\|\nabla \boldsymbol{\psi}\|_{F(\tilde{e})}\right)\\
		\leq&~C\sum_{T \in \mathscr{M}_{h,+}} \left(h_T\left\|\mathcal{R}_+ \hat{p}^{\mathrm{sc}}_h\right\|_{L^2(T)}+\left(\frac{1}{2} \sum_{e \in \partial T} h_e\left\|J_{e, +}\right\|_{L^2(e)}^2\right)^{1 / 2}\right)\|\nabla \varphi\|_{L^2(\tilde{T})}\\
		& +~ C\sum_{T \in \mathscr{M}_{h, -}} \left(h_T\left\|\mathcal{R}_- \hat{\boldsymbol{u}}_h\right\|_{L^2(T)^2}+\left(\frac{1}{2} \sum_{e \in \partial T} h_e\left\|\boldsymbol{J}_{e, -}\right\|_{L^2(e)^2}^2\right)^{1 / 2}\right)\|\nabla \boldsymbol{\psi}\|_{F(\tilde{T})} \\
		\leq&~ C\sum_{T \in \mathscr{M}_{h,+}} \eta_{T,+}\|\nabla \varphi\|_{L^2(\tilde{T})} 
		+ C\sum_{T \in \mathscr{M}_{h,-}} \eta_{T,-}\|\nabla \boldsymbol{\psi}\|_{F(\tilde{T})}.
	\end{align*}
	Using Lemma \ref{lemma6} and the Cauchy-Schwarz inequality gives
	\begin{align*}
		|{J}_1|
		\leq&~ C(1+\gamma_1C_1)\left(\sum_{T \in \mathscr{M}_{h, +}} \eta_{T, +}^2\right)^{1 / 2}\|\varphi\|_{H^1\left(\Omega_+\right)}
		+  C(1+\gamma_1C_2)\left(\sum_{T \in \mathscr{M}_{h, -}} \eta_{T, -}^2\right)^{1 / 2}\|\boldsymbol{\psi}\|_{H^1\left(\Omega_-\right)^2}\\
		\leq&~ \max\left\{C(1+\gamma_1C_1),C(1+\gamma_1C_2)\right\}\left(\sum_{T \in \mathscr{M}_{h, +}} \eta_{T, +}^2+\sum_{T \in \mathscr{M}_{h, -}} \eta_{T, -}^2\right)^{1 / 2}\|\boldsymbol{V}\|_{\mathscr{H}^1_\mathrm{qp}(\Omega)}
	\end{align*}
	By Lemmas \ref{trace} and \ref{PML-DtNerror}, we have
	\begin{align*}
		|J_2| &\leq F_1\|\hat{p}^{\mathrm{sc}}_h \|_{L^2(\Gamma_+)}\|\varphi\|_{L^2(\Gamma_+)} + F_2\|\hat{\boldsymbol{u}}_h \|_{L^2(\Gamma_-)^2}\|\boldsymbol{\psi}\|_{L^2(\Gamma_-)^2} \\
		& \leq \gamma_1 F_1\|\hat{p}^{\mathrm{sc}}_h \|_{L^2(\Gamma_+)}\|\varphi\|_{H^1(\Omega_+)} + \gamma_1 F_2\|\hat{\boldsymbol{u}}_h \|_{L^2(\Gamma_-)^2}\|\boldsymbol{\psi}\|_{H^1(\Omega_-)^2} \\
		& \leq \left(\gamma_1 F_1\|\hat{p}^{\mathrm{sc}}_h \|_{L^2(\Gamma_+)} + \gamma_1 F_2\|\hat{\boldsymbol{u}}_h \|_{L^2(\Gamma_-)^2}\right)\|\boldsymbol{V}\|_{\mathscr{H}^1_\mathrm{qp}(\Omega)}.
	\end{align*}
	Then the proof is completed by using the above estimates in (\ref{4.3.1}) and the inf-sup condition (\ref{dtn inf-sup}).
\end{proof}

\section{Implementation and Numerical Examples} \label{SECTION5}
This section presents the numerical implementation of the PML-AFEM algorithm by using the MATLAB toolbox, and validates the robustness of the proposed method through several numerical examples.

\subsection{Adaptive algorithm}
Based on Theorem \ref{theorem main}, we can obviously see that the a posteriori error estimate contains two components, i.e., the FE discretization error $\epsilon_{\mathrm{F}}$ and the PML approximation error $\epsilon_{\mathrm{P}}$, where
\begin{align}
	\epsilon_{\mathrm{F}} &=\left(\sum_{T \in \mathscr{M}_{h, +}} \eta_{T, +}^2+\sum_{T \in \mathscr{M}_{h, -}} \eta_{T, -}^2\right)^{1 / 2}, \label{5.1}\\
	\epsilon_{\mathrm{P}} &= F_1\|\hat{p}^{\mathrm{sc}}_h \|_{L^2(\Gamma_+)} +  F_2\|\hat{\boldsymbol{u}}_h \|_{L^2(\Gamma_-)^2}.\label{5.2}
\end{align}
According to (\ref{5.2}), we can select suitable PML parameters $\delta_j$ and $\sigma_j$ satisfying $F_1 \Lambda^{1 / 2} \leq 10^{-8}$ and $F_2 \Lambda^{1 / 2} \leq 10^{-8}$ to significantly reduce the PML error. In our numerical experiments, we always take $m=2$ and $\delta_1=\delta_2=\delta$. Algorithm \ref{algorithm} outlines the implementation steps of the adaptive $P_1$-FEM algorithm.

\begin{algorithm} 
	\caption{\enskip Adaptive PML-FEM algorithm for  the acoustic-elastic interaction in periodic structures} 
	\label{algorithm}
	\begin{algorithmic}[1]
		\State Set  the incident angle $\theta$ and the other parameters $\rho, \rho_f, \lambda, \mu, \omega$;
		\State Given the tolerance $\mathrm{tol} > 0$ and the threshold value $\tau \in (0,1)$;
		\State Choose $\delta$ and $\sigma_j$ such that $F_1 \Lambda^{1 / 2} \leq 10^{-8}$ and $F_2 \Lambda^{1 / 2} \leq 10^{-8}$;
		\State Construct an initial triangulation $\mathscr{M}_h$ over $\mathrm{D}$;
		\State Compute the numerical solution $\hat{\boldsymbol{U}}_h$ of the problem (\ref{trun APPLvarproblem}) and calculate error estimators;
		\While{$ \epsilon_\mathrm{F} > \mathrm{tol}$}
		\State Modify the mesh $\mathscr{M}_h$ based on the following strategy:
		\If {$\eta_{T, +}/\eta_{T, -}> \tau \displaystyle \max_{T \in \mathscr{M}_h} \left\{\eta_{T, +}/\eta_{T, -}\right\}$} 
		\State  Refine the element $\hat{T} \in \mathscr{M}_h$;
		\EndIf
		\State  Solve the discrete problem (\ref{trun APPLvarproblem}) on the new mesh;
		\State Compute the corresponding error estimators;
		\EndWhile 
	\end{algorithmic} 
\end{algorithm}
\subsection{Numerical Examples}
This subsection presents four numerical examples to demonstrate the robustness and effectiveness of the proposed adaptive strategy. In the first example, a scattering problem with an analytical solution is used to verify the accuracy of the method. 
The second and third examples are constructed such that the solutions have corner singularities. The last example focuses on a high-frequency case.

\vspace{12pt}
\textbf{Example~1.} In this example, the fluid-solid interface $\Gamma$ is considered as a straight line with $x_2=0$. The parameters are chosen as $\omega=\pi$, $\rho=\rho_f=1$, $\mu=\lambda=1$ and $\theta=\pi / 6$. Take $h_1=1$, $h_2=-1$ and $\Lambda=1$. According to \cite{Hu16}, the analytical solutions can be written as
\begin{align*}
	&p^{\mathrm{sc}}(\boldsymbol{x})  =q_1 e^{\mathrm{i}\left(\alpha x_1+\beta_0 x_2\right)}, \quad \boldsymbol{x} \in \Omega_+, \\
	&\boldsymbol{u}(\boldsymbol{x})  =q_2\left[\begin{array}{c}
		\alpha \\
		-\beta_0^{(1)}
	\end{array}\right] e^{\mathrm{i}\left(\alpha x_1-\beta_0^{(1)} x_2\right)}+q_3\left[\begin{array}{c}
		\beta_0^{(2)} \\
		\alpha
	\end{array}\right] e^{\mathrm{i}\left(\alpha x_1-\beta_0^{(2)} x_2\right)}, \quad \boldsymbol{x} \in \Omega_-,
\end{align*}
where the coefficients $q_j \in \mathbb{C}$ $(j=1,2,3)$ can be obtained by solving the following linear equations
\begin{align*}
	\left[\begin{array}{ccc}
		\mathrm{i} \beta_0 & \omega^2 \rho_f \beta_0^{(1)} & - \omega^2\rho_f \alpha \\
		0 & \mathrm{i} 2  \mu \alpha \beta_0^{(1)} & \mathrm{i}  2 \mu\left(\beta_0^{(2)}\right)^2-\mathrm{i} \mu \kappa_2^2 \\
		1 & \mathrm{i}2 \mu\left(\beta_0^{(1)}\right)^2+\mathrm{i} \lambda \kappa_1^2 & - \mathrm{i} 2 \mu \alpha \beta_0^{(2)}
	\end{array}\right]\left[\begin{array}{l}
		q_1 \\
		q_2 \\
		q_3
	\end{array}\right]=\left[\begin{array}{c}
		\mathrm{i} \beta_0 \\
		0 \\
		-1
	\end{array}\right].
\end{align*}
Fig. \ref{Exam1_solution k=1} presents the real parts of the exact and numerical solutions with $\kappa=1$. Obviously, the numerical solutions agree well with the exact solutions, which demonstrates the effectiveness of our PML-based adaptive FEM algorithm. Let $\boldsymbol{e}_h=\|\boldsymbol{U}-\hat{\boldsymbol{U}}_h\|_{\mathscr{H}^1_\mathrm{qp}(\Omega)}$ be the a priori error. In Fig. \ref{Exam1_error}, we show the $\log \boldsymbol{e}_h-\log \mathrm{DOF}$ and $\log \epsilon_\mathrm{F}-\log \mathrm{DOF}$ curves by utilizing the adaptive DtN-FEM method and the adaptive PML-FEM algorithm, respectively, where $\mathrm{DOF}$ denotes the number of nodes. Obviously, the convergence rates of the a priori and a posteriori errors are both $\mathcal{O}\left(\mathrm{DOF}^{-1 / 2}\right)$. It can be seen from Fig. \ref{Exam1_error} that the error of the DtN-FEM method is smaller than the PML-FEM method at the same degrees of freedom. However, the PML method only needs to impose the Dirichlet boundary conditions, while the DtN method must handle the nonlocal boundary conditions. Thus, the PML method is much simpler than the DtN method in numerical implementation.
\begin{figure}[!htb]
	\centering
	\subfigure[$\Re p^\mathrm{sc}$]{\includegraphics[width=0.32\textwidth]{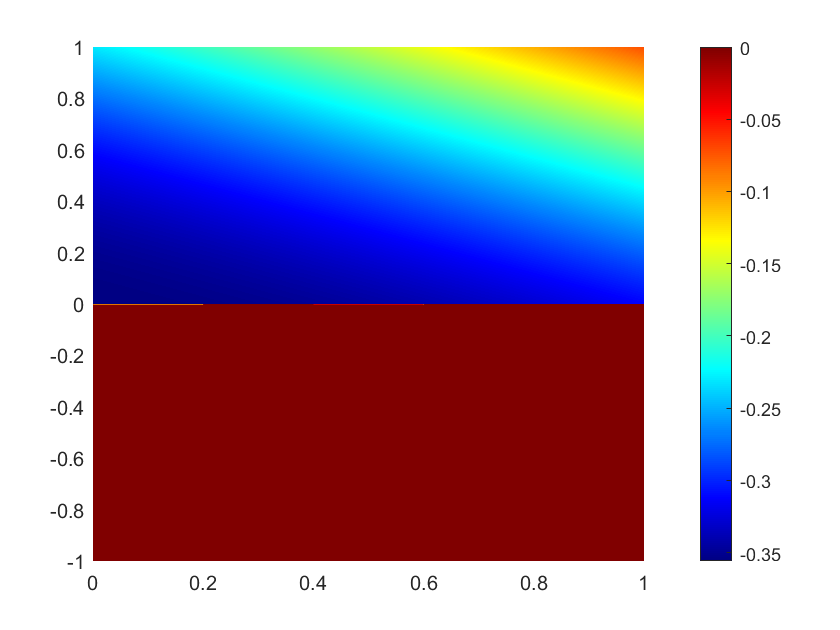}} 
	\subfigure[$\Re u_1$]{\includegraphics[width=0.32\textwidth]{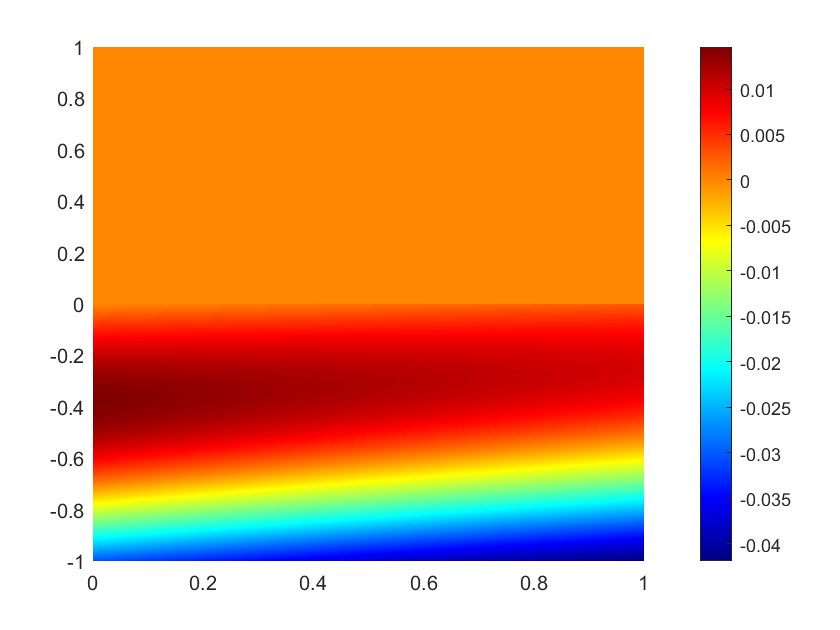}} 
	\subfigure[$\Re u_2$]{\includegraphics[width=0.32\textwidth]{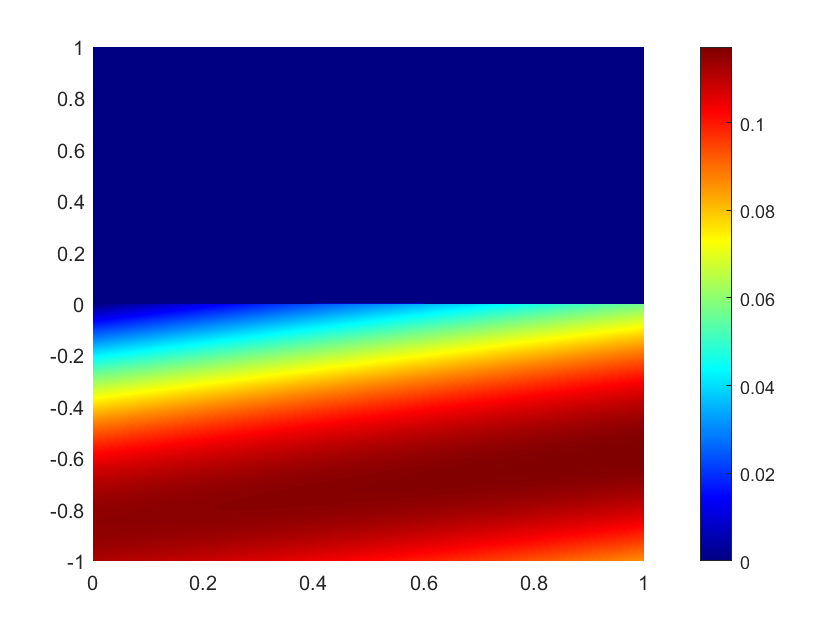}} \\
	\subfigure[$\Re \hat{p}^\mathrm{sc}_h$]{\includegraphics[width=0.32\textwidth]{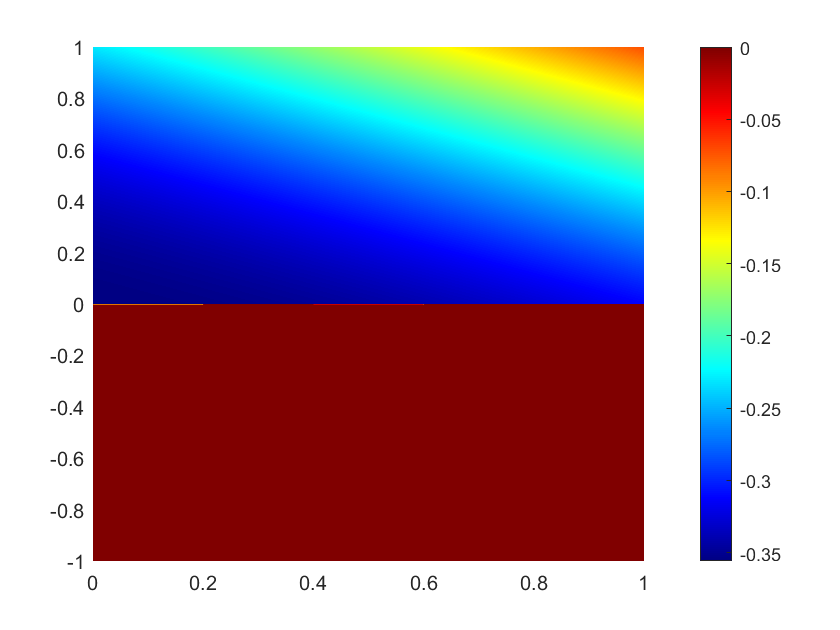}} 
	\subfigure[$\Re \hat{u}_{h,1}$]{\includegraphics[width=0.32\textwidth]{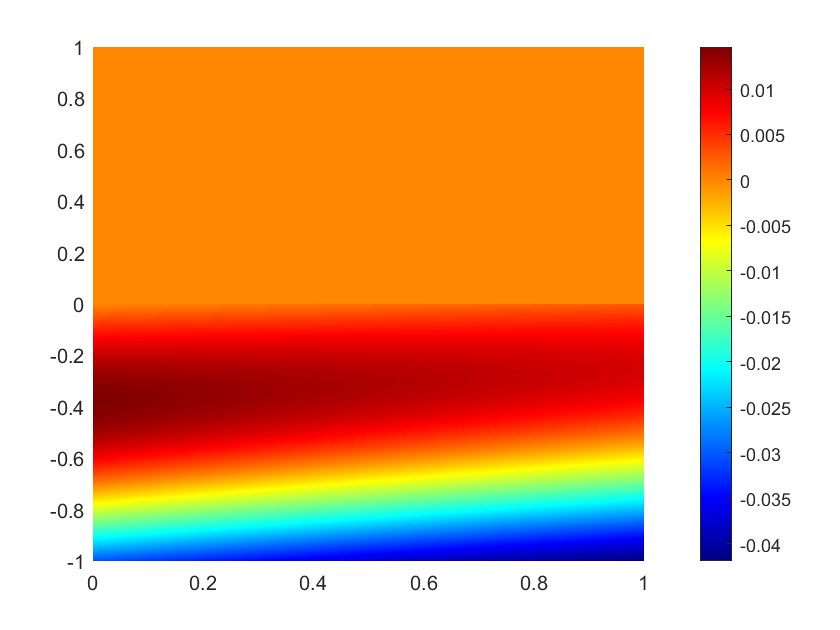}} 
	\subfigure[$\Re \hat{u}_{h,2}$]{\includegraphics[width=0.32\textwidth]{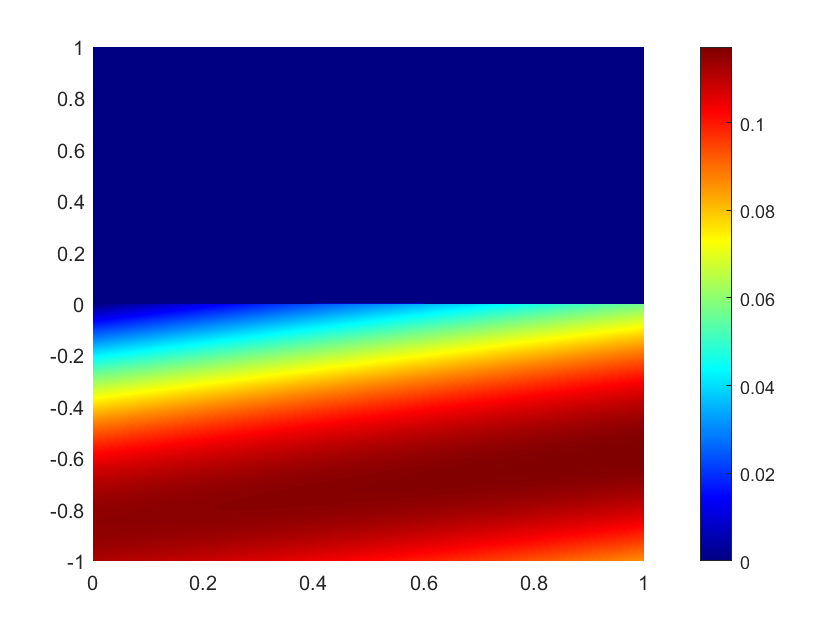}}\\
	\caption{ Example 1. Real parts of the exact solutions (a)-(c) and numerical solutions (d)-(f) with $\kappa=1$.}
	\label{Exam1_solution k=1}
\end{figure}
\begin{figure}[!htb]
	\centering 
	\subfigure[A priori error]{\includegraphics[width=0.49\textwidth]{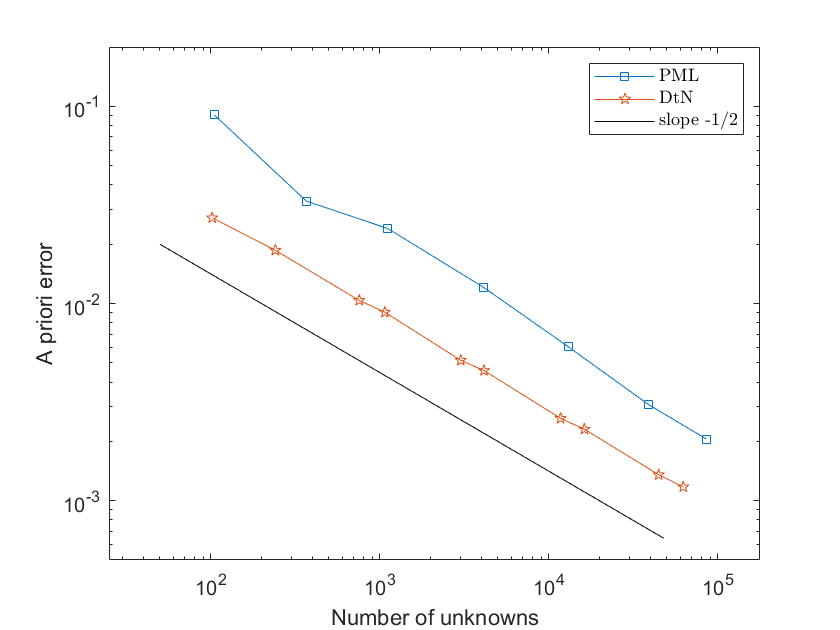}} 
	\subfigure[A posteriori error]{\includegraphics[width=0.49\textwidth]{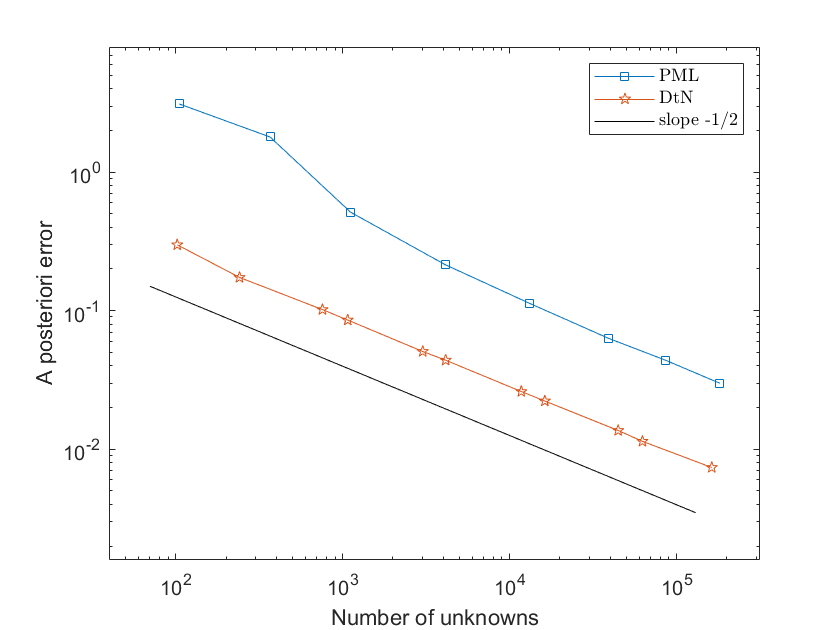}} 
	\caption{Example 1. Log-log curves of the error estimates versus $\mathrm{DOF}$ with $\delta = 1,2,3$: (a) A priori error; (b) A posteriori error.}
	\label{Exam1_error}
\end{figure}
\vspace{12pt}

\textbf{Example~2.} The grating surface is selected to have a sharp angle in this example. Let $\omega=2\pi, \rho=\rho_f=\lambda=1, \mu=2, \theta=\pi / 6$, and choose $h_1=1, h_2=-1$. We plot the numerical solutions in Fig. \ref{Exam2_solution k=1} with $\kappa=1$ and $\delta=3$. Fig.  \ref{Exam2_mesh k=1} shows the initial mesh and the corresponding adaptive mesh around the corner points after five refinement iterations, where the adaptive mesh consists of 887 nodes and 1628 elements. It is clear to show that the algorithm does capture the solution feature and adaptively refines the mesh around the corners, where solution displays singularity. The a posteriori errors and the corresponding $\mathrm{DOF}$ for the uniform and adaptive refinements with $\kappa=1$ and $\delta=3$ are presented in Table \ref{Exam2_table}. Obviously, the adaptive refinement can achieve the superior accuracy with fewer $\mathrm{DOF}$ compared with the uniform refinement, which illustrates the advantage of the adaptive strategy. Fig. \ref{Exam2_error} displays the log-log curves of the a posteriori error estimates against $\mathrm{DOF}$ with different wavenumbers $\kappa$ and PML thicknesses $\delta$, which show that the a posteriori errors are quasi-optimal.  
\begin{figure}[!htb]
	\centering 
	\subfigure[$\Im \hat{p}^\mathrm{sc}_h$]{\includegraphics[width=0.22\textwidth]{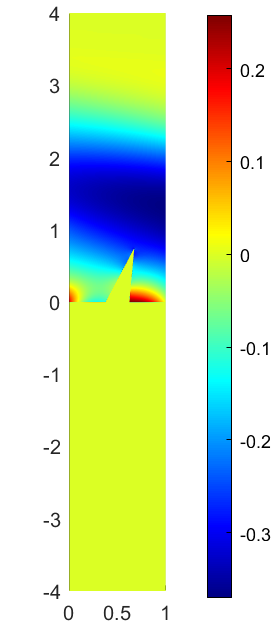}} 
	\hspace{7em} 
	\subfigure[$\Im \hat{u}_{h,1}$]{\includegraphics[width=0.22\textwidth]{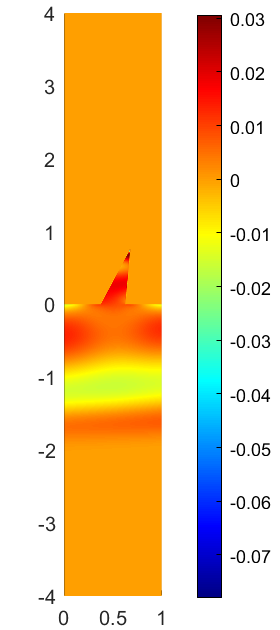}}
	\hspace{7em} 
	\subfigure[$\Im \hat{u}_{h,2}$]{\includegraphics[width=0.22\textwidth]{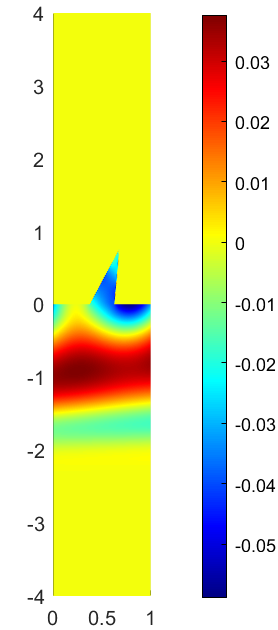}}
	\caption{Example 2. Imaginary parts of the numerical solutions with $\kappa=1$ and $\delta = 3$.}
	\label{Exam2_solution k=1}
\end{figure}
\begin{figure}[!htb]
	\centering 
	\subfigure[Initial mesh]{\includegraphics[width=0.22\textwidth]{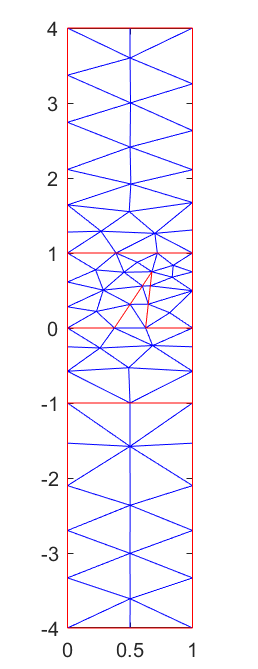}}
	\hspace{7em}
	\subfigure[Adaptive mesh]{\includegraphics[width=0.22\textwidth]{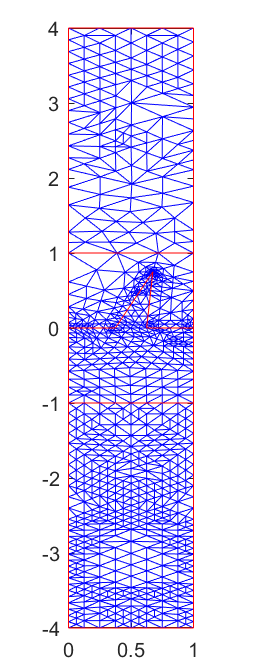}}
	\caption{Example 2. The finite element mesh: (a) Initial mesh with 70 nodes and 100 elements; (b)Adaptive mesh with 887 nodes and 1628 elements.}
	\label{Exam2_mesh k=1}
\end{figure}
\begin{table*}[!htb]
	\centering
	\caption{Example 2. Comparison of the uniform and adaptive refinements in terms of $\mathrm{DOF}$ and the a posteriori error $\epsilon_\mathrm{F}$.}
	\begin{tabular*}{\textwidth}{@{\extracolsep\fill}llllllll@{\extracolsep\fill}}
		\toprule
		\multicolumn{2}{c}{Uniform refinement} & & &      \multicolumn{2}{c}{Adaptive refinement} \\
		\cmidrule(r){1-5} \cmidrule(l){5-8}
		$\mathrm{DOF}$ & \hspace{3em} $\epsilon_\mathrm{F}$  & & &  $\mathrm{DOF}$ & \hspace{3em}$\epsilon_\mathrm{F}$  \\
		\midrule
		70 & \hspace{3em}5.1176 &      & &   70 & \hspace{3em}5.1176 &  \\
		3353 & \hspace{3em}0.5986 &    & &   2842 & \hspace{3em}0.4073 &  \\
		13105 & \hspace{3em}0.2543 &   & &   9786 & \hspace{3em}0.2076 &  \\
		51809 & \hspace{3em}0.1495 &   & &   19700 & \hspace{3em}0.1469 &  \\
		206017 & \hspace{3em}0.1014 &  & &   48264 & \hspace{3em}0.0962 &  \\
		\bottomrule
		\label{Exam2_table}
	\end{tabular*}
\end{table*}
\begin{figure}[!htb]
	\centering 
	\subfigure[A posteriori errors with different $\kappa$ ]{\includegraphics[width=0.49\textwidth]{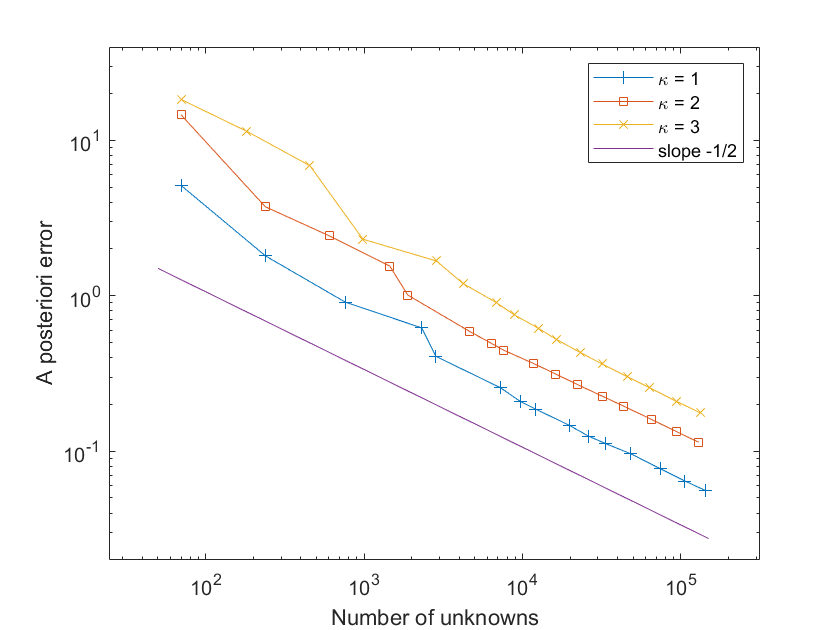}}
	\subfigure[A posteriori errors with different $\delta$ ]{\includegraphics[width=0.49\textwidth]{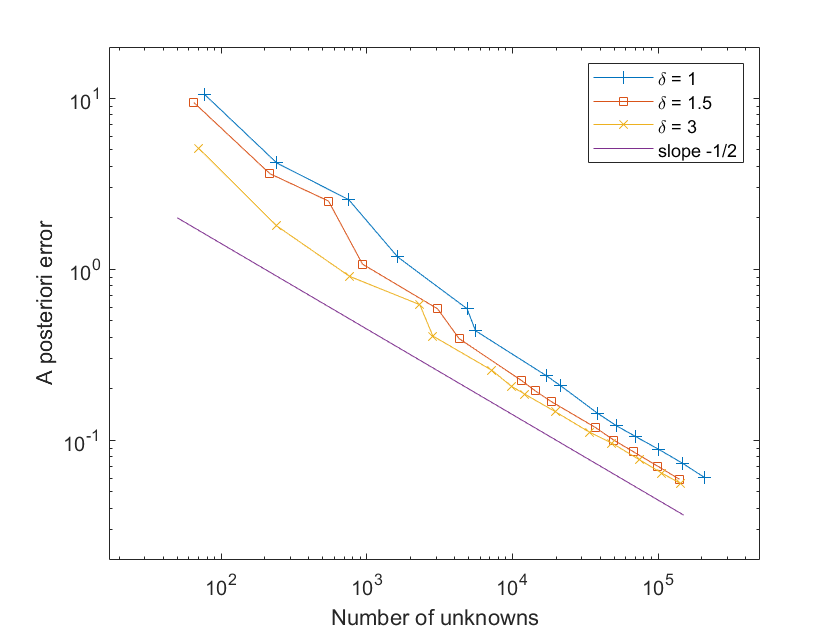}}
	\caption{Example 2. Log-log curves of the a posteriori error estimates versus $\mathrm{DOF}$: (a) For wavenumbers $\kappa=1,2,3$; (b) For PML thicknesses $\delta=1,1.5,3$.}
	\label{Exam2_error}
\end{figure}
\vspace{12pt}

\textbf{Example~3.} In this example, the grating surface is a trapezoidal structure. We choose $h_1=2$, $h_2=-2$ and $\Lambda=4$. The rest of the parameters are the same as those in Example 2. We set $\kappa =1$, $\delta=4$. The real parts of the numerical solutions as well as the corresponding adaptive mesh are displayed in Figs. \ref{Exam3_solution k=1} and \ref{Exam3_mesh k=1}, respectively. A numerical comparison between uniform and adaptive refinements is provided in Table \ref{Exam3_table}. Once again, the algorithm shows the ability
to capture the singularity of the solution and perform local mesh refinements. In Fig. \ref{Exam3_error}, the curves of $\log \epsilon_\mathrm{F}-\log \mathrm{DoF}$ are plotted for different $\kappa$ and $\delta$ to demonstrate the quasi-optimal convergence rates of our proposed method.
\begin{figure}[!htb]
	\centering
	\subfigure[$\Re \hat{p}^\mathrm{sc}_h$]{\includegraphics[width=0.26\textwidth]{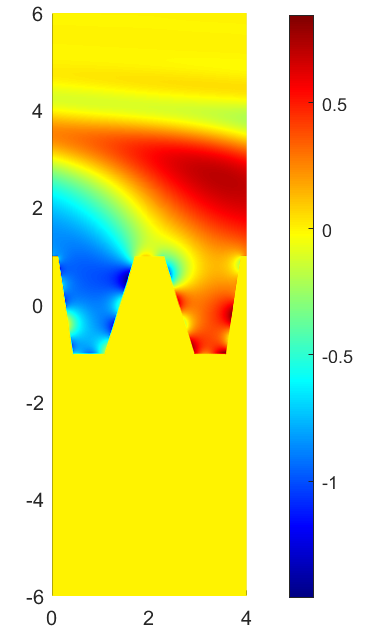}} 
	\hspace{4em} 
	\subfigure[$\Re \hat{u}_{h,1}$]{\includegraphics[width=0.26\textwidth]{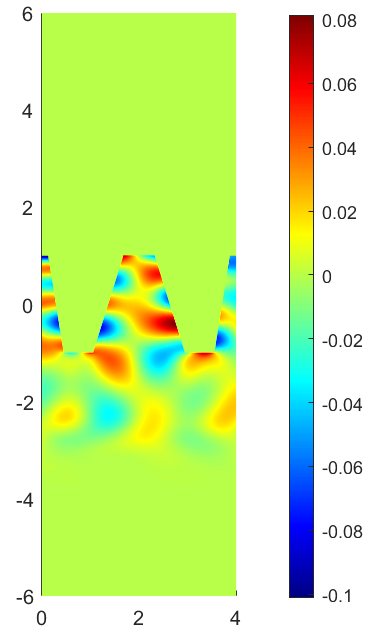}}
	\hspace{4em} 
	\subfigure[$\Re \hat{u}_{h,2}$]{\includegraphics[width=0.26\textwidth]{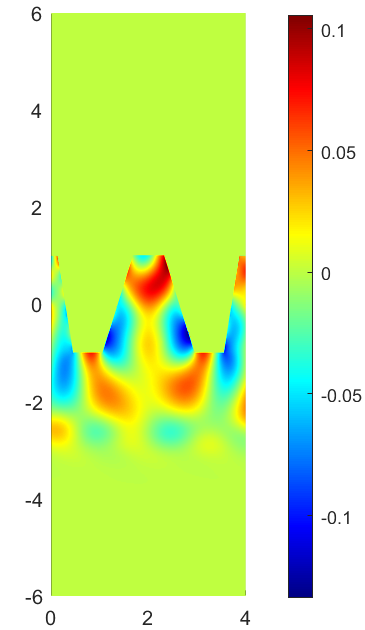}}
	\caption{Example 3. Real parts of the numerical solutions with $\kappa=1$ and $\delta = 4$.}
	\label{Exam3_solution k=1}
\end{figure}
\begin{figure}[!htb]
	\centering 
	\subfigure[Initial mesh]{\includegraphics[width=0.32\textwidth]{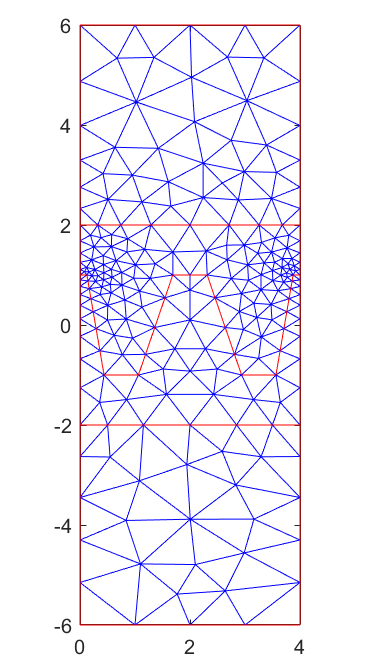}}
	\hspace{6em}
	\subfigure[Adaptive mesh]{\includegraphics[width=0.32\textwidth]{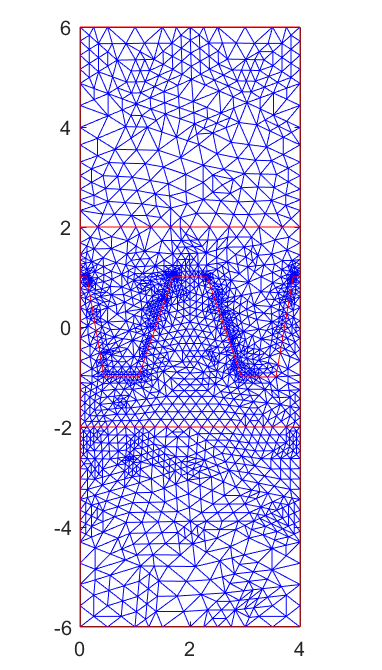}}
	\caption{Example 3. The finite element mesh: (a) Initial mesh with 232 nodes and 404 elements; (b) Adaptive mesh with 1963 nodes and 3793 elements.}
	\label{Exam3_mesh k=1}
\end{figure}
\begin{table*}[!htb]
	\centering
	\caption{Example 3. Comparison of the uniform and adaptive refinements in terms of $\mathrm{DOF}$ and the a posteriori error $\epsilon_\mathrm{F}$.}
	\begin{tabular*}{\textwidth}{@{\extracolsep\fill}llllllll@{\extracolsep\fill}}
		\toprule
		\multicolumn{2}{c}{Uniform refinement} & & &      \multicolumn{2}{c}{Adaptive refinement} \\
		\cmidrule(r){1-5} \cmidrule(l){5-8}
		$\mathrm{DOF}$ & \hspace{3em} $\epsilon_\mathrm{F}$  & & &  $\mathrm{DOF}$ & \hspace{3em}$\epsilon_\mathrm{F}$  \\
		\midrule
		232 & \hspace{2.1em}12.1684 &      & &   232 & \hspace{2.1em}12.1684 &  \\
		867 & \hspace{2.1em}8.1307 &     & &   610 & \hspace{2.1em}5.5736 &  \\
		13161 & \hspace{2.1em}2.4135 &   & &  6455 & \hspace{2.1em}2.3914 &  \\
		52177 & \hspace{2.1em}1.2630 &   & &   27369 & \hspace{2.1em}1.2395 &  \\
		207777 & \hspace{2.1em}0.6804 &    & &   119932 & \hspace{2.1em}0.5919 &  \\
		\bottomrule
		\label{Exam3_table}
	\end{tabular*}
\end{table*}
\begin{figure}[!htb]
	\centering 
	\subfigure[A posteriori errors with different $\kappa$ ]{\includegraphics[width=0.49\textwidth]{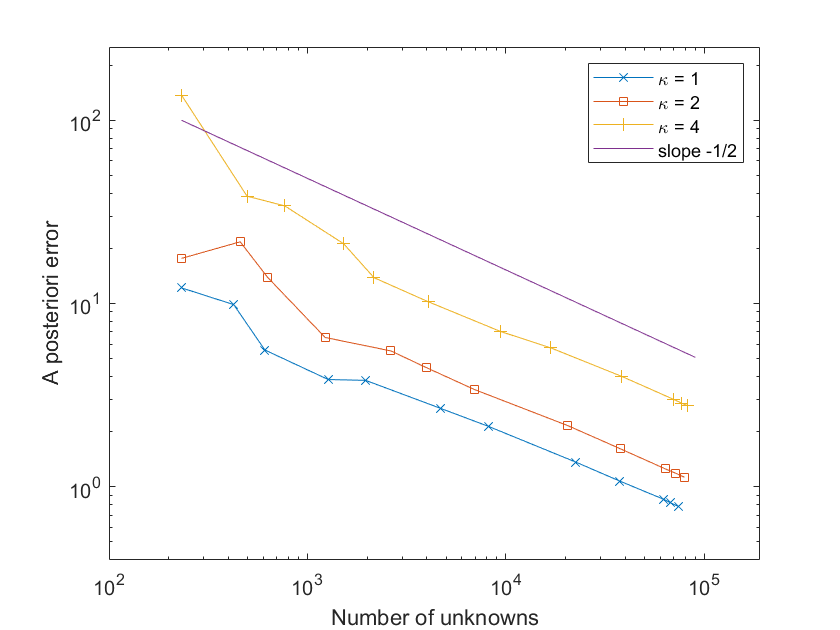}} 
	\subfigure[A posteriori errors with different $\delta$]{\includegraphics[width=0.49\textwidth]{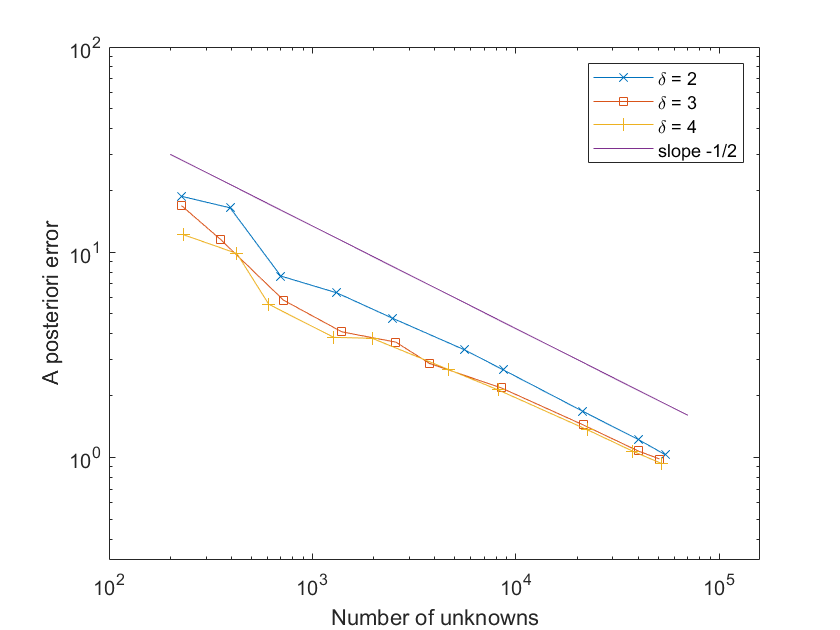}} 
	\caption{Example 3. Log-log curves of the a posteriori error estimates versus $\mathrm{DOF}$: (a) For wavenumbers $\kappa=1,2,4$; (b) For PML thicknesses $\delta=2,3,4$.}
	\label{Exam3_error}
\end{figure}
\vspace{12pt}

\textbf{Example~4.} This example focuses on a high-frequency problem, where the grating surface has two sharp angles. We take the period $\Lambda=2$. The remaining parameters are the same as those in Example 3.
The real parts of numerical solutions with $\kappa=20$ are presented in Fig. \ref{Exam4_solution k=20}. Fig. \ref{Exam4_error} (a) demonstrates the quasi-optimality of the a posteriori error with different $\delta$. Fig. \ref{Exam4_error} (b) displays the a posteriori errors against $\mathrm{DOF}$ with fixed $\delta=4$ and different $\kappa=10,20,30$, which demonstrates that our adaptive PML-FEM algorithm is robust even for high-frequency problems. In addition, numerical results in Table \ref{Exam4_k20_table} further confirm the superiority of the adaptive strategy.  
\begin{figure}[!htb]
	\centering 
	\subfigure[$\Re \hat{p}^\mathrm{sc}_h$]{\includegraphics[width=0.265\textwidth]{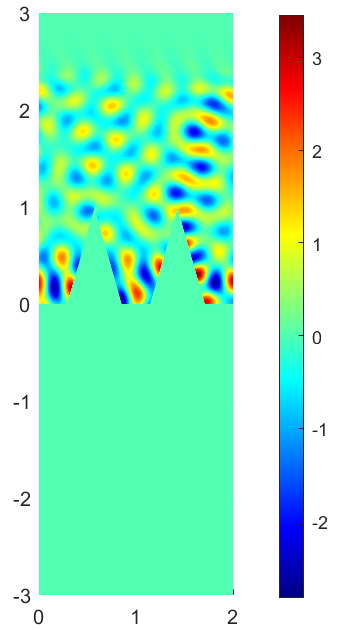}} 
	\hspace{4em} 
	\subfigure[$\Re \hat{u}_{h,1}$]{\includegraphics[width=0.265\textwidth]{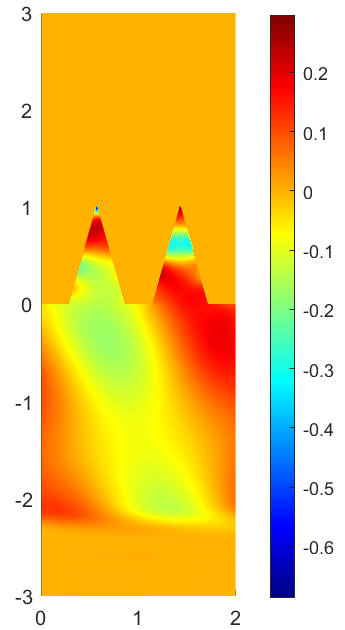}}
	\hspace{4em} 
	\subfigure[$\Re \hat{u}_{h,2}$]{\includegraphics[width=0.265\textwidth]{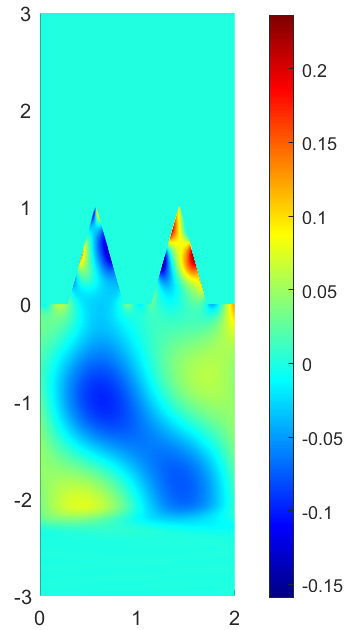}}
	\caption{Example 4. Real parts of the numerical solutions with $\kappa=20$ and $\delta = 1$.}
	\label{Exam4_solution k=20}
\end{figure}
\begin{figure}[h]
	\centering 
	\subfigure[A posteriori errors with different $\delta$]{\includegraphics[width=0.49\textwidth]{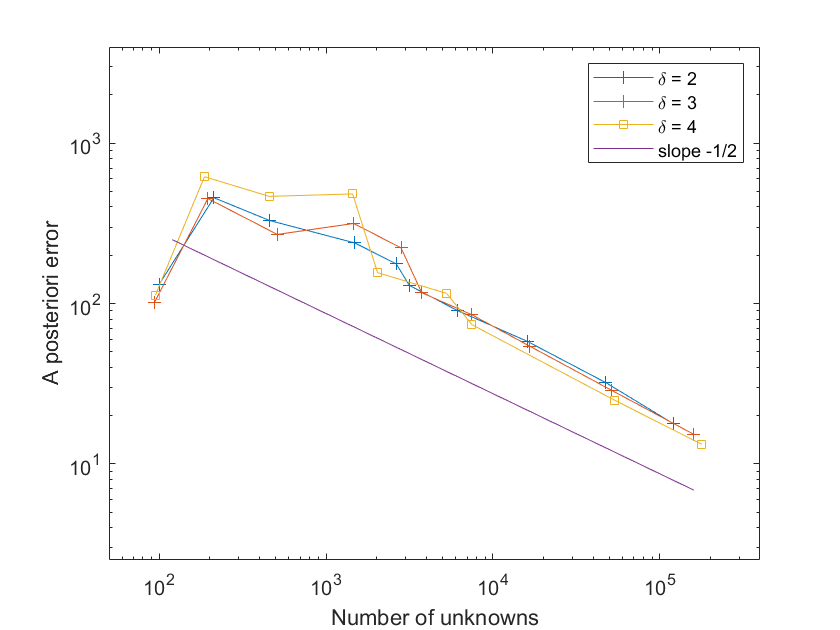}} 
	\subfigure[A posteriori errors with different $\kappa$ ]{\includegraphics[width=0.49\textwidth]{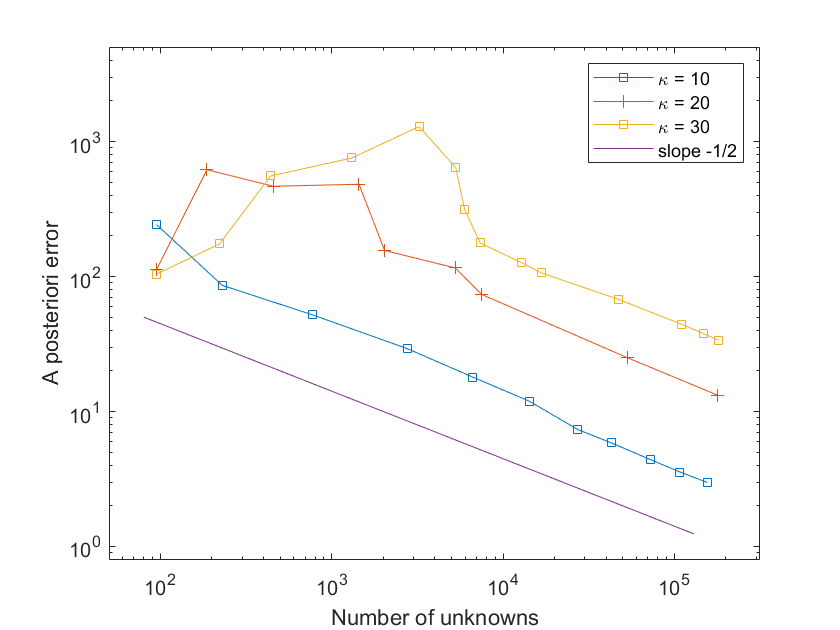}} 
	\caption{Example 4. Log-log curves of the a posteriori error estimates versus $\mathrm{DOF}$: (a) For PML thicknesses $\delta=2,3,4$; (b) For wavenumbers $\kappa=10,20,30$.}
	\label{Exam4_error}
\end{figure}

\begin{table*}[!htb]
	\centering
	\caption{Example 4. Comparison of the uniform and adaptive refinements in terms of $\mathrm{DOF}$ and the a posteriori error $\epsilon_\mathrm{F}$.}
	\begin{tabular*}{\textwidth}{@{\extracolsep\fill}llllllll@{\extracolsep\fill}}
		\toprule
		\multicolumn{2}{c}{Uniform mesh} & & &      \multicolumn{2}{c}{Adaptive mesh} \\
		\cmidrule(r){1-5} \cmidrule(l){5-8}
		$\mathrm{DOF}$ & \hspace{3em} $\epsilon_\mathrm{F}$  & & &  $\mathrm{DOF}$ & \hspace{3em}$\epsilon_\mathrm{F}$  \\
		\midrule
		95 & \hspace{1.8em}112.6802 &      & &   95 & \hspace{1.8em}112.6802 &  \\
		4897 & \hspace{1.8em}136.5494 &     & &   2301 & \hspace{1.8em}98.6562 &  \\
		19265 & \hspace{1.8em}105.3166 &   & &   4342 & \hspace{1.8em}78.3530 &  \\
		76417 & \hspace{1.8em}44.0865 &   & &   21979 & \hspace{1.8em}41.1862 &  \\
		304385 & \hspace{1.8em}21.6798 &  & &  149196 & \hspace{1.8em}14.7741 &  \\
		\bottomrule
		\label{Exam4_k20_table}
	\end{tabular*}
\end{table*}

\section{Conclusion}\label{SECTION6}
We develop a PML-based adaptive FEM for solving the acoustic-elastic interaction problem in periodic structures. 
By introducing two PML-equivalent transparent boundary conditions, we prove the unique solvability of the truncated PML variational formulation and show that the PML truncation error decays exponentially as the PML parameters increase. We further give a residual-type a posteriori error estimate and develop an adaptive FEM algorithm to address singularities of the solution caused by the non-smooth fluid-solid interface. Numerical experiments are presented to validate the accuracy and robustness of the proposed method. Future work will extend the analysis of PML-AFEM to the thermoelastic scattering problem.



\section*{Acknowledgment}
This work of J.L. was partially supported by the National Natural Science Foundation of China (Grant No. 12271209).

\section*{Data availability}
All data generated or analysed during this study are included in this article.

\section*{Declaration of competing interest}
The authors declare that they have no known competing financial interests or personal relationships that could have appeared to influence the work reported in this paper.

\end{document}